\def\namedlabel#1#2{\begingroup
    #2%
    \def\@currentlabel{#2}%
    \phantomsection\label{#1}\endgroup
}
\numberwithin{equation}{section}
\newtheoremstyle{thmlemcorr}{10pt}{10pt}{\itshape}{}{\bfseries}{.}{10pt}{{\thmname{#1}\thmnumber{ #2}\thmnote{ (#3)}}}
\newtheoremstyle{thmlemcorr*}{10pt}{10pt}{\itshape}{}{\bfseries}{.}\newline{{\thmname{#1}\thmnumber{ #2}\thmnote{ (#3)}}}
\newtheoremstyle{remexample}{10pt}{10pt}{}{}{\bfseries}{.}{10pt}{{\thmname{#1}\thmnumber{ #2}\thmnote{ (#3)}}}
\newtheoremstyle{ass}{10pt}{10pt}{}{}{\bfseries}{.}{10pt}{{\thmname{#1}\thmnumber{ A#2}\thmnote{ (#3)}}}
\theoremstyle{thmlemcorr}
\newtheorem{theorem}{Theorem}
\numberwithin{theorem}{section}
\newtheorem{lemma}[theorem]{Lemma}
\newtheorem{corollary}[theorem]{Corollary}
\newtheorem{proposition}[theorem]{Proposition}
\newtheorem{definition}[theorem]{Definition}
\theoremstyle{thmlemcorr*}
\newtheorem{theorem*}{Theorem}
\newtheorem{lemma*}[theorem]{Lemma}
\newtheorem{corollary*}[theorem]{Corollary}
\newtheorem{proposition*}[theorem]{Proposition}
\newtheorem{problem*}[theorem]{Problem}
\newtheorem{conjecture*}[theorem]{Conjecture}
\newtheorem{definition*}[theorem]{Definition}
\newtheorem{assumption*}[theorem]{Assumption}
\theoremstyle{remexample}
\newtheorem{remark}[theorem]{Remark}
\newtheorem*{remark*}{Remark}
\theoremstyle{ass}
\newcommand{\Dcal}{\mathcal{D}}
\newcommand{\T}{\mathbb{T}}
\newcommand{\A}{\mathscr{A}}
\newcommand{\B}{\mathscr{B}}
\newcommand{\D}{\mathscr{D}}
\newcommand{\Newt}{\mathscr{N}}
\newcommand{\Haus}{\mathscr{H}}
\newcommand{\Pow}{\mathscr{P}}
\newcommand{\Mon}{\mathscr{M}}
\DeclareMathOperator{\Lin}{Lin}
\newcommand{\tigma}{\tilde{\sigma}}
\newcommand{\funct}{\mathscr{f}}
\newcommand{\gunct}{\mathscr{g}}
\newcommand{\QA}{\mathscr{Q}_{\mathscr{A}}}
\newcommand{\test}{\mathscr{T}_{\mathscr{A}}}
\newcommand{\topoeq}{\mathscr{T}_{\textup{eq}}}
\newcommand{\topobd}{\mathscr{T}_{\textup{bd}}}
\newcommand{\Ccal}{\mathscr{C}}
\definecolor{Gump}{rgb}{0,0.6,0.4}
\definecolor{Hanks}{rgb}{0.7,0.3,0.1}
\DeclareMathOperator{\id}{id}
\DeclareMathOperator{\diverg}{div}
\DeclareMathOperator{\Div}{div}
\DeclareMathOperator{\curl}{curl}
\DeclareMathOperator{\dist}{dist}
\DeclareMathOperator{\Tr}{tr}
\DeclareMathOperator{\const}{const.}
\DeclareMathOperator{\lin}{lin}
\DeclareMathOperator{\nl}{nl}
\newcommand{\norm}[1]{\|#1\|}
\newcommand{\abs}[1]{|#1|}
\newcommand{\dd}{\;\mathrm{d}}
\newcommand{\N}{\mathbb{N}}
\newcommand{\R}{\mathbb{R}}
\newcommand{\Z}{\mathbb{Z}}
\newcommand{\sym}{\mathrm{sym}}
\newcommand{\skw}{\mathrm{skew}}
\newcommand{\eps}{\epsilon}
\newcommand{\limdata}{\overset{bd}{\longrightarrow}}
\newcommand{\limeq}{\overset{eq}{\longrightarrow}}
\newcommand\bra[1]{\left(#1\right)}
\def\XXint#1#2#3{{\setbox0=\hbox{$#1{#2#3}{\int}$}
\vcenter{\hbox{$#2#3$}}\kern-.5\wd0}}
\definecolor{luh-dark-blue}{rgb}{0, 0.313, 0.608}
\definecolor{luh-light-blue}{rgb}{0.6, 0.725, 0.847}
\definecolor{luh-green}{rgb}{0.784, 0.827, 0.09}
\begin{document}

%% TITLE MATTERS

\title[Data-driven Fluid Mechanics]{A data-driven approach to viscous fluid mechanics -- the stationary case}

 \author{Christina Lienstromberg, Stefan Schiffer$^\ast$, Richard Schubert$^\ast$}
\address{Institute of Analysis, Dynamics and Modeling, University of Stuttgart, Pfaffenwaldring~57, 70569 Stuttgart, Germany}
\email{christina.lienstromberg@iadm.uni-stuttgart.de}
\email{schiffer@iam.uni-bonn.de}\address{$^\ast$ Institute of Applied Mathematics, University of Bonn, Endenicher Allee~60, 53115 Bonn, Germany}
\email{schubert@iam.uni-bonn.de}

\begin{abstract}
We introduce a data-driven approach to the modelling and analysis of viscous fluid mechanics. Instead of including constitutive laws for the fluid's viscosity in the mathematical model, we suggest to directly use experimental data. Only a set of differential constraints, derived from first principles, and boundary conditions are kept of the classical PDE model and are combined with a data set. 
The mathematical framework builds on the recently introduced data-driven approach to solid-mechanics \cite{KO,CMO}.
We construct optimal data-driven solutions that are \emph{material model free} in the sense that no assumptions on the rheological behaviour of the fluid are made or extrapolated from the data.
The differential constraints of fluid mechanics are recast in the language of constant rank differential operators. Adapting abstract results on lower-semicontinuity and $\mathscr{A}$-quasiconvexity, we show a $\Gamma$-convergence result for the functionals arising in the data-driven fluid mechanical problem. The theory is extended to compact nonlinear perturbations, whence our results apply to both inertialess fluids and flows with finite Reynolds number.  
Data-driven solutions provide a new \emph{relaxed} solution concept.
We prove that the constructed data-driven solutions are consistent with solutions to the classical PDEs of fluid mechanics if the data sets have the form of a monotone constitutive relation.
 
\end{abstract}

%% PDF MATTERS

%% START OF CONTENT

\maketitle
\bigskip

\noindent\textsc{MSC (2010): 76A05, 76D05, 35A15,49J45}

\noindent\textsc{Keywords: Non-Newtonian Fluids, Navier--Stokes equations, Data-Driven problems, $\mathscr{A}$-Quasiconvexity, Convex Sets, $\Gamma$-Convergence.}
\bigskip

\noindent\textsc{Acknowledgement. } We thank Michael Ortiz for insightful discussions. Moreover, we acknowledge support by the Hausdorff Center for Mathematics (GZ 2047/1, Project-ID 390685813). C.~L. has been supported by the Deutsche Forschungsgemeinschaft (DFG, German Research Foundation) through the collaborative research centre `The mathematics of emergent effects' (CRC 1060, Project-ID  211504053).

%------------------------------------------------
%------------------------------------------------

% \section*{ToDo}
% \begin{itemize}
%     \item Citations for non-Newt. Navier--Stokes
% \end{itemize}

% \section*{Conventions}
% \begin{itemize}
%     \item British English! Minimiser, minimising, metrisable
%     \item Abbildungen: $B(X;Y)$ (Semikolon), Zielraum $\R$ wird nicht geschrieben
%     \item Torus $\T_d$
%     \item $1<p,q<\infty$
%     \item Folgen $v_{1,n}$ Index immer unten
%     \item Argumente der Raeume werden nur in Normen weggelassen
%     \item Wir schreiben $v_n\subset X$ fuer Folgen
%     \item In Definitionen neue Woerter fett
%     \item Dynamische Klammern
%     \item Langes Minus bei zwei Namen (Navier--Stokes)
%     \item $\sigma = -\pi \id + \mu$ and Stokes andersrum
%     \item In cases environments: no comma at end of line, comma between equ and condition, no words
%     \item Which words are emphasised?!
% \end{itemize}

\section{Introduction}

In this article, a new approach to the modelling and analysis of viscous fluid mechanics is introduced. The hydrostatic behaviour of an incompressible fluid at any instant $t$ in time may be described by its velocity field $u\colon x\mapsto u(x)\in\R^d$ which induces a strain(-rate) $\epsilon\colon x\mapsto \epsilon(x)\in \R^{d\times d}_{\sym}$
\begin{align}\label{eq:strain}
  \epsilon=\frac 12\left(\nabla u+\nabla u^T\right),  
\end{align} 
the symmetric gradient of the velocity field. Moreover the fluid generates a stress field $\sigma\colon x\mapsto \sigma(x)\in \R^{d\times d}_{\sym}$ which, in the case of an inertialess fluid, satisfies 
\begin{align}\label{eq:stress}
    -\diverg \sigma=f,    
\end{align} with an external force density $f\colon x\mapsto f(x)\in \R^d$. Both \eqref{eq:strain} and \eqref{eq:stress} are prescribed \emph{differential constraints} and are also called \emph{compatibility conditions}. The strain $\epsilon$ and the stress $\sigma$ cannot be \emph{any} field -- they have to be a symmetric gradient of another field in the first, and admit a predefined divergence in the second case. For fluids with finite Reynolds number the force balance \eqref{eq:stress} has to be complemented by the inertial forces proportional to $\partial_t u+(u\cdot \nabla)u$. This results (after suitable non-dimensionalisation) in the equation
\begin{align*}
    \partial_t u+(u\cdot \nabla)u-\diverg \sigma=f.
\end{align*}
However, in this paper we restrict our analysis to the stationary case $\partial_t u = 0$, i.e. we study the problem
\begin{equation*}
    (u\cdot \nabla)u-\diverg \sigma
    =
    f.
\end{equation*}
Since our analysis is mainly based on variational arguments suited for stationary problems, we postpone the time-dependent case to a separate work.

%------------------------------------
%------------------------------------
\bigskip
\subsection{The PDE-Based Approach -- Constitutive Laws for Viscous Fluids.}

Hitherto, the modelling and analysis of a rich set of phenomena in viscous fluid mechanics relies on \textit{constitutive laws} describing the relation between the strain field $\eps$ and the stress field $\sigma$. A commonly used relation is 
\begin{equation*}
    \sigma
    =
    -\pi\id 
    +
    2\mu(\abs{\epsilon}) \epsilon,
\end{equation*}
which relies on the assumption that the stress comprises two components -- the hydrostatic stress $\pi \id$ and the viscous stress $2\mu(\abs{\epsilon})\epsilon$. Here, $\mu\colon s\mapsto \mu(s)\in \R_+$ denotes the viscosity of the fluid. It depends on the strain rate and measures the resistance of the fluid to deformation. Mathematically, the hydrostatic pressure $\pi\colon x\mapsto \pi(x)\in\R$ is the Lagrange multiplier corresponding to the incompressibility condition $\Div u=0$. In the simplest model of a viscous fluid, the viscosity $\mu$ is assumed to be constant $\mu\equiv\const$ and the corresponding fluid is called \textit{Newtonian}. In other words, the relation between the viscous forces and the local strain rate is perfectly linear, the constant viscosity being the factor of proportionality. In the case of an \emph{inertialess} incompressible Newtonian fluid one obtains the well-known Stokes equations
\begin{equation} \label{intro:Stokes}
    \begin{cases}
        -\mu\Delta u+\nabla \pi = f &
         \\
        \Div u = 0. &
    \end{cases}
\end{equation}
For incompressible Newtonian fluids \emph{with inertia}  one obtains the stationary Navier--Stokes equations
\begin{equation} \label{intro:Navier-Stokes}
    \begin{cases}
        (u\cdot \nabla)u - \mu\Delta u+\nabla \pi = f &
         \\
        \Div u = 0. &
    \end{cases}
\end{equation}
Although it is reasonable in many practical applications to assume a fluid being Newtonian, real fluids that account for viscosity are in fact non-Newtonian, i.e. they feature a nonlinear relation between the stresses $\sigma$ and the rate of strain $\eps$. A widely-used constitutive relation is given by 
\begin{equation} \label{eq:power-law}
    \mu(|\epsilon|)= \mu_0 \abs{\epsilon}^{\alpha-1}, \quad \alpha>0,
\end{equation}
and the corresponding fluid's are called \textit{power-law fluids} or \textit{Ostwald--de Waele fluids}. The exponent $\alpha > 0$ denotes the so-called \textit{flow-behaviour exponent} and $\mu_0 > 0$ is the \emph{flow consistency index}. In the case $0 < \alpha < 1$ the fluid exhibits a \textit{shear-thinning} behaviour as its viscosity decreases with increasing shear-rate, while the fluid is called \textit{shear-thickening} in the case $\alpha > 1$. In this case the viscosity is an increasing function of the shear rate. The corresponding stationary non-Newtonian Navier--Stokes system reads
\begin{equation} \label{intro:non-Newtonian_NS}
    \begin{cases}
        (u\cdot \nabla)u - \Div\bigl(2\mu(|\epsilon(u)|) \epsilon(u)\bigr) +\nabla \pi = f &
         \\
        \Div u = 0. &
    \end{cases}
\end{equation}

For $\alpha=1$ we recover a Newtonian behaviour. In practice, constitutive laws for the viscosity are derived from experimental measurements. This is done by determining the parameters inside a prescribed class of laws, for instance $\mu_0$ and $\alpha$ in the case of power-law fluids \eqref{eq:power-law}, to best approximate the measured data.
A large part of the mathematical knowledge in the mechanics of viscous fluids comes from the theoretical and numerical analysis of partial differential equations such as Stokes equation and Navier--Stokes equation, that are derived using constitutive laws. Here, a lot of progress has been made by allowing for increasingly general classes of (nonlinear) viscosity laws (see for example \cite{Lad67, MNR:1993, MRS05, MPS06}).

%\cite{Lad67, MNR:1993, MNRR96, MNR01, KMS02, FMS03, MPS06}. \note{Add further references of Malek, Feireisl,...} 

%----------------------------------
%----------------------------------
\medskip
\subsection{A Data-Driven Approach.}

Nowadays, the availability of big  data and the possibility to mine them is increasing drastically. 
In the present work, instead of including constitutive laws in the mathematical models, we suggest to directly use experimental data in order to find the strain rate $\eps$ and the stress $\sigma$ that satisfy the respective differential constraints and, at the same time, approximate the experimental data best.
In order to realise this mathematically, we are inspired by the articles \cite{KO,CMO}, where a similar approach has first been  introduced in the context of solid mechanics. 

In the present paper, \emph{data sets} consist of strain-stress pairs $(\epsilon,\sigma)$, which we think of as being extracted from an experiment. These data might be obtained by preprocessing the information coming from actual measurements of other physical quantities. We emphasise that the step of preprocessing is also necessary when deriving constitutive laws from measurements. 

\medskip
The motivation for replacing the classical PDE-based approach by the data-driven approach is the following. Once one accepts the fundamental assumptions (first principles) about the nature of the fluid leading to the differential constraints, the PDE-based approach generates two errors with respect to modelling the real world: First, the experimental equipment is imperfect, leading to \emph{measurement errors}. Second, the fitting of a material law to the experimental data introduces a \emph{modelling error}. The \textit{data-driven approach} entirely skips this second step. \\
Turning to the remaining source of errors, with perfect equipment and infinitely many measurements, we expect to recover the viscosity law of the fluid (if it exists). In reality, measurements are however restricted by 
\begin{itemize}
    \item the inaccuracy of the equipment leading to a measurement error;
    \item a limited number of data points. This comprises both `density of measurements' (i.e. given a strain $\epsilon$, how many data points lie in a neighbourhood of $\epsilon$?), as well as `range of measurement' (how large is the range of values of $\epsilon$ that can be measured in the experiment?).
\end{itemize} 
Nevertheless, if over the course of several consecutive measurement series the measurement error decreases or the density and range of data points increases, we expect the experimental data to converge to the material law. Mathematically, we give consideration to this behaviour by
introducing different notions of \textit{data convergence}. In this paper, we restrict ourselves to the study of the following two settings:
\begin{itemize}
    \item data with increasing quality and an unbounded range of measurements;
    \item data with increasing quality and     a bounded but increasing range of    measurements.
\end{itemize}
An overview of the possible settings and where they are discussed in this paper is given in Table \ref{tab}. \\

\bigskip
%-------------------------------------------------
%-------------------------------------------------

\begin{center}
\begin{table}[h!]
\begin{tabular}{|c|c|c|c|}
    \hline
    \multicolumn{2}{|c|}{} & \multicolumn{2}{c|}{\textbf{Range of measurement }} \\
    \cline{3-4}
    \multicolumn{2}{|c|}{} & \textbf{Constant (unbounded)} & \textbf{Increasing} \\
    \hline
    \multirow{2}{*}{\textbf{Error}} & \textbf{Constant (no improvement)} & Need to deal with ''bad`` data & Need to deal with ''bad`` data
    \\
    \cline{2-4}
    & \textbf{Decreasing} & Section \ref{sec:dataconvbd} & Section \ref{sec:dataconveq} \\
    \hline
\end{tabular}
\vspace{0.3cm}
\caption{\label{tab} Measurement error and range of measurement.}
\end{table}
\end{center}

%-------------------------------------------------
%-------------------------------------------------

In the case of non-increasing accuracy, measurements for a given strain rate $\epsilon$ might be located in a neighbourhood of the exact value with a certain \emph{likelihood}. In this case, the set of data converges in a weak sense to some distribution, see \cite{CHO}. See also \cite{RS} for the analysis of single outliers in measurements.% one possibly needs to do several measurements, yielding a set of experimental data $\tigma_{exp}(\epsilon)$, which is somehow concentrated around the expected material law. If the measuring error decreases, then we expect any data point $(\tigma_{exp}(\epsilon))$ to converge to the material law and the ansatz described below works; if we have a constant error in measurement and only increase the number of data, we have to weight the experimental data, such that possible errors do not matter in the long run.

%--------------------------------
%--------------------------------
\medskip
\subsection{Mathematical Approach for the Data-Driven Problem and Main Results.}
We follow the mathematical approach proposed in \cite{CMO} in a solid mechanical context. To this end, we first split the stress $\sigma = -\pi \id + \tigma$ into its hydrostatic part $\pi \id = -\tfrac{1}{d} \Tr (\sigma) \id$ and its viscous part $\tigma$.

Throughout the paper we assume that the \emph{data set} $\D$ comprises pairs  $(\epsilon,\tigma)$ of strain and viscous stress only. The hydrostatic pressure $\pi$ (i.e. the trace of $\sigma$) is not included in the data set, since we allow $\pi$ to attain arbitrary values. This is due to the fact that the pressure does not play a role in the constitutive law for the viscosity but arises as a Lagrange multiplier corresponding to the incompressibility constraint.

Given a \emph{data set} $\D_n=\{(\epsilon_\beta,\tigma_\beta)\}_{\beta\in B_n}$, consisting of pairs $(\epsilon_\beta,\tigma_\beta)$ of symmetric and trace-free matrices in $\R^{d\times d}$, we 
consider the functional 
\begin{equation}\label{intro:I}
    I_n(\epsilon,\tigma) = 
    \begin{cases} 
        \int_{\Omega} \dist\left(\left(\epsilon(x),\tigma(x)\right), \D_n\right) \dd x, &  (\epsilon,\tigma) \in \Ccal 
        \\ 
        \infty, & \text{else},
    \end{cases}
\end{equation}
as a measure for the distance of functions $(\epsilon,\tigma)$, defined on a simply connected and bounded $C^1$-domain $\Omega\subset \R^d$, to the data set.
Here, $\Ccal$ is the \textit{constraint set} of fields $\epsilon,\tigma$ satisfying the prescribed differential constraints and suitable boundary conditions, and $\dist(\cdot,\cdot)$ is a suitable distance function.

In the present paper, the set of differential constraints is given by \eqref{eq:strain} in combination with either the inertialess force balance or the stationary Navier--Stokes force balance. That is, we study both the \emph{linear constraint set}
\begin{equation} \label{intro:linear_C}
    \begin{cases}
        \epsilon = \frac{1}{2}\left(\nabla u + \nabla u^T\right) 
        &
        \\
        \Div u = 0 &
         \\
        -\Div \tigma = f - \nabla \pi, &
    \end{cases}
\end{equation}
as well as the \emph{nonlinear constraint set}
\begin{equation} \label{eq:nonlinear_C}
    \begin{cases}
        \epsilon 
        = 
        \frac{1}{2}\left(\nabla u + \nabla u^T\right) 
        &
        \\
        \Div u = 0 &
         \\
        -\Div \tigma = f - (u\cdot \nabla)u - \nabla \pi. 
        &
    \end{cases}
\end{equation}
The set of constraints is complemented by suitable boundary conditions. Typical boundary conditions in fluid mechanics are the \emph{no-slip condition}
\begin{align}\label{eq:BC_Dirich}
    u=0\quad \text{on }\partial \Omega
\end{align}
and the \emph{Navier-slip condition} 
\begin{align}\label{eq:BC_Navier}
   \begin{cases}
        \tau\cdot\bra{\sigma \nu+\lambda u}=0,
        &
        \tau\in T\partial\Omega
        \\
        u\cdot \nu=0 
        &
        \text{on }\partial \Omega.
    \end{cases}
\end{align}
Here, $\lambda\ge 0$ is the inverse of the so-called slip length and $\nu$ denotes the outer normal to $\partial\Omega$. Moreover, $T\partial\Omega$ denotes the tangential bundle of $\partial\Omega$. The case of free slip $\tau\cdot\sigma \nu=0$ for $\tau\in T\partial\Omega$ is included via $\lambda=0$. The second condition in \eqref{eq:BC_Navier} expresses the non-permeability of the boundary.

Less natural is the \emph{Neumann type boundary condition}
\begin{align}\label{eq:BC_Neum}
    \sigma \nu=0
    \quad 
    \text{on }\partial \Omega.
\end{align}
In the linear case \eqref{intro:linear_C}, we are able to handle all three types of boundary conditions \eqref{eq:BC_Dirich}, \eqref{eq:BC_Navier}, and \eqref{eq:BC_Neum}. In the nonlinear case \eqref{eq:nonlinear_C}, we are able to handle the physical boundary conditions \eqref{eq:BC_Dirich} and \eqref{eq:BC_Navier}. In some cases we allow for \emph{inhomogeneous} boundary conditions, i.e. non-zero right-hand sides.

Coming back to \eqref{intro:I}, a minimiser (or a minimising sequence) of the functional $I_n$ always satisfies the compatibility conditions for $\epsilon$ and $\tigma$ and is as close to the experimental data $\D_n$ as possible. 

In the case in which a sequence $\D_n$ of data sets approximates a limiting set $\D$, corresponding to a constitutive law, it is expected that the minimisers $v_n=(\epsilon_n,\tigma_n)$ of the functional $I_n$ converge to a solution $v$ of the PDE corresponding to the constitutive law. One main contribution of the present article is to specify conditions under which this is true.
We use the following notion for convergence of data sets.
\begin{definition} \label{intro:defi}
We say that a sequence of closed sets $\D_n$ converges to $\D$, $\D_n \to \D$, if the following is satisfied. 
\begin{enumerate}[label=(\roman*)]
    \item\label{item:fine} \textbf{Fine approximation on bounded sets:} There are sequences $a_n\to 0$ and $R_n\to\infty$ such that for all $n\in\N$ and for all $z \in \D$ with $|z| < R_n$, it holds that
    \[
        \dist(z, \D_n) 
        \leq 
        a_n (1 + |z|).
    \]
    \item\label{item:uniform} \textbf{Uniform approximation on bounded sets:} There are sequences $b_n\to 0$ and $S_n\to\infty$ such that for all $n\in\N$ and for all $z_n \in \D_n$ with $|z_n| < S_n$, it holds that 
    \[
        \dist(z_n,\D) 
        \leq 
        b_n (1 + |z_n|).
    \]
\end{enumerate}
Here, $|\cdot| = \dist(\cdot,0)$ defines a pseudo-norm. 
\end{definition}

The sequences $a_n$ and $b_n$ represent the relative error, while $S_n$ and $R_n$ describe the measurement range. Note that condition \ref{item:fine} ensures that every point in the limiting set is approximated by data points in $\D_n$ while condition \ref{item:uniform} ensures that the $\D_n$ approximates $\D$ uniformly.

Moreover, the notion of convergence introduced in Definition \ref{intro:defi} (ii) is justified from an experimental point of view. Indeed, for a given experimental setup we expect the measurements to be precise only within a certain range, $|z| \leq S_n$. For instance, in the experiment conducted by \textsc{Couette} \cite{Couette}, the aim of which was to measure the viscosity of a fluid, the range $S_n$ is linked to the aspect ratio of the rotating cylinders. 
In the setting of this article, the absolute error is allowed to grow with the range of measurements, which extends the setting studied in \cite{CMO}, where the absolute errors are required to converge to zero. 

From a mathematical point of view, the above notion of convergence is justified by the observation that we may restrict the analysis to $p$-equi-integrable recovery sequences in the $\Gamma$-convergence result below. Indeed, the first main result of this article is the following.
\begin{itemize}
    \item \textbf{$\Gamma$-convergence} (Theorem \ref{thm:gammaconvlin} and Theorem \ref{thm:gammaconvsemilin}): If $\D_n \to \D$ and the $\D_n$ satisfy a certain growth condition, then $I_n$ $\Gamma$-converges to
        \begin{equation*}
            I^\ast(\epsilon,\tigma)
            =
            \begin{cases}
                \int_\Omega \QA \dist\bigl(\left(\epsilon(x),\tigma(x)\right),\D\bigr) \dd x, & (\epsilon,\tigma) \in \Ccal
                \\
                \infty, 
                & \text{else},
            \end{cases}
        \end{equation*}
        where $\QA$ is a suitable convex envelope of the distance function corresponding to the differential operators defining the compatibility conditions \eqref{eq:strain} and \eqref{eq:stress}.
\end{itemize}
There are two main challenges in the proof of this result. One difficulty is the suitable modification of sequences of functions while preserving differential constraints and given boundary conditions. 
To overcome this challenge we prove the following result, which might be of independent interest.
\begin{itemize}
    \item \textbf{$p$-equi-integrability and boundary conditions} (Theorem \ref{lemma:equiintboundary}): If a weakly convergent sequence $u_n$ of $L_p$-functions on $\Omega \subset \R^d$ satisfies some differential constraint $\A u_n=0$ for a constant coefficient (and constant rank) differential operator $\A$, we can modify $u_n$ slightly in the sense of closeness in $L_r$ for $r < p$. The modified sequence still satisfies the differential constraint and the same boundary conditions, but is $p$-equi-integrable (i.e. no concentrations of mass occur).
\end{itemize}
This modification result, together with slight adaptations of existing theory on relaxation subject to a \emph{linear} differential constraint, yields Theorem \ref{thm:gammaconvlin}. Moreover, overcoming the second challenge, we show that the relaxation result continues to hold when we include \emph{compact nonlinear perturbations} in the constraint set $\Ccal$, see Theorem \ref{thm:relaxationnonlinear}. This includes in particular the inertia term $[u \mapsto (u\cdot \nabla)u]: W^1_p(\Omega;\R^d) \to W^{-1}_q(\Omega;\R^d)$, whenever $p> 3d/(d+2)$ and $1/p + 1/q = 1$.

In the case of a data set given by a constitutive law, data-driven solutions provide a \emph{new solution concept}. Another main result of this article proves that, in the case of \emph{monotone} constitutive laws, this solution concept is compatible with the concept of weak solutions to PDEs:
\begin{itemize}
     \item \textbf{Consistency} (Section \ref{sec:exact}): If the data set $\D$ corresponds to a monotone constitutive law, e.g. $\D=\{(\epsilon,\vert \epsilon \vert^{\alpha-1}\epsilon)\}$ in the case of power-law fluids, and if the corresponding PDE admits a solution, then for a map $v=(\epsilon,\tigma)$ the following three statements are equivalent:
     \begin{enumerate}[label=(\roman*)]
        \item $v$ is a minimiser of $I^\ast$, i.e. a solution to the \emph{relaxed data-driven problem};
        \item $I^\ast(v) = 0$, i.e. there exists a sequence $v_n \rightharpoonup v$ with $I(v_n) \to 0$;
        \item $v$ is a solution to the corresponding PDE (i.e. to \eqref{intro:non-Newtonian_NS} in the nonlinear case) in the classical weak sense.
    \end{enumerate}
\end{itemize}
In the case of \emph{non-monotone} constitutive laws, the requirement $I^\ast(v)=0$ amounts to a relaxed solution concept that might be useful for instance in order to deal with viscoelastic fluids.

%--------------------------------
%--------------------------------
\medskip

\subsection{Outline of the Paper}
 Section \ref{sec:prelim} shows how the fluid mechanical problems fit into the general theory of constant rank operators. In Section~\ref{subsec:notation} we introduce relevant notation and recall the notion of $\Gamma$-convergence with respect to the weak topology of $L_p$-spaces. In Subsection \ref{sec:Aqc} we recall the generalised form of Problem \eqref{intro:I}, where the differential constraint $(\epsilon,\tigma) \in \Ccal$ is written abstractly as $\A v=0$ and the distance function is replaced by some function $\funct(x,v)$. In Section \ref{sec:diff:fluids} it is demonstrated that the fluid mechanical setting fits into this abstract framework.

An abstract theory for lower-semicontinuity of functionals under linear differential constraints has been developed by \textsc{Fonseca} \& \textsc{M\"uller} (\cite{FM}, see also \cite{BFL}) and we recall these results at the beginning of Section \ref{sec:wlsc}. The remainder of Section \ref{sec:wlsc} is devoted to the modification of the corresponding arguments to fit the fluid mechanical setting of the present paper. In particular, we show the crucial Theorem \ref{lemma:equiintboundary}, which allows us to modify sequences to be equi-integrable, while still respecting both the differential constraints and the boundary conditions. This result is used to extend relaxation results, previously obtained in \cite{BFL}, to the situation of a semilinear differential constraint in Theorem \ref{thm:relaxationnonlinear}.

\smallskip

For Sections \ref{sec:dataconv}--\ref{sec:exact} we return to the fluid mechanical setting and apply the abstract results of Section \ref{sec:wlsc}. 

In Section \ref{sec:dataconv} we discuss two different notions of \emph{data convergence} on a purely set-theoretic level; in particular these notions of convergence are not directly connected to the differential constraints. First, in Subsection \ref{sec:dataconvbd} we introduce a form of data convergence which corresponds to fixed range of measurement (lower-left entry of Table~\ref{tab}) and show that this is equivalent to a suitable notion of convergence for the unconstrained functionals
\begin{equation}\label{intro:J}
    J_n(\epsilon,\tigma) = \int_{\Omega} \dist\left(\left(\epsilon(x),\tigma(x)\right), \D_n\right) \dd x.
\end{equation}
For results about $\Gamma$-convergence of \emph{constrained} functionals of type \eqref{intro:I}, however, we can weaken the notion of convergence to Definition \ref{intro:defi}. This type of convergence is examined in Section \ref{sec:dataconveq}. The reason for this convergence being of interest for $\Gamma$-convergence, is discussed already at the beginning of Section \ref{sec:wlsc} in Theorem \ref{theorem:equiint}.

The abstract results of Section \ref{sec:wlsc} and results about distance functions to data sets $\D_n$ of Section \ref{sec:dataconv} are combined in Sections \ref{sec:diffconst}. In Subsection \ref{subsec:inertialess} and Subsection \ref{subsec:semilin} we introduce the data-driven problem both for inertialess fluids and fluids with inertia. We show that, given boundary conditions and a suitable pointwise coercivity condition, the functionals $I_n$ in \eqref{intro:I} are coercive on the phase space $V$. Therefore, we can apply results from Section \ref{sec:wlsc} to get the respective $\Gamma$-convergence result (Theorem \ref{thm:gammaconvlin} and Theorem \ref{thm:gammaconvsemilin}). 

Finally, Section \ref{sec:exact} links the (relaxed) data-driven problem $I^{\ast}(v)=0$ to the partial differential equations obtained by including a constitutive law in the modelling. We show that if the data set $\D$ coincides with the set obtained by a \emph{monotone} constitutive law, i.e. $\D = \{(\epsilon,\tigma) \colon \tigma= 2\mu(\vert \epsilon \vert)\epsilon\}$, then solutions to the relaxed data-driven problem are weak solutions to the classical PDE problem and vice versa.

\bigskip

\section{Functional Analytic Setting of the Fluid Mechanical Problem} \label{sec:prelim}

In this section we introduce an abstract functional analytic framework that offers a convenient way to reformulate the differential constraints. First, in Subsection \ref{subsec:notation}, we recall the notion of $\Gamma$-convergence and the notion of constant rank operators. The latter requires a short reminder on some results from Fourier analysis. In Subsection \ref{sec:diff:fluids} we show how the differential operators appearing in the fluid mechanical applications
%from fluid mechanics 
fit into the framework of constant rank operators.
% As mentioned in the introduction, we want to apply the results of \cite{FM} in order to verify weak lower-semicontinuity and coercivity of the functional $I$, which in turn yields existence of a minimiser of $I$. To this end, after introducing a suitable functional analytic setting for the fluid mechanical application, we recall some frequently used facts on $\Gamma$-convergence and $\A$-quasiconvexity, as these are the main concepts used in \cite{FM}.

%------------------------------------------------
%------------------------------------------------

\subsection{$\Gamma$-Convergence and Constant Rank Operators}\label{subsec:notation}

%------------------------------------------------
%------------------------------------------------

\subsubsection{Underlying Function Spaces}
Let $\Omega \subset \R^d$ be a bounded, simply connected set with $C^1$-boundary and let
\begin{equation*}
    Y = \R^{d\times d}_{\sym,0}\coloneqq\left\{A \in \R^{d \times d} \colon A=A^T, \mathrm{tr} (A) = 0 \right\}
\end{equation*}
be the set of symmetric trace-free matrices in $\R^{d\times d}$.
We mainly study functions $v \colon \Omega \to Y \times Y$ and we shall write $v = (\epsilon,\tigma)$ to denote their components and $\sigma= -\pi \id + \tigma$ for a function $\pi:\Omega\to \R$.
One might think of $\epsilon$ as the strain and $\tigma$ the viscous part of the stress. For $1< p,q< \infty$ with $1/p+1/q=1$, we consider the \emph{phase space}
\begin{equation*}
    V 
    = 
    L_p(\Omega;Y) \times L_q(\Omega;Y), 
\end{equation*}
equipped with the norm 
\[
    \Vert v \Vert_{V} = \Vert \epsilon \Vert_{L_p} + \Vert \tigma \Vert_{L_q}.
\]
We call $Y \times Y $ the \emph{local phase space}.
Recall that we assume throughout the paper that the pressure $\pi$ (i.e. the trace of $\sigma$) is not considered as part of the data. Consequently, each data set $\Dcal_n$ is a subset of $Y\times Y$. In order to introduce a distance on $Y \times Y$, for pairs $(\epsilon_i,\tigma_i)\in Y \times Y$, $i=1,2$, we define
\begin{equation*}
    \dist\!\left((\epsilon_1,\tigma_1),(\epsilon_2,\tigma_2)\right)
    =
    \tfrac{1}{p} \abs{\epsilon_1-\epsilon_2}^p
    +
    \tfrac{1}{q} \abs{\tigma_1-\tigma_2}^q
\end{equation*}
and therewith
\begin{equation} \label{eq:def_metric}
    d\!\left((\epsilon_1,\tigma_1),(\epsilon_2,\tigma_2)\right)=\left( \dist\left((\epsilon_1,\tigma_1),(\epsilon_2,\tigma_2)\right)\right)^{\frac{1}{\max\{p,q\}}}.
\end{equation}
The  function $d(\cdot,\cdot)$ is defined by taking the $p$-th, respectively the $q$-th root of $\dist(\cdot,\cdot)$, in order to guarantee that the triangle inequality is satisfied . Thus,  $d(\cdot,\cdot)$ defines a metric on $Y\times Y$.

Accordingly, we define the distance on the phase space $V$ by 
\begin{equation*}
    \dist(v_1,v_2)=\int_\Omega \dist\left(v_1(x),v_2(x)\right)\dd x,
    \quad
    v_1,v_2\in V.
 \end{equation*}
 
We start by proving that the distance function $d(\cdot,\cdot)$, introduced in \eqref{eq:def_metric}, defines a metric.
%---------------------------------------------

% \note{RS: I think this should not be in the final draft, at most I would put the central inequality} Let us show that the map $d$, introduced in \eqref{eq:def_metric} defines a metric. To this end, we prove the following auxiliary result.

%---------------------------------------------

% \begin{lemma}\label{l:triangleineq}
% Let $a_i,b_i,c_i \in [0,\infty)$, $i=1,2$ and $p \geq q$. Then 
% \begin{equation}\label{eq:triangle1}
%     \left( (a_1+a_2)^p + (b_1+b_2)^q \right)^{1/p} \leq (a_1^p+b_1^q)^{1/p} +(a_2^p+b_2^q+)^{1/p}
% \end{equation}
% Likewise, if $q >p$
% \begin{equation}\label{eq:triangle2}
%     \left( (a_1+a_2)^p + (b_1+b_2)^q +\right)^{1/q} \leq (a_1^p+b_1^q+)^{1/p} +(a_2^p+b_2^q+)^{1/q}
% \end{equation}
% \end{lemma}

% %---------------------------------------------

% \begin{proof}
% We just prove \eqref{eq:triangle1}, \eqref{eq:triangle2} follows likewise. Note that the map $t \mapsto t^{1/p}$ is monotone. Using that $q\leq p$, we have that $s \mapsto s^{q/p}$ is concave. Therefore for $s_1,s_2 \in \R$ \begin{equation*}
%     (s_1 + s_2)^{q/p} \leq s_1 ^{q/p} + s_2^{q/p}
% \end{equation*}
% and hence,
% \begin{align*}
%      \left( (a_1+a_2)^p + (b_1+b_2)^q  \right)^{1/p} & \leq  \left( (a_1+a_2)^p + (b_1^{q/p}+b_2^{q/p})^p   \right)^{1/p} \\
%      & \leq (a_1^p+(b_1^{q/p})^p)^{1/p}+ (a_2^p+(b_2^{q/p})^p)^{1/p} \\
%      &= (a_1^p+b_1^q)^{1/p} +(a_2^p+b_2^q)^{1/p}
% \end{align*}
% \end{proof}

%---------------------------------------------

\begin{lemma} The map $d \colon (Y \times Y) \times (Y \times Y) \to \R$ is a metric.
\end{lemma}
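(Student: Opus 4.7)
\medskip

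\noindent The plan is to verify the three defining properties of a metric in turn. Non-negativity of $d$ is immediate from non-negativity of $|\cdot|^p$ and $|\cdot|^q$, and symmetry follows at once from the symmetry of the Euclidean norm $|\cdot|$ on $Y$. Identity of indiscernibles is equally quick: if $d(v_1,v_2)=0$ then both $|\epsilon_1-\epsilon_2|^p = 0$ and $|\tigma_1-\tigma_2|^q=0$, forcing $v_1=v_2$, and the converse is trivial. All the work therefore goes into the triangle inequality, which I will reduce to subadditivity of a suitable scalar function.

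Let $F\colon [0,\infty)^2 \to [0,\infty)$ be defined by $F(A,B) = \bigl(\tfrac{1}{p} A^p + \tfrac{1}{q} B^q\bigr)^{1/r}$ with $r=\max\{p,q\}$, so that $d(v_1,v_2) = F(|\epsilon_1-\epsilon_2|,|\tigma_1-\tigma_2|)$. Since $F$ is componentwise non-decreasing on $[0,\infty)^2$, the triangle inequality for $|\cdot|$ gives
\begin{equation*}
    d(v_1,v_3)
    \leq
    F\bigl(|\epsilon_1-\epsilon_2|+|\epsilon_2-\epsilon_3|,\,|\tigma_1-\tigma_2|+|\tigma_2-\tigma_3|\bigr),
\end{equation*}
so it suffices to establish the subadditivity
\begin{equation*}
    F(A_1+A_2,\,B_1+B_2) \leq F(A_1,B_1) + F(A_2,B_2), \qquad A_i, B_i \geq 0.
\end{equation*}

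To prove this, I will assume without loss of generality that $p \geq q$, so that $r=p$ and $q/p\in(0,1]$, and rewrite
\begin{equation*}
    F(A,B) = \bigl(a(A)^p + b(B)^p\bigr)^{1/p},
    \qquad
    a(A) := A/p^{1/p},
    \quad
    b(B) := B^{q/p}/q^{1/p}.
\end{equation*}
The map $a$ is linear, while $b$ is subadditive on $[0,\infty)$ thanks to the elementary inequality $(B_1+B_2)^{q/p} \leq B_1^{q/p}+B_2^{q/p}$, which holds because $t \mapsto t^{q/p}$ is concave with value $0$ at $0$. Combining these two facts with the monotonicity of $t\mapsto t^p$ and the triangle inequality in $\ell^p$ (Minkowski, since $p \geq 1$) yields the chain
\begin{align*}
    F(A_1+A_2,B_1+B_2)
    &= \bigl((a(A_1)+a(A_2))^p + b(B_1+B_2)^p\bigr)^{1/p}
    \\
    &\leq \bigl((a(A_1)+a(A_2))^p + (b(B_1)+b(B_2))^p\bigr)^{1/p}
    \\
    &\leq \bigl(a(A_1)^p + b(B_1)^p\bigr)^{1/p} + \bigl(a(A_2)^p + b(B_2)^p\bigr)^{1/p}
    \\
    &= F(A_1,B_1) + F(A_2,B_2),
\end{align*}
completing the argument.

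The only slightly subtle point is that $F$ is not a norm on $\R^2$: it fails to be positively homogeneous whenever $p\neq q$. The natural temptation to invoke Minkowski directly on the pair $(\epsilon,\tigma)$ therefore does not work, and the main obstacle is finding the right change of variables $(A,B) \mapsto (a(A), b(B))$ that converts $F$ into the $\ell^p$-norm of a subadditive vector-valued function — after which the proof reduces to Minkowski plus the subadditivity of fractional powers.
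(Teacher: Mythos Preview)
Your proof is correct and follows essentially the same route as the paper: reduce the triangle inequality to a scalar subadditivity statement, use subadditivity of $t\mapsto t^{q/p}$ (concave, vanishing at $0$) to replace $(B_1+B_2)^q$ by $(B_1^{q/p}+B_2^{q/p})^p$, and then apply Minkowski in $\ell^p$. Your explicit change of variables $(A,B)\mapsto(a(A),b(B))$ makes the structure a bit more transparent than the paper's write-up, but the underlying argument is identical.
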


%---------------------------------------------

\begin{proof}
Positivity, definiteness and symmetry are clear.
The triangle inequality follows from the elementary inequality
% Let $a_i,b_i,c_i \in [0,\infty)$, $i=1,2$ and $p \geq q$. Then 
\begin{equation}\label{eq:triangle1}
    \left( (a_1+a_2)^p + (b_1+b_2)^q \right)^\frac{1}{\max\{p,q\}} 
    \leq 
    \left(a_1^p+b_1^q\right)^\frac{1}{\max\{p,q\}}
    +
    \left(a_2^p+b_2^q\right)^\frac{1}{\max\{p,q\}},
\end{equation}
being valid for all $a_i,b_i \in [0,\infty)$, $i=1,2$, and $p \geq q$. 
Indeed, assume withput loss of generality that $p \geq q$. Then, since the functions $s \mapsto s^{q/p},s\mapsto s^{1/p}\, s\in \R$, are concave, we obtain
\begin{align*}
     \left[(a_1+a_2)^p + (b_1+b_2)^q\right]^{1/p} 
     & \leq  
     \left[(a_1+a_2)^p + \bigl(b_1^{q/p}+b_2^{q/p}\bigr)^p   \right]^{1/p} \\
     & \leq 
     \left[a_1^p+\bigl(b_1^{q/p}\bigr)^p\right]^{1/p}
     + 
     \left[a_2^p+\bigl(b_2^{q/p}\bigr)^p\right]^{1/p} \\
     &= 
     \bigl(a_1^p+b_1^q\bigr)^{1/p} +
     \bigl(a_2^p+b_2^q\bigr)^{1/p}.
\end{align*}
\end{proof}

In the following we embed $\Omega$ into the $d$-dimensional torus $\T_d$ when it is convenient. Without loss of generality we therefore assume that $\Omega$ is compactly contained in $(0,1)^d$. In general we use $C$ as a generic constant. However, we use specific constants whenever it is convenient. 

%---------------------------------------
%---------------------------------------
%---------------------------------------

\subsubsection{$\Gamma$-convergence}

In this subsection we recall some well-known results on $\Gamma$-convergence that are frequently used throughout the paper. We use this notion of convergence to consider the behaviour of functionals of type \eqref{intro:I} and \eqref{intro:J} under convergence of the data.

\begin{definition} \label{def:Gammalim}
Let $(X,d)$ be a metric space. A sequence of functionals $I_n \colon X \to [-\infty,\infty]$, $\Gamma$-converges to $I \colon X \to [-\infty,\infty]$, in symbols $I= \Gamma-\lim_{n \to \infty} I_n$, whenever the following is satisfied: 
\begin{enumerate}[label=(\roman*)]
    \item \textbf{liminf-inequality:} For all $x \in X$ and for all sequences $x_n  \to x$ we have
    \[
    I(x) \leq \liminf_{n \to \infty} I_n(x_n).
    \]
    \item  \textbf{limsup-inequality:}  For all $x \in X$ there exists a sequence $x_n \to x$ (called the \textit{recovery sequence}) such that \[
    I(x) \geq \limsup_{n \to \infty} I_n(x_n).
    \]
\end{enumerate}
\end{definition}

%-------------------------------------
%-------------------------------------

\begin{remark}
\begin{enumerate}[label=(\roman*)]
    \item In metric spaces the constant sequence $I_n =I$ possesses a $\Gamma$-limit $I^{\ast}$, namely the lower-semicontinuous hull of $I$, given by 
    \begin{equation} \label{eq:Gamma-lim}
        I^{\ast}(x) = \inf_{x_n \to x } \liminf_{n \to \infty} I(x_n).
    \end{equation}
    $I^\ast$ is called the \textit{relaxation} of $I$.
    \item If each $x_n$ is a minimiser of $I_n$ and $x_n \to x$, then $x$ is a minimiser of $I$.
    \item One may define $\Gamma$-convergence on topological spaces, cf. \cite{DalMaso}. 
    This reproduces the definition on metric spaces when equipped with the standard topology.
    Weak convergence is not metrisable on Banach spaces. 
    However, it is metrisable on bounded sets of reflexive, separable Banach spaces.
    Hence, if a functional $I$ satisfies a certain growth condition; i.e.
    \begin{equation} \label{eq:babycoercive}
        \alpha(\Vert x \Vert) \leq I(x)
    \end{equation}
    for a function $\alpha \colon [0,\infty) \to \R$ with $\alpha(t) \to \infty$ as $t \to \infty$, we may use the metric for weak convergence defined on bounded sets of the Banach space and treat the Banach space together with the weak topology as a metric space.
    \item In topological spaces, especially in Banach spaces equipped with the weak topology, the constant sequence $I_n=I$ does in general not possess a sequential $\Gamma$-limit, as the infimum in \eqref{eq:Gamma-lim} does not need to be a minimum.
    \item If $I$ does not satisfy the growth condition \eqref{eq:babycoercive}, it is possible to consider the sequential $\Gamma$-limit, given as in Definition \ref{def:Gammalim}. However, this might not exist, even if the topological $\Gamma$-limit of a sequence of functionals exists. In particular, the constant sequence might not have a sequential $\Gamma$-limit.
\end{enumerate}
\end{remark}

%-------------------------------------
%-------------------------------------

In the following we only consider the sequential $\Gamma$-limit of sequences in the weak topology of some Banach space (usually $L_p \times L_q$). If the functional $I$ is coercive in the sense of \eqref{eq:babycoercive}, then the sequential $\Gamma$-limit coincides with the topological $\Gamma$-limit.

The following lemma links $\Gamma$-convergence to uniform convergence of functionals.

%-------------------------------------
%-------------------------------------

\begin{lemma}[Uniform convergence and $\Gamma$-convergence] \label{gammaconv:1}
Let $V$ be a reflexive, separable Banach space equipped with the weak topology. Suppose that $I_n, I \colon V \to [-\infty,\infty]$, such that $I_n \to I$ uniformly on bounded sets of $V$. If the sequential $\Gamma$-limit of the constant sequence $I$ exists, then also $I_n$ possesses a $\Gamma$-limit and 
\[
    \Gamma-\lim_{n \to \infty} I_n 
    = 
    \Gamma-\lim_{n \to \infty} I 
    = 
    I^{\ast}.
\]
\end{lemma}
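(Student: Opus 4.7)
The plan is to reduce the $\Gamma$-convergence of the sequence $I_n$ to the $\Gamma$-convergence of the constant sequence $I$, exploiting that along any weakly convergent sequence the difference $I_n(x_n) - I(x_n)$ vanishes.

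The key preliminary observation is that any weakly convergent sequence $x_n \rightharpoonup x$ in the reflexive separable Banach space $V$ is bounded (Banach--Steinhaus). Therefore, if $R > 0$ is a uniform bound for $\|x_n\|_V$, then the uniform convergence hypothesis yields
\begin{equation*}
    |I_n(x_n) - I(x_n)| \leq \sup_{\|y\|_V \leq R} |I_n(y) - I(y)| \longrightarrow 0.
\end{equation*}
In particular,
\begin{equation*}
    \liminf_{n\to\infty} I_n(x_n) = \liminf_{n\to\infty} I(x_n)
    \quad\text{and}\quad
    \limsup_{n\to\infty} I_n(x_n) = \limsup_{n\to\infty} I(x_n).
\end{equation*}

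With this in hand I would verify the two defining inequalities of Definition \ref{def:Gammalim} for the sequence $I_n$ with candidate limit $I^{\ast}$. For the liminf-inequality: given any $x_n \rightharpoonup x$, the identity above combined with the liminf-inequality applied to the constant sequence $I$ (whose sequential $\Gamma$-limit is $I^\ast$ by hypothesis) gives
\begin{equation*}
    \liminf_{n\to\infty} I_n(x_n) = \liminf_{n\to\infty} I(x_n) \geq I^{\ast}(x).
\end{equation*}
For the limsup-inequality: let $(x_n)_{n\in\N}$ be a recovery sequence for $x$ relative to the constant sequence $I$, so that $x_n \rightharpoonup x$ and $\limsup_n I(x_n) \leq I^{\ast}(x)$. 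Then, by the same identity,
\begin{equation*}
    \limsup_{n\to\infty} I_n(x_n) = \limsup_{n\to\infty} I(x_n) \leq I^{\ast}(x),
\end{equation*}
so $(x_n)$ is a valid recovery sequence for $x$ relative to $I_n$ as well.

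The only subtlety — and what I expect to be the main point requiring care — is the handling of extended real-valued functionals. Uniform convergence on bounded sets must be read in the sense that for every bounded $B \subset V$ and every $\varepsilon > 0$ there is $N$ with $|I_n(y) - I(y)| < \varepsilon$ for all $n \geq N$ and $y \in B$, with the convention that this forces $I_n(y) = \pm\infty$ whenever $I(y) = \pm\infty$ for large $n$. Under this interpretation, the chain $\liminf I_n(x_n) = \liminf I(x_n)$ remains valid (both sides equal $+\infty$ simultaneously, and analogously for $-\infty$), and no further argument is needed. Combining the two inequalities yields $\Gamma\text{-}\lim_{n\to\infty} I_n = I^{\ast}$, which is the desired conclusion.
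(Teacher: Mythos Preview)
Your proof is correct and follows essentially the same approach as the paper: both use that a weakly convergent sequence is bounded, then invoke uniform convergence on that bounded set to conclude $\liminf I_n(x_n)=\liminf I(x_n)$ and $\limsup I_n(x_n)=\limsup I(x_n)$, reducing the $\Gamma$-convergence of $I_n$ to that of the constant sequence $I$. Your version is in fact more carefully written, as you explicitly separate the arbitrary-sequence case (liminf) from the recovery-sequence case (limsup) and address the interpretation of uniform convergence for extended-real-valued functionals.
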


%-------------------------------------
%-------------------------------------
\medskip

Note that the sequential $\Gamma$-limit of the constant sequence $I$ exists if the functional is coercive.

%-------------------------------------
%-------------------------------------
  
\begin{proof}
If $v_n \rightharpoonup v$ is a bounded sequence in $V$, we have 
\[
    \limsup_{m \to \infty} \sup_{n \in \N} \vert I_m(v_n) - I(v_n) \vert =0.
\]
Therefore, 
\[
    \limsup_{n \to \infty} I_n(v_n) 
    = 
    \limsup_{n \to \infty} I(v_n)
    \leq 
    I^\ast(v)
    \quad \text{and} \quad 
    \liminf_{n \to \infty} I_n(v_n) 
    = 
    \liminf_{n \to \infty} I(v_n)
    \geq 
    I^\ast(v),
\]
which establishes both the $\limsup$-inequality and the $\liminf$-inequality.
\end{proof}  

%------------------------------------------------
%------------------------------------------------
\subsubsection{Korn--Poincar\'e inequality}
 
In this subsection, we revisit a combination of Korn's inequality (i.e. the full gradient is controlled by its symmetric part) and Poincare's inequality to obtain an estimate of the form 
\[
    \Vert u \Vert_{W^1_p} 
    \leq 
    C \Vert \epsilon \Vert_{L_p}, 
    \quad \text{where} \quad
    1 < p< \infty
    \quad \text{and} \quad
    \epsilon = \tfrac{1}{2} \left(\nabla u + \nabla u^T\right).
\]
This estimate is a straightforward consequence of the $p$-Korn inequality and the Poincar\'e inequality, cf. for instance \cite{CiarletKorn}. For the convenience of the reader we provide the proof. In the following we use the notation
\begin{equation}
    \R^{d\times d}_{\skw}\coloneqq \{A\in \R^{d\times d}: A=-A^T\}.
\end{equation}
\begin{lemma}[Abstract Korn--Poincar\'e inequality] \label{kopo} 
Let $1<p<\infty$ and $\Omega \subset \R^d$ be open, connected, and bounded with $C^1$-boundary. Then the following is true.
\begin{enumerate} [label=(\roman*)]
    \item \label{abstract:1} There is a constant $C=C(p,\Omega)$, such that for any $u \in W^1_p(\Omega;\R^d)$ we have
    \[
        \Vert u - (A_u x +b_u) \Vert_{W^1_p} \leq C \Vert \nabla u + \nabla u^T \Vert_{L_p},
    \]
    where  $A_u = \tfrac{1}{2} \fint_{\Omega} \nabla u - \nabla u^T \dd x $ and $b_u = \fint_{\Omega} u \dd x$.
    \item \label{abstract:2} Let $X \subset W^1_p(\Omega;\R^d)$ be a closed subspace, such that 
    \[
        X \cap \left\{A x + b \colon A \in \R^{d \times d}_{\skw}, b \in \R^d\right\} = \{0\}.
    \]
    Then there is a constant $C=C(p,\Omega,X)$, such that for any $u \in X$ we have
    \[
        \Vert u \Vert_{W^{1}_p} 
        \leq 
        C \Vert \nabla u + \nabla u^T \Vert_{L_p}.
    \]

\end{enumerate}
\end{lemma}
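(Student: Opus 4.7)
The strategy for both parts is classical: combine Korn's second inequality
\begin{equation*}
    \|\nabla u\|_{L_p(\Omega)} \leq C\bigl(\|u\|_{L_p(\Omega)} + \|\epsilon(u)\|_{L_p(\Omega)}\bigr),
\end{equation*}
valid for $1<p<\infty$ on bounded $C^1$-domains (where $\epsilon(u) = \tfrac12(\nabla u + \nabla u^T)$), with a compactness/contradiction argument exploiting the fact that the kernel of the linear map $u \mapsto \epsilon(u)$ is precisely the space of rigid motions $u(x) = Ax + b$ with $A \in \R^{d\times d}_{\skw}$ and $b \in \R^d$. Korn's second inequality itself is a non-trivial classical fact whose standard proof rests on Ne\v{c}as's lemma on negative Sobolev norms together with the identity $\partial_j\partial_k u_i = \partial_j \epsilon_{ik}(u) + \partial_k \epsilon_{ij}(u) - \partial_i \epsilon_{jk}(u)$; I would invoke it from the literature, e.g.\ \cite{CiarletKorn}.

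For part \ref{abstract:1}, set $v = u - (A_u x + b_u)$. Since $\nabla(A_u x)$ is the constant skew matrix $A_u$, its symmetric part vanishes and thus $\epsilon(v) = \epsilon(u)$. By the choices of $A_u$ and $b_u$, the integrals $\int_\Omega v\,\dd x$ and $\int_\Omega (\nabla v - \nabla v^T)\,\dd x$ both vanish. It therefore suffices to prove that on the closed subspace
\begin{equation*}
    Z_p = \bigl\{ w \in W^1_p(\Omega;\R^d) : \textstyle\int_\Omega w\,\dd x = 0 \text{ and } \int_\Omega (\nabla w - \nabla w^T)\,\dd x = 0\bigr\}
\end{equation*}
one has $\|w\|_{L_p} \leq C\|\epsilon(w)\|_{L_p}$, since combining this with Korn's second inequality yields $\|w\|_{W^1_p} \leq C\|\epsilon(w)\|_{L_p}$. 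The Poincar\'e-type bound on $Z_p$ I would establish by contradiction: otherwise there is a sequence $w_n \in Z_p$ with $\|w_n\|_{L_p} = 1$ and $\epsilon(w_n) \to 0$ in $L_p$; Korn's inequality then bounds $\|\nabla w_n\|_{L_p}$, Rellich--Kondrachov extracts an $L_p$-convergent subsequence, and applying Korn to differences $w_n - w_m$ upgrades this to a Cauchy, hence strongly convergent, sequence in $W^1_p$. The limit $w \in Z_p$ satisfies $\epsilon(w) = 0$, so $w(x) = Ax + b$ is a rigid motion; the two integral constraints then force $A = 0$ and $b = 0$, contradicting $\|w\|_{L_p} = 1$.

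Part \ref{abstract:2} follows by the same contradiction argument applied directly on $X$: from $u_n \in X$ with $\|u_n\|_{W^1_p} = 1$ and $\epsilon(u_n) \to 0$ one extracts, via Korn's inequality and Rellich--Kondrachov, a subsequence converging strongly in $W^1_p$ to some $u$; closedness of $X$ gives $u \in X$, and $\epsilon(u) = 0$ makes $u$ a rigid motion. The hypothesis $X \cap \{Ax+b : A \in \R^{d\times d}_{\skw},\ b \in \R^d\} = \{0\}$ then forces $u = 0$, contradicting $\|u\|_{W^1_p} = 1$.

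The main obstacle is not conceptual but definitional: the compactness argument is of a type standard in PDE theory, and the only ingredient that genuinely uses the structure of $\epsilon$ rather than just compactness is Korn's second inequality. Once that is in hand, both statements reduce to tracking the kernel of $\epsilon$ under suitable closedness and genericity conditions, and the only delicate point in part \ref{abstract:1} is keeping track of how the Poincar\'e-type constraint defining $Z_p$ removes exactly the rigid motions that Korn's inequality leaves undetermined.
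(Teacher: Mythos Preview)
Your approach is sound and differs from the paper's. For part \ref{abstract:1} the paper argues directly: from the identity $\partial_i\partial_j u_k = \partial_j \epsilon_{ik}(u) + \partial_i \epsilon_{jk}(u) - \partial_k \epsilon_{ij}(u)$ it bounds $\|\nabla^2 u\|_{W^{-1}_p}$ by $\|\epsilon(u)\|_{L_p}$, then applies Ne\v{c}as' lemma twice (once to $\nabla u - \fint_\Omega \nabla u$, once more to descend to $u$) to obtain the estimate without any compactness step. You instead black-box Korn's second inequality and run a Rellich--Kondrachov contradiction argument. Both rest ultimately on Ne\v{c}as, but yours is the more common textbook presentation, while the paper's is more direct and avoids the indirect step. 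For part \ref{abstract:2} both proofs are compactness arguments; the paper first reduces to part \ref{abstract:1} via a projection onto the finite-dimensional space of rigid motions, whereas you run the contradiction directly on $X$ --- your version is arguably cleaner here.

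One genuine oversight in your part \ref{abstract:1}: the claim that $\int_\Omega v\,\dd x = 0$ is false in general. With $b_u = \fint_\Omega u\,\dd x$ one computes $\fint_\Omega v\,\dd x = -A_u\,\fint_\Omega x\,\dd x$, which vanishes only when the centroid $\bar x$ of $\Omega$ is at the origin. In fact the lemma as stated is literally false otherwise: for $u(x) = Ax$ with $A$ skew one has $\epsilon(u) = 0$ but $u - (A_u x + b_u) = -A\bar x \neq 0$. The fix is trivial --- translate $\Omega$ so that $\bar x = 0$, or equivalently replace $b_u$ by $\fint_\Omega (u - A_u x)\,\dd x$ --- and the paper's own proof glosses over exactly the same point when it applies Ne\v{c}as ``twice''. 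So this is not a flaw in your strategy, but you should flag the normalization rather than assert the vanishing.
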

\begin{proof}
\textbf{\ref{abstract:1}} Recall that there is a first-order  differential operator $\tilde{\A}$ with constant coefficients, such that 
\[
    \nabla^2 u = \tilde{\A} \left(\tfrac{1}{2} \left(\nabla u + \nabla u^T\right) \right).
\]
Therefore, we can bound 
\begin{align} \label{eq:hessian}
    \Vert \nabla^2 u \Vert_{W^{-1}_p} 
    \leq 
    C \Vert \nabla u + \nabla u^T \Vert_{L_p}.
\end{align}
Using Ne\v{c}as' lemma \cite{Nevcas,Czech} for functions with zero mean twice and writing $A'_u = \fint_{\Omega} \nabla u \dd x$, we get 
\begin{align}\label{eq:double_P}
    \Vert u - (A'_ux +b_u) \Vert_{W^1_p} \leq C \Vert \nabla^2 u \Vert_{W^{-1}_p}.
\end{align}
To obtain an inequality featuring only the skew-symmetric part $A_u = \tfrac 12\bra{A'_u - (A'_u)^T}$ note that by the triangle inequality 
\[
\Vert u - (A_u x + b_u) \Vert_{W^1_p} \leq \Vert u - (A'_u x + b_u) \Vert_{W^1_p} + \Vert (A_u-A'_u)x \Vert_{W^1_p}.
\]
The statement follows by estimating each term on the right-hand side by $C \Vert \nabla u + \nabla u^T \Vert_{L_p}$. For the first term we combine \eqref{eq:double_P} and \eqref{eq:hessian} to obtain
\[
\Vert u - (A'_u x + b_u) \Vert_{W^1_p} \leq  C \Vert \nabla u + \nabla u^T \Vert_{L_p}.
\]
Using Poincar\'e's and Jensen's inequalities, the second term can be estimated by
\begin{equation*}
    \Vert (A_u - A'_u)x \Vert_{W^1_p} 
    \leq 
    C\Vert A_u - A'_u \Vert_{L_p} 
    = 
    |\Omega|^{1/p} 
    \fint_{\Omega}\tfrac{1}{2}
    \left(\nabla u + \nabla u^T\right) \dd x 
    \leq 
    C\Vert\nabla u + \nabla u^T \Vert_{L_p}. 
\end{equation*}

\noindent\textbf{\ref{abstract:2}} Note that the space 
\[
    \tilde{X} = \left\{Ax +b \colon A \in \R^{d \times d}_{\skw}, b \in \R^d\right\}
\]
is finite-dimensional. As a consequence, if $\tilde{P} \colon W^{1,p}(\Omega;\R^d) \to \tilde{X}$ is a projection, then there is a constant $C(X)$, such that \begin{equation} \label{proj:finite}
\Vert u \Vert_{W^1_p} \leq C \Vert u - P u \Vert_{W^1_p}, \quad  u \in X.
\end{equation}
Indeed, if \eqref{proj:finite} were false, then there  would exist a sequence $u_n \subset X$ with $\Vert u_n \Vert_{W^1_p} =1$ and $\Vert u_n - P u_n \Vert_{W^1_p} \to 0$ as $n \to \infty$. As $P u_n \in \tilde{X}$ is bounded and $\tilde X$ is finite dimensional, there is a subsequence $P u_{n_j}$ converging strongly to some $y \in \tilde{X}$. Since $\Vert u_n - P u_n \Vert_{W^1_p} \to 0$, this implies $u_{n_j} \to y$ in $W^1_p(\Omega;\R^d)$. But this is a contradiction, as $X$ is closed, $\Vert u_{n_j} \Vert_{W^1_p} =1$ and $X \cap \tilde{X} = \{0\}$.
Part \ref{abstract:1} in combination with \eqref{proj:finite} yields \ref{abstract:2}, since $Pu=A_ux+b_u$.
\end{proof}

%----------------------------------

\subsubsection{Constant Rank Operators} \label{sec:Aqc}
% \textcolor{magenta}{In this subsection we introduce, for functions $\funct \colon X_1 \times X_2 \to \R$ the notion of $\A$-quasiconvexity, where $\A$ is a certain differential operator. Having this property for the distance function $\dist(\cdot,\cdot)$ and identifying $\A$ with the differential constraints for $(\eps,\tigma)$ turn out to enable one apply \cite{FM} and thus to obtain weak lower-semicontinuity of the functional $I$.}
In this subsection we introduce the version of constant rank operators used in this paper. To this end, we slightly adapt the notion of homogeneous constant rank operators \cite{Murat} since the differential operator $\A (\epsilon,\tigma)= (\curl \curl^T \epsilon, \diverg \tigma)$ appearing in the fluid mechanical application is only componentwise homogeneous.

We consider a differential operator $\A$ defined on functions $ v \colon \Omega\to \R^{m_1} \times \R^{m_2}$ defined via \[
\A(v_1,v_2) = (\A_1 v_1, \A_2 v_2)
\]
where $\A_1$ and $\A_2$ are homogeneous constant coefficient differential operators of order $k_i$, $i=1,2$, i.e. 
\begin{align}
    \A_i 
    \colon C^{\infty}(\Omega;\R^{m_i}) \to C^{\infty}(\Omega;\R^{l_i}),
    \quad
    \A_i v_i 
    = 
    \sum_{\vert \alpha \vert=k_i} A_{\alpha}^i \partial_{\alpha} v_i.
    \label{eq:A_i}
\end{align}
Recall that the Fourier symbols corresponding to the operators defined in \eqref{eq:A_i} are given by
\[
    \A_i[\xi] := \sum_{\vert \alpha \vert =k_i} A_{\alpha}^i \xi^{\alpha} \in \Lin (\R^{m_i};\R^{l_i}),
    \quad 
    i=1,2.
\]

%--------------------------------------

\begin{definition}
$\A=(\A_1,\A_1)$ satisfies the \emph{constant rank property} if both $\A_1$ and $\A_2$ satisfy the constant rank property; that is, if
\begin{equation*}
    \dim\ker \A_i[\xi] 
    =
    r_i 
    \quad \text{for some fixed } 
    r_i \in \N 
    \text{ and for all } \xi \in \R^d \setminus\{0\}.
\end{equation*}
The \emph{characteristic cone} of $\A$ is defined as \begin{equation*}
    \Lambda_{\A}:=  \bigcup_{\xi \in \R^d \setminus \{0\}} \ker \A_1[\xi] \times \ker \A_2[\xi] \subset \R^{m_1} \times \R^{m_2}.
\end{equation*}
The operator $\A$ satisfies the \emph{spanning property} whenever 
\begin{equation*}
    \mathrm{span} \Lambda_{\A} =\R^{m_1} \times \R^{m_2}.
\end{equation*}
\end{definition}

% \begin{example} \label{ex:operators}
% Let $Y=\R^{d\times d}_{\sym,0}$ as introduced in Subsection \ref{subsec:notation} be the space of symmetric, traceless matrices in $\R^{d\times d}$. In the present fluid mechanical setting we have
% \textcolor{magenta}{(Ist es nicht etwas komisch, $\R^{m_1}$ mit $Y$ zu identifizieren?)} \rnote{Wie sonst?}
% \begin{align*}
%     \begin{array}{rclrclrcl} \R^{m_1} &= &Y, &\quad v_1&=& \epsilon,&\quad \A_1(\epsilon) &=& \curl \curl^T \epsilon,\\
%     \R^{m_2} &=& Y \times \R,& \quad v_2&=&(\tigma,\pi),&\quad \A_2(\tigma,\pi) &=& \diverg \tigma - \nabla \pi,\end{array}
% \end{align*}
% \note{RS: plz check the following} where both $\Div$ and $\curl$ are applied row-wise, and $\curl^T$ is applied column-wise. Note that $\curl\curl^T$ is in general dimension a $4$-tensor with the usual simplifications in $2$ and $3$ dimensions.
% \end{example}

%--------------------------------------

\begin{remark}
If $u_i \in W^{k_i}_{p_i}(\T_d;\R^{m_i})$ can be written as 
\[
    u_i = \sum_{\xi \in \Z^d} \hat{u}_i(\xi) e^{-2 \pi i \xi \cdot x},
    \quad
   i=1, 2,
\]
then $u_i \in \ker \A_i$ if and only if for all $\xi \in \Z^d \setminus\{0\}$ we have $\hat{u}_i(\xi) \in \ker \A_i[\xi]$. If, in addition, the operators $\A_i$  satisfy the constant rank property, then $\Z^d \setminus \{0\}$ can be replaced by $\R^d \setminus \{0\}$.
\end{remark}

%--------------------------------------

% \begin{example} \label{ex:Fouriersymbol}
% \note{RS: Maybe we should actually compute $\A_1[\xi]$ and $\A_2[\xi]$ and carry this out to make this simpler for the reader} \textcolor{cyan}{(In my opinion yes!)} In our setting (cf. Example \ref{ex:operators}), for \textcolor{magenta}{a fixed} $\xi \in \R^d\textcolor{magenta}{\setminus \{0\}}$ the set $\ker \A_1[\xi] \times \ker \A_2[\xi]$ is given as follows. Let $Y_{\xi} \textcolor{magenta}{\subset} Y$ be defined as 
% \[
%     Y_{\xi} = \left \{ a \odot \xi \colon  a \in \R^d,~ a \perp \xi \right\},
% \]
% where $a \odot \xi = \frac 12\left(a \otimes \xi + \xi \otimes a\right) $ is the symmetric tensor product. Note that $Y_{\xi}$ is a $(d-1)$-dimensional subspace of $Y$. Then 
% \[
%     \ker \A_1[\xi] = Y_{\xi},
% \] 
% meaning that the space dimension of $\ker \A_1[\xi]$ is $(d-1)$ and 
% \[
%     \ker \A_2[\xi] = \left\{ (\tigma,\pi_{\tigma}) \colon \tigma \in Y_{\xi}^{\perp} \right\},
% \]
% where $\pi_{\tigma}$ is defined as the unique $\pi \in \R$, such that $\A_2[\xi](\tigma,\pi) =0$, i.e. \[
%     \pi_{\tigma} = \frac{\xi^T \tigma  \xi}{\vert \xi \vert^2}.
% \]
% \end{example}

%-----------------------------------------------
%-----------------------------------------------

\subsubsection{Fourier Symbols and Fourier Multipliers}
In this subsection, we recall some important facts about constant rank differential operators that are connected to the Fourier transform on the $d$-torus $\T_d$. 
As we can consider the constraint operators $\A_1$ and $\A_2$ separately, we assume $\A' \colon C^{\infty}(\R^d;\R^m) \to C^{\infty}(\R^d;\R^l)$ to be a constant coefficient differential operator of order $k_{\A'}$, i.e. 
\begin{equation} \label{def:A}
    \A' v= \sum_{\vert \alpha \vert = k_{\A'}} A_{\alpha} \partial_{\alpha} v.
\end{equation}
Analogously, we consider $\B'\colon C^{\infty}(\R^d;\R^h) \to C^{\infty}(\R^d;\R^m)$ a constant coefficient differential operator of order $k_{\B'}$.
% and $\B'$ also a constant coefficient differential operator of order $k_{\B'}$ mapping $C^{\infty}(\R^d,\R^h)$ into $C^{\infty}(\R^d,\R^m)$.
We call $\B'$ a \emph{potential of $\A'$}, whenever the corresponding Fourier symbols satisfy \begin{equation}\label{def:potential}
    \textup{Im} \B'[\xi] = \ker \A'[\xi],
    \quad 
    \text{ for all }\xi \in \R^d\setminus\{0\}.
\end{equation}
If $v \in C^{\infty}(\T_d;\R^m) \cap L_p(\T_d;\R^m)$, $1 \leq p < \infty$, we may write 
\[
    v(x) = \sum_{\xi \in \Z^d} \hat{v}(\xi) e^{-2\pi i \xi \cdot x}
    \quad 
    \text{and}
    \quad
    \hat{v}(\xi) := \int_{\T_d} v(x) e^{-2 \pi i \xi \cdot x } \dd x.
\]
For such $v$ and $\mathbb{W} \colon \R^d \setminus \{0\} \to \Lin (\R^m;\R^l)$, we may define a linear operator $W$ on $C^{\infty}(\T_d;\R^m) \cap L_p(\T_d;\R^m),\ 1 \leq p < \infty$,
%$C^{\infty} \cap L_p$ 
by 
\[
    W (v) (x) = \sum_{\xi \in \Z^d} \mathbb{W}(\xi)(\hat{v}(\xi))  e^{-2\pi i \xi \cdot x},
\]
such that $W(v) \colon \T_d \to \R^l$. If $W$ maps boundedly into some function space, $W(v)$ can be defined for general $v \in L_p(\T_d;\R^m)$, $1 \leq p < \infty$, by using density. Such an operator $W$ is called \textit{Fourier multiplier}. The algebraic identity \eqref{def:potential} in combination with standard Fourier multiplier theory leads to the following statements. 

\begin{proposition}[\cite{Raita}] \label{proposition:potential}
Let $\A' \colon C^{\infty}(\R^d;\R^m) \to C^{\infty}(\R^d;\R^l)$ be a differential operator as in \eqref{def:A}. Then the following holds true:
\begin{enumerate}[label=(\roman*)]
    \item $\A'$ satisfies the constant rank property if and only if there exists a potential $\B' \colon C^{\infty}(\R^d;\R^h) \to C^{\infty}(\R^d;\R^m)$ of $\A'$.
    \item If $\B'$ is a potential of $\A'$, there exists a Fourier multiplier operator $\B'^{-1} \colon L_q(\T_d;\R^m) \to W^{k_{\B'}}_q(\T_d;\R^h)$ of order $-k_{\B'}$, such that for any $1<q< \infty $ we have
    \[
        \Vert \B' \circ \B'^{-1} v - v \Vert_{L_q} \leq C_q \Vert \A' v \Vert_{W^{k_{\A'}}_q} + \hat{v}(0),
    \]
    for some positive constant $C_q > 0$ that does only depend on $q$.
\end{enumerate}
\end{proposition}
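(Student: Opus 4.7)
The plan is to treat the two items separately via Fourier analysis at the symbol level, and then transfer to $L_q$ using classical multiplier theory.

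For the equivalence in (i), the easier direction is ``potential implies constant rank''. Assuming $\B'$ is a potential, one has $\ker \A'[\xi] = \mathrm{Im}\,\B'[\xi]$ for all $\xi \neq 0$. By general results on polynomial matrices, the rank of a polynomial-matrix-valued map $\xi \mapsto \B'[\xi]$ is lower semicontinuous and attains its maximum on a Zariski open set. The same applies to $\A'[\xi]$ and yields upper semicontinuity of $\dim \ker \A'[\xi]$. Combining the two (both quantities agree and both are semicontinuous in opposite directions) forces $\dim \ker \A'[\xi]$ to be constant on $\R^d\setminus\{0\}$. The converse direction (constant rank implies existence of a potential) is the genuinely hard part and is the main obstacle of the proposition: one must parametrise globally the family of subspaces $\{\ker \A'[\xi]\}_{\xi\neq 0}$ by a homogeneous polynomial matrix $\B'[\xi]$. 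I would follow Raita's algebraic-geometric approach: choose a large enough degree $k_{\B'}$, and use constant rank (plus a dimension-counting argument on the Grassmannian together with a suitable lifting from the projective setting) to construct the coefficients of $\B'$ so that its image is exactly $\ker\A'[\xi]$.

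For (ii), starting from the potential $\B'$ of (i), I would define the pseudo-inverse Fourier symbol
\begin{equation*}
    \mathbb{W}(\xi) = \B'[\xi]^{\dagger}, \qquad \xi \in \R^d \setminus\{0\},
\end{equation*}
extended by $0$ at the origin, where ${}^\dagger$ denotes the Moore--Penrose pseudo-inverse. By construction $\mathbb{W}(\xi)$ is homogeneous of degree $-k_{\B'}$ on $\R^d\setminus\{0\}$, $\B'[\xi]\mathbb{W}(\xi)$ is the orthogonal projection onto $\mathrm{Im}\,\B'[\xi] = \ker \A'[\xi]$, and $\mathrm{Id} - \B'[\xi]\mathbb{W}(\xi)$ is the orthogonal projection onto the orthogonal complement of $\ker \A'[\xi]$. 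On this complement $\A'[\xi]$ is an isomorphism onto its image, so I can write
\begin{equation*}
    \mathrm{Id} - \B'[\xi]\mathbb{W}(\xi) = \mathbb{V}(\xi)\,\A'[\xi]
\end{equation*}
for a homogeneous symbol $\mathbb{V}(\xi)$ of degree $-k_{\A'}$ (explicitly, $\mathbb{V}(\xi) = \A'[\xi]^\dagger$ composed with the said projection). Both $\mathbb{W}$ and $\mathbb{V}$ are smooth on $\R^d\setminus\{0\}$ by the constant rank property (the pseudo-inverse depends smoothly on a matrix of constant rank) and homogeneous of negative orders.

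Passing to $L_q$ on the torus, the symbols define bounded operators $\B'^{-1} \colon L_q(\T_d;\R^m) \to W^{k_{\B'}}_q(\T_d;\R^h)$ and $V \colon W^{-k_{\A'}}_q(\T_d;\R^l) \to L_q(\T_d;\R^m)$ via Mihlin's multiplier theorem, once we cut off the singularity at $\xi = 0$; this cutoff produces the zero-frequency contribution $\hat v(0)$ in the desired estimate. Applying the identity at the symbol level to $\hat{v}(\xi)$ for $\xi \neq 0$ and using Plancherel/Mihlin, we obtain
\begin{equation*}
    \Vert \B'\circ \B'^{-1} v - v \Vert_{L_q}
    = \Vert V(\A' v) \Vert_{L_q} + |\hat v(0)|
    \leq C_q \Vert \A' v \Vert_{W^{-k_{\A'}}_q} + |\hat v(0)|.
\end{equation*}
The main obstacles are thus the algebraic construction of the potential in (i) and verifying the Mihlin conditions (smoothness and the correct homogeneity bounds on derivatives) for $\mathbb{W}$ and $\mathbb{V}$, which both rely crucially on the constant rank property.
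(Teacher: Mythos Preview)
The paper does not give its own proof of this proposition; it is quoted from \cite{Raita} without argument. Your proposal is correct and is essentially the standard route taken in that reference: the pseudo-inverse construction $\B'[\xi]^\dagger$ for the multiplier, smoothness of the Moore--Penrose inverse under constant rank, and Mihlin's theorem on the torus (with the zero mode separated out) are exactly the ingredients used there, and your identification of the algebraic construction of the potential as the nontrivial part of (i) is accurate. One minor remark: the exponent in the paper's displayed estimate should read $W^{-k_{\A'}}_q$ rather than $W^{k_{\A'}}_q$, as you implicitly correct in your final line.
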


%--------------------------------------------
%--------------------------------------------
For weakly, but not strongly, convergent sequences on bounded sets, there are essentially two possible effects. There can be oscillations and concentrations. For weak lower-semicontinuity results, oscillations are much easier to handle than concentrations. The notion of $p$-equi-integrability prevents concentration:

%--------------------------------------------
%--------------------------------------------

\begin{definition} A set $X \subset L_p(\T_d;\R^m)$ is called \emph{$p$-equi-integrable} if 
\[
    \lim_{\delta \to 0} \sup_{v \in X} \sup_{\vert E \vert <\delta} \int_E \vert v \vert^p \dd x =0.
\]
\end{definition}

%--------------------------------------------
%--------------------------------------------

\begin{lemma} \label{lemma:multiplier} 
Let $W \colon C^{\infty}(\T_d;\R^m) \to C^{\infty}(\T_d;\R^m)$ be a $0$-homogeneous Fourier multiplier. Then, for any $1<p<\infty$ the following holds true.
\begin{enumerate} [label=(\roman*)]
    \item \label{lemma:multiplier1} $W \colon L_p(\T_d;\R^m) \to L_p(\T_d;\R^m)$ is bounded;
    \item \label{lemma:multiplier2} $W$ is continuous from $L_p(\T_d;\R^m)$ to $L_p(\T_d;\R^m)$ with respect to the weak topology of $L_p(\T_d;\R^m)$;
    \item \label{lemma:multiplier3}If $X \subset L_p(\T_d;\R^m)$ is a $p$-equi-integrable and bounded set, then $W(X)$ is also $p$-equi-integrable.
\end{enumerate}
\end{lemma}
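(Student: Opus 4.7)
\textbf{Plan of proof for Lemma \ref{lemma:multiplier}.}

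For part \ref{lemma:multiplier1}, the plan is to invoke the Marcinkiewicz/Mikhlin multiplier theorem in its periodic formulation. A $0$-homogeneous symbol $\mathbb{W}\colon \R^d\setminus\{0\}\to \Lin(\R^m;\R^m)$ automatically satisfies the H\"ormander--Mikhlin estimate $|\partial^\alpha \mathbb{W}(\xi)|\le C_\alpha |\xi|^{-|\alpha|}$ for all multi-indices $\alpha$ with $|\alpha|\le \lfloor d/2\rfloor +1$ (differentiation of a $0$-homogeneous function lowers the homogeneity by one). The classical multiplier theorem then yields boundedness $W\colon L_p(\T_d;\R^m)\to L_p(\T_d;\R^m)$ for every $1<p<\infty$.

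Part \ref{lemma:multiplier2} is immediate once \ref{lemma:multiplier1} is established: any bounded linear operator between reflexive Banach spaces is weak-to-weak sequentially continuous. Concretely, if $v_n \rightharpoonup v$ in $L_p$, then for any test function $\varphi \in L_{p'}$ (with $1/p+1/p'=1$), applying Plancherel or just duality yields $\dpr{Wv_n,\varphi}=\dpr{v_n,W^\ast\varphi}\to \dpr{v,W^\ast\varphi}=\dpr{Wv,\varphi}$, so $Wv_n\rightharpoonup Wv$. (The adjoint $W^\ast$ is itself a $0$-homogeneous Fourier multiplier with symbol $\mathbb{W}(\xi)^\ast$, hence bounded on $L_{p'}$ by part \ref{lemma:multiplier1}.)

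For part \ref{lemma:multiplier3}, which I expect to be the main obstacle, the plan is a truncation argument that exploits the fact that $W$ is bounded on every $L_s$ with $1<s<\infty$, not only on $L_p$. Fix some $s>p$. Given $X \subset L_p(\T_d;\R^m)$ bounded and $p$-equi-integrable, and given $\varepsilon > 0$, I would proceed as follows. For each $v \in X$ and each threshold $K>0$, split $v = v_K^{(1)} + v_K^{(2)}$ with $v_K^{(1)} = v\,\ONE_{\{|v|\le K\}}$ and $v_K^{(2)} = v\,\ONE_{\{|v|>K\}}$. Then $\|v_K^{(1)}\|_{L_s}^s \le K^{s-p}\|v\|_{L_p}^p$ is uniformly bounded in $v\in X$ for fixed $K$, while the $p$-equi-integrability of $X$ yields, via the characterization $\lim_{K\to\infty}\sup_{v\in X}\int_{\{|v|>K\}}|v|^p\dd x = 0$, that $\|v_K^{(2)}\|_{L_p}\to 0$ uniformly in $v\in X$. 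Applying the two boundedness estimates from \ref{lemma:multiplier1} on $L_s$ and on $L_p$ separately, together with H\"older's inequality on the measurable set $E$ of small measure, gives
\begin{equation*}
\int_E |Wv|^p \dd x \le 2^{p-1}|E|^{1-p/s}\|W v_K^{(1)}\|_{L_s}^p + 2^{p-1}\|W v_K^{(2)}\|_{L_p}^p.
\end{equation*}
Choosing $K$ so large that the second summand is below $\varepsilon/2$ uniformly in $v\in X$, and then $|E|$ so small that the first summand is below $\varepsilon/2$ uniformly in $v\in X$, establishes $p$-equi-integrability of $W(X)$.

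The delicate point here is the interplay between the threshold $K$ (which must be chosen \emph{before} $|E|$) and the use of the $L_s$-boundedness of $W$ on the truncated piece; one must not mix quantifiers. Once the two-scale decomposition is in place, the proof is a clean consequence of the full range $1<s<\infty$ of boundedness provided by part \ref{lemma:multiplier1}.
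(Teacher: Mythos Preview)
Your proof is correct and follows essentially the same approach as the paper: parts \ref{lemma:multiplier1} and \ref{lemma:multiplier2} are handled identically via Mikhlin--H\"ormander and duality, and for part \ref{lemma:multiplier3} the paper also uses a truncation $\tau_a(v)=v\ONE_{\{|v|<a\}}$, bounds $W(\tau_a\circ v)$ in a higher Lebesgue space (they take $L_{2p}$ where you take a generic $L_s$, $s>p$), and controls the tail $W(v-\tau_a\circ v)$ in $L_p$ uniformly via the $p$-equi-integrability of $X$. Your use of H\"older directly on the set $E$ is slightly more streamlined than the paper's intermediate observation that bounded subsets of $L_{2p}$ are $p$-equi-integrable, but the underlying mechanism is the same.
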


%--------------------------------------------
%--------------------------------------------

\begin{proof}
\textbf{\ref{lemma:multiplier1}} Part (i) follows from the Mikhlin--H\"ormander multiplier theorem (e.g.\cite{FM,Grafakos}). 

\noindent\textbf{\ref{lemma:multiplier2}} This follows from the fact that the adjoint operator $W^{\ast}$ is bounded from $L_{p'}(\T_d;\R^m)$ to $L_{p'}(\T_d;\R^m)$.

\noindent\textbf{\ref{lemma:multiplier3}} In order to verify the  $p$-equi-integrability of $W(X)$, we follow the the lines of the proof of \cite[Lemma 2.17]{FM}.
% For the $p$-equi-integrability \ref{lemma:multiplier3} we follow the the lines of the proof of \cite{FM}[Lemma 2.17]. 

\noindent\textbf{Step 1: Construction of a truncated sequence. } There exists $R>0$ and for all $\varepsilon >0$ there exists a $\delta>0$, such that we have
\[
    \sup_{v \in X} \Vert v \Vert_{L_p}^p < R
    \quad 
    \text{and}
    \quad
    \sup_{v \in X} \sup_{\vert E \vert < \delta} \int_E \vert v \vert^p \dd x < \varepsilon.
\]
For $a > 0$ consider the function  $\tau_a \colon \R^m \to \R^m$, defined by
\[
    \tau_{a}(z) = 
    \begin{cases} 
    z, & \vert z \vert < a 
    \\
    0, & \vert z \vert \geq a.
\end{cases}
\]
% Then for fixed $a >0$, \note{the composition} $\tau_a\circ u$ is uniformly bounded in \note{$L_{\infty}(\T_d;\R^m)$} for $u \in X$.
Then, for fixed $a >0$ and  $u \in X$, the set $\{\tau_a\circ u:u\in X\}$ is bounded in $L_{\infty}(\T_d;\R^m)$.
Therefore, by \ref{lemma:multiplier1}, the set $\{W(\tau_a\circ u):u \in X\}$ is bounded in $L_r(\T_d;\R^m)$ for $r \geq p$. \\

\noindent \textbf{Step 2: $p$-equi-integrability of the truncated sequence. } We show that, for fixed $a \in \N$, the set $\{ W(\tau_a \circ u ) \}_{u \in X}$ is $p$-equi-integrable. \\
Taking Step 1 into account, this follows from the fact that any bounded set $X' \subset L_{2p}(\T_d;\R^m)$ is already $p$-equi-integrable. To prove this, assume for contradiction that there exists a bounded set $X'\subset L_{2p}(\T_d;\R^m)$ that is not $p$-equi-integrable. Then there exist $u_n \subset X'$, $E_n \subset \T_d$ with $\vert E_n \vert \to 0$, as $n \to \infty$, and an $\varepsilon >0$, such that 
\[
    \int_{E_n} \vert u_n \vert^p \dd x > \varepsilon,
    \quad
    n \in \N.
\]
By Jensen's inequality this implies 
\[
    \vert E_n \vert \int_{E_n} \vert u_n \vert^{2p} \dd x  > \varepsilon^2,
\]
which contradicts the assumption that $u_n$ is bounded in $L_{2p}(\T_d;\R^m)$ and that $\vert E_n \vert \to 0$. \\

We conclude that for any $\varepsilon>0$ there is $\delta_a(\varepsilon)>0$, such that, for all $u \in X$ we have the implication
\begin{equation} \label{eq:equiint}
    \vert E \vert < \delta_a(\varepsilon) 
    \quad \Longrightarrow \quad 
    \int_{E} \vert W (\tau_a\circ u) \vert^p \dd x < \varepsilon.
\end{equation}

\noindent \textbf{Step 3: $p$-equi-integrability of $W(X)$. } We show that Step 2 together with $p$-equi-integrability of $X$ implies that $W(X)$ is $p$-equi-integrable. \\
Using $p$-equi-integrability and boundedness, we may estimate
\begin{align}\label{eq:Lp_app}
    \lim_{a \to \infty} \sup_{u \in X} \Vert u - \tau_{a}\circ u \Vert_{L_p}^p  
    &\leq 
    \lim_{a \to \infty} \int_{\{ u \geq a \}} \vert u \vert^p \dd x
    \leq 
    \lim_{a \to \infty} \sup_{E \colon \vert E \vert < R a^{-p}} \int_E \vert u \vert^p
    \dd x
    =0.
\end{align} 
Therefore, we find that
\begin{align}\label{eq:W_tau}
    \lim_{a \to \infty} \sup_{u \in X} \Vert W (u-\tau_a\circ u) \Vert_{L_p} \leq C \lim_{a \to \infty} \sup_{u \in X} \Vert u - \tau_a\circ u \Vert_{L_p} =0.
\end{align}
Let now $\varepsilon>0$. By \eqref{eq:W_tau}, there exists $a(\varepsilon) \in \R_{+}$, such that 
\[
    \sup_{u\in X}  \Vert W (u-\tau_{a(\eps)}\circ u) \Vert_{L_p} < \tfrac{\varepsilon}{2}.
\]
In combination with \eqref{eq:equiint}, for all sets $E$ with measure smaller than $\delta_{a(\varepsilon)}(\varepsilon/2)$ , this yields
\[
    \int_{E} \vert W u \vert^p \dd x 
    \leq 
    \int_{E} \vert W (\tau_{a(\varepsilon)}\circ u )\vert^p \dd x 
    + 
    \int_{E \cap \{ \vert  u \vert \geq a\}} \vert W(u - \tau_{a(\varepsilon)}\circ u) \vert^p \dd x
    < 
    \tfrac{\varepsilon}{2} + \tfrac{\varepsilon}{2} = \varepsilon.
\]
Therefore, the set $W(X)$ is $p$-equi-integrable.
\end{proof}

%-------------------------------------------
%-------------------------------------------

\subsection{The differential operator $\A$ for problems in fluid mechanics} \label{sec:diff:fluids} 
In this section, we discuss how the fluid mechanical constraints \eqref{intro:linear_C} and \eqref{eq:nonlinear_C} fit into the previously outlined abstract setting. We consider the two differential operators 
\begin{equation*}
    \begin{cases}
        \A_1 =\curl \curl^T \colon C^{\infty}(\T_d;Y) \to C^{\infty}(\T_d;(\R^d)^{\otimes 4}) &
        \\
        \A_2 = \diverg \colon C^{\infty}(\T_d; Y)\times C^{\infty}(\T_d; \R) \to C^{\infty}(\T_d;\R^d) &
    \end{cases}
\end{equation*} 
as follows
\begin{equation*}
    \begin{cases}
        \left(\curl \curl^T(\epsilon)\right)_{ijkl} 
        = \partial_{ij} \epsilon_{kl} + \partial_{kl} \epsilon_{ij} - \partial_{il} \epsilon_{kj} - \partial_{kj} \epsilon_{il}, & 
        i,j,k,l=1,...,d 
        \\
        \left(\diverg(\tigma,\pi)\right)_i 
        = 
        (\diverg(\tigma - \pi \id))_i = \sum_{j=1}^d \partial_j (\tigma - \pi \id)_{ij}, 
        &
        i=1,...,d.
    \end{cases}
\end{equation*}
The Fourier symbol of the differential operator $\A_1$ is given by 
\[
  \left(\A_1[\xi](\epsilon)\right)_{ijkl} 
  = 
  \xi_{i} \xi_j \epsilon_{kl} +\xi_{k} \xi_l \epsilon_{ij} - \xi_i \xi_l \epsilon_{kj} -\xi_k \xi_j \epsilon_{il}, \quad 
  \xi \in \R^d\setminus \{0\},~\epsilon \in Y,~i,j,k,l=1,\dots,d.
\]
For $\A_2$ the Fourier symbol reads  
\[
    \left(\A_2[\xi](\tigma,\pi)\right)_i 
    = 
    \sum_{j=1}^d \xi_j \tigma_{ij} - \xi_i \pi, 
    \quad 
    \xi \in \R^d\setminus \{0\},~(\tigma,\pi) \in Y \times \R,~i=1,\dots,d.
\]
For a fixed $\xi \in \R^d \setminus \{0\}$, the set $\ker \A_1[\xi] \times \ker \A_2[\xi]$  is given as follows. Let $Y_{\xi} \subset Y$ be defined as 
\[
    Y_{\xi} = \left \{ a \odot \xi \colon  a \in \R^d,~ a \perp \xi \right\},
\]
where $a \odot \xi = \frac 12\left(a \otimes \xi + \xi \otimes a\right) $ is the symmetric tensor product. Note that $Y_{\xi}$ is a $(d-1)$-dimensional subspace of $Y$. Then 
\[
    \ker \A_1[\xi] = Y_{\xi},
\] 
meaning that the space dimension of $\ker \A_1[\xi]$ is $(d-1)$ and 
\[
    \ker \A_2[\xi] = \left\{ (\tigma,\pi_{\tigma}) \colon \tigma \in Y_{\xi}^{\perp} \right\},
\]
where $\pi_{\tigma}$ is defined as the unique $\pi \in \R$, such that $\A_2[\xi](\tigma,\pi) =0$, i.e. \[
    \pi_{\tigma} = \frac{\xi^T \tigma  \xi}{\vert \xi \vert^2}.
\]
The differential condition $\curl \curl^T \epsilon=0$ for $\epsilon \in L_p(\T_d;Y)$ with $\int_{T_d} \epsilon \dd x=0$ encodes that $\epsilon$ is a symmetric gradient, i.e. there is $u \in W^1_p(\T_d;\R^d)$ satisfying 
\[
    \Vert u \Vert_{W^1_p} 
    \leq 
    C \Vert \epsilon \Vert_{L_p},
    \quad 
    \epsilon = \tfrac 12\bra{\nabla u + \nabla u^T} 
    \quad 
    \text{and}
    \quad
    \diverg u=0.
\]
The differential operator 
\begin{equation*}
    \B_1 \colon C^{\infty}(\T_d;\R^d) \cap \ker \diverg \longrightarrow C^{\infty}(\T_d;Y) \colon u\longmapsto \tfrac 12\bra{\nabla u + \nabla u^T}
\end{equation*}
can be treated as if it was a potential of $\A_1$.

\begin{remark}
    Due to the additional constraint $\diverg u=0$, $\B_1$ is \emph{not} a potential to $\A_1$ in the sense of \eqref{def:potential}. In particular, Proposition \ref{proposition:potential} cannot be applied directly. Note, however that a function $u \in W^1_p(\T_d;\R^d)$ with zero average satisfies the differential constraint $\diverg u=0$ if and only if \[
    u = \curl^{\ast} U
    \]
    for a suitable function $U \in W^{2,p}\left(\T_d;\R^{d \times d}_{\mathrm{skew}}\right)$, where $\curl^{\ast}$ is the adjoint of $\curl$; in other words $\curl^{\ast}$ is a potential of $\diverg$. In particular, this also means that if $\epsilon = \tfrac 12\bra{\nabla u + \nabla u^T}$, then there exists $U \in W^2_p\left(\T_d;\R^{d \times d}_{\mathrm{skew}}\right)$ such that 
    \[
    \epsilon = \tfrac 12\left(\nabla+\nabla^T\right) \circ \curl^{\ast} U.
    \]
    Consequently, $\tilde \B_1 =  \tfrac 12\left(\nabla+\nabla^T\right) \circ \curl^{\ast}$ is a potential of $\A_1$.
    
    \medskip
    
    For the purpose of applying Fourier methods, we can use the symmetric gradient $\B_1$ on divergence-free matrices instead of the true potential. The suitable inverse of $\B_1$ in the Fourier space is \[
    \B_1^{-1} = \curl^{\ast} \circ \tilde \B_1,
    \]
    which is a Fourier multiplier of order $1+(-2)=-1$.
\end{remark}

The potential to the differential operator $\A_2$ is not relevant in this setting. Let us remark that the condition \[
    -\diverg \tigma + \nabla \pi = f,
\]
for $(\tigma,\pi) \in L_q(\T_d;Y \times \R)$ and $f \in W^{-1,p}(\T_d;\R^d)$, can be rewritten in terms of $\tigma$ only, as 
\[
    -\curl \circ \diverg \tigma = \curl f.
\]
Another strategy to tackle the linear problem from a ''purely`` Fourier analytic perspective would be to ''forget`` about the pressure $\pi$ by using the operator $\tilde{\A}_2(\tigma) = \curl \circ \diverg \tigma$. Note that in this approach the operator $\curl \circ \diverg$ acting on $\tigma$ is the adjoint operator of $\tfrac 12 \left(\nabla+\nabla^T\right) \circ \curl^{\ast}$ which acts on $U$. For the non-linear problem, cf. Subsection \ref{subsec:semilin}, this approach yields the equation 
\begin{equation}
    -\curl \diverg \tigma = \curl f - \curl( u \cdot \nabla) u.
\end{equation}
We believe however, that from the fluid dynamical point of view it is more instructive to include the pressure $\pi \in L_q(\Omega)$ by sticking to the more physical equation 
\[
    -\diverg \tigma= f - (u \cdot \nabla)u - \nabla \pi.
\]

%-----------------------------------------------
%-----------------------------------------------
%-----------------------------------------------

\section{Existence of minimisers -- Weak Lower-Semicontinuity and Coercivity} \label{sec:wlsc}

%-----------------------------------------------
%-----------------------------------------------

It is the structure of the differential constraints, with constant rank operators of different order, the quasilinear perturbation of the otherwise linear constraints, the boundary conditions, and the natural location of $\epsilon$ and $\tigma$ in different spaces, that necessitates the following Section~\ref{sec:wlsc}, where all these challenges are adressed in an abstract setting.

%-----------------------------------------------
%-----------------------------------------------

\subsection{$\A$-Quasiconvexity}
In order to study weak lower-semicontinuity results, we first introduce the notion of $\A$-quasiconvexity for a constant rank operator $\A=(\A_1,\A_2)$ as defined in the previous section.

%------------------------------------------------
%------------------------------------------------

\begin{definition}
A (measurable and locally bounded) function $\funct \colon \R^{m_1} \times \R^{m_2} \to \R$ is called \emph{$\A$-quasiconvex} if for all $z=(z_1,z_2) \in \R^{m_1} \times \R^{m_2}$ and for all test functions $\psi=(\psi_1,\psi_2) \in \test$ with 
\begin{equation} \label{def:test}
    \test
    = 
    \left \{
    \psi \in C^{\infty}(\T_d;\R^{m_1}\times \R^{m_2}) \colon \A \psi =0 \text{ and } \int_{\T_d} \psi\dd x=0 \right\},
\end{equation}
it holds that
\begin{equation} \label{def:Aqc}
    \funct(z) \leq \int_{\T_d} \funct(z+\psi(x)) \dd x.
\end{equation}
For $\funct \in C(\R^{m_1}\times\R^{m_2})$ we define the \emph{$\A$-quasiconvex envelope $\QA \funct$ of $\funct$} as \begin{equation} \label{def:envelope}
    \QA \funct (z) = \inf_{\psi \in \test} \int_{\T_d} \funct(z+\psi(x)) \dd x.
\end{equation}
$\funct$ is called \emph{$\Lambda_{\A}$-convex} if for all $z \in \R^{m_1}\times \R^{m_2}$ and all $w \in \Lambda_{\A}$ the function 
\[
    t \longmapsto \funct(z+tw) 
\]
is convex.
\end{definition}

%------------------------------------
%------------------------------------

Note that the \emph{$\A$-quasiconvex envelope} $\QA \funct$ of a continuous function $\funct$ is the largest $\A$-quasiconvex function smaller than $\funct$ \cite{FM}. Moreover, a function $\funct$ is $\A$-quasiconvex if and only if $\funct= \QA \funct$.

%------------------------------------
%------------------------------------

\begin{proposition}[Properties of $\A$-quasiconvex functions] \label{prop:FM}
Let $\A= (\A_1,\A_2)$ be a differential operator satisfying the constant rank property and the spanning property and let $\funct: \R^{m_1} \times \R^{m_2}\to \R$. Then the following holds true. \begin{enumerate}[label=(\roman*)]
    \item \label{prop:FM1} If $\funct$ is locally bounded and $\A$-quasiconvex, then $\funct$ is continuous;
    \item \label{prop:FM2} if $\funct$ is continuous, then $\QA \funct$ is $\A$-quasiconvex and for all $z\in \R^{m_1} \times \R^{m_2}$ it holds that
    \[
        \QA \funct (z) 
        = 
        \sup \{\gunct(z)\colon \gunct \text{ is } \A\text{-quasiconvex and } \gunct \leq \funct\};
    \]
    \item \label{prop:FM3} if $\funct$ is continuous and $\A$-quasiconvex, then $\funct$ is $\Lambda_{\A}$-convex;
    \item \label{prop:FM4}  \label{loc:Lipschitz} if $\funct$ is $\A$-quasiconvex, $1< p,q< \infty$ and for all $z\in \R^{m_1} \times \R^{m_2}$ it holds that
    \[
        \funct(z_1,z_2) \leq C\left(1+\vert z_1 \vert^p + \vert z_2 \vert^q \right),
    \]
    then $\funct$ is locally Lipschitz continuous and \begin{align*}
        \left \vert \funct(z_1,z_2) - \funct(w_1,w_2) \right \vert 
        \leq\ & 
        C\left(1+ \vert z_1 \vert^{p-1} +\vert w_1 \vert^{p-1} +\vert z_2 \vert^{\alpha} + \vert w_2 \vert^{\alpha} \right) \cdot \left \vert z_1-w_1 \right \vert \\
        & +  C\left(1+ \vert z_1 \vert^{\beta} +\vert w_1 \vert^{\beta} +\vert z_2 \vert^{q-1} + \vert w_2 \vert^{q-1} \right) \cdot \left \vert z_2- w_2 \right \vert,
    \end{align*}
    where $\alpha = (p-1)q/p$ and $\beta = (q-1)p/q$.
\end{enumerate}
\end{proposition}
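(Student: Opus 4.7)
The plan is to adapt the classical $\A$-quasiconvexity theory of Fonseca and Müller \cite{FM} to our setting, where $\A = (\A_1, \A_2)$ consists of two homogeneous operators of different orders $k_1, k_2$ acting on separate components. All four statements rest on the interplay between the constant rank property, the spanning property, and the construction of oscillating test functions valued along $\Lambda_{\A}$.

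I would begin with \textbf{(iii)}, as it feeds into the remaining parts. Given $z \in \R^{m_1}\times\R^{m_2}$ and $w=(w_1,w_2) \in \Lambda_{\A}$ with $w_i \in \ker \A_i[\xi]$ for a common $\xi \in \R^d \setminus \{0\}$, any smooth $1$-periodic zero-mean scalar $h$ yields an admissible plane-wave test function $\psi(x) = h(\xi \cdot x)\, w$, since $\A_i \psi = h^{(k_i)}(\xi \cdot x)\,\A_i[\xi]w_i = 0$ componentwise. Approximating by such $h$ the two-valued function taking values $s$ and $-t$ on sets of measures $t/(s+t)$ and $s/(s+t)$ and inserting into \eqref{def:Aqc} yields $\funct(z) \leq \tfrac{t}{s+t}\funct(z+sw) + \tfrac{s}{s+t}\funct(z-tw)$, that is, convexity of $t \mapsto \funct(z+tw)$.

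For \textbf{(i)} and \textbf{(iv)} I would leverage the $\Lambda_{\A}$-convexity from (iii) together with the spanning property. Picking a basis $\{w^{(1)},\ldots,w^{(N)}\} \subset \Lambda_{\A}$ of $\R^{m_1}\times\R^{m_2}$, one-dimensional convex-Lipschitz estimates along each $w^{(j)}$-line combine inductively to give joint local Lipschitz continuity, hence continuity in (i). For (iv), the same scheme refined with the $(p,q)$-growth bound produces the quantitative estimate: the convex scalar $\phi(t) = \funct(z+tw)$ satisfies a standard one-sided bound of the form $|\phi(t)-\phi(0)| \leq C\,|t|\bigl(1+|z|^{p-1}+|z+tw|^{p-1}+|z|^{q-1}+|z+tw|^{q-1}\bigr)$. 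Decomposing a generic displacement via two $\Lambda_{\A}$-directions, each primarily affecting one factor, and applying Young's inequality to convert the arising mixed terms yields the asymmetric form with the stated exponents $\alpha = (p-1)q/p$ and $\beta = (q-1)p/q$.

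For \textbf{(ii)}, I would establish $\A$-quasiconvexity of $\QA\funct$ via the standard diagonal concatenation: given $z$ and a test function $\tilde\psi$, partition $\T_d$ into small cubes $Q_j$; on each cube place, after rescaling by a large integer factor, a near-optimal test function for the envelope value at $z + \tilde\psi$ evaluated at the centre of $Q_j$; then use the constant rank property through Lemma \ref{lemma:multiplier} to project the concatenation back into $\ker \A$. Passing to the diagonal limit together with continuity of $\funct$ and $\QA\funct$ from (i) gives the inequality \eqref{def:Aqc} for $\QA\funct$. The supremum characterisation then follows immediately: for any $\A$-quasiconvex $\gunct \leq \funct$, plugging an arbitrary test function into the definition of $\A$-quasiconvexity for $\gunct$ and using monotonicity yields $\gunct(z) \leq \QA\funct(z)$, while $\QA\funct$ itself is $\A$-quasiconvex (just shown) and dominated by $\funct$ (take $\psi \equiv 0$). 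The main technical obstacle lies precisely in the concatenation step: reconciling the different orders $k_1 \neq k_2$ so that a single scaling produces a function simultaneously admissible for $\A_1$ and $\A_2$, while preserving periodicity and the correct mean on $\T_d$; this is where the decomposition of $\A$ into componentwise constant rank operators, together with the Fourier multiplier correction of Proposition \ref{proposition:potential}, becomes essential.
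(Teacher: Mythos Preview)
Your proposal is correct and matches the paper's approach: the paper does not give a detailed proof but simply states that (i)--(iii) are slight adaptations of \cite[Section 3]{FM} to the higher-order componentwise setting, that (iv) is a $(p,q)$-adaptation of \cite{JP,KK,RG}, and that the key mechanism throughout is $\Lambda_{\A}$-convexity --- precisely the structure you flesh out. One minor remark: the issue you flag at the end concerning different orders $k_1\neq k_2$ in the concatenation step for (ii) is not a genuine obstacle, since the operator acts componentwise and the rescaling $\psi\mapsto\psi(n\,\cdot)$ preserves $\A_i\psi_i=0$ separately for each $i$ regardless of $k_i$.
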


%-----------------------------
%-----------------------------

Statements \ref{prop:FM1}--\ref{prop:FM3} are slight adaptions of \cite[Section 3]{FM} for the case of first-order operators to the higher-order case. Statement \ref{prop:FM4} is a $(p,q)$-adaptation of \cite{JP,KK,RG}, where the $L_p$-setting is treated. The proof relies on the fact that any $\A$-quasiconvex function is $\Lambda_{\A}$-convex.

\subsection{Weak lower-semicontinuity under differential constraints.} 

Throughout this paragraph we consider $1<p,q<\infty$, a Carath\'{e}odory function $\funct \colon \Omega \times \R^{m_1} \times \R^{m_2} \to \R$ and functionals $I,J\colon L_p(\Omega;\R^{m_1}) \times L_q(\Omega;\R^{m_2}) \to \R$ defined by
\begin{equation} \label{def:G}
    J(v) = \int_{\Omega} \funct(x,v(x)) \dd x
    \quad \text{and}\quad 
    I(v) = 
    \begin{cases} 
    J(v), & \A v =0
    \\ 
    \infty, &\text{else},
\end{cases}
\end{equation}
%-----------------------------------------------
%-----------------------------------------------
The following proposition is a straight-forward adaption of the semi lower-continuity result \cite[Theorem 3.6]{FM} to the $(p,q)$-setting.

%-----------------------------------------------
%-----------------------------------------------

\begin{proposition} \label{proposition:wlsc}
Let $1<p,q<\infty$, let $\funct\colon \Omega \times \R^{m_1} \times \R^{m_2} \to \R$ be a  Carath\'{e}odory function, and assume that there exists $C>0$  such that the following growth condition is satisfied.
\begin{equation} \label{eq:pqgrowth}
    0 \leq \funct(x,z_1,z_2) \leq C (1+ \vert z_1 \vert^p + \vert z_2 \vert^q),
    \quad \text{for almost all}\quad
    x\in\Omega \quad \text{and all}\quad (z_1, z_2) \in \R^{m_1} \times \R^{m_2}.
\end{equation}
Moreover, let $\funct(x,\cdot)$ be $\A$-quasiconvex for a.e. $x \in \Omega$, where $\A=(\A_1,\A_2)$ is a constant rank operator with $\A_i$ having rank $k_i$. Then the following holds true.
\begin{enumerate}[label=(\roman*)]
    \item Along all sequences $v_n \rightharpoonup v$ in $L_p(\Omega;\R^{m_1}) \times L_q(\Omega;\R^{m_2})$ with $\A v_n \to \A v$ strongly in $W^{-k_1}_p(\Omega;\R^{m_1}) \times W^{-k_2}_q(\Omega;\R^{m_2})$ the functional $J$ is sequentially weakly lower-semicontinuous, i.e.
    \[
        J(v) \leq \liminf_{n \to \infty} J(v_n);
    \] 
    \item the functional $I$ is sequentially weakly lower-semicontinuous on $L_p(\Omega;\R^{m_1}) \times L_q(\Omega;\R^{m_2})$.
\end{enumerate}
\end{proposition}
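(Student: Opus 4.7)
The plan is to reduce part (ii) to part (i), and then prove (i) by adapting the blow-up and localization strategy of Fonseca--Müller (\cite[Thm.~3.6]{FM}) to the mixed $(p,q)$-setting, leveraging Proposition~\ref{prop:FM}~\ref{prop:FM4} for the quantitative estimates.

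First, (ii) follows readily from (i). The linear operator $\A \colon L_p(\Omega;\R^{m_1}) \times L_q(\Omega;\R^{m_2}) \to W^{-k_1}_p \times W^{-k_2}_q$ is bounded and therefore weak-to-weak continuous, so $v_n \rightharpoonup v$ with $\A v_n = 0$ forces $\A v = 0$. Consequently $\A v_n \to \A v$ strongly (both vanish), and applying (i) yields $I(v) = J(v) \leq \liminf_n J(v_n) = \liminf_n I(v_n)$.

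For (i), pass to a subsequence realizing the liminf and assume $\sup_n \norm{v_n}_{L_p \times L_q} < \infty$. I would proceed in three steps. \textbf{Step 1 (Equi-integrability).} Apply a $(p,q)$-version of the decomposition lemma separately to each component of $v_n = (\epsilon_n, \tigma_n)$ to produce $\tilde v_n = (\tilde\epsilon_n, \tilde\tigma_n)$ such that $(|\tilde \epsilon_n|^p)$ and $(|\tilde \tigma_n|^q)$ are equi-integrable, $\tilde v_n \rightharpoonup v$, and $|E_n| \to 0$ where $E_n = \{\tilde v_n \neq v_n\}$. Non-negativity of $\funct$ together with equi-integrability and the growth bound \eqref{eq:pqgrowth} give $\int_{E_n} \funct(x, \tilde v_n) \dd x \to 0$, so $J(v_n) \geq J(\tilde v_n) - \SmallO(1)$, reducing the inequality to $\tilde v_n$. \textbf{Step 2 (Reduction to $\A$-vanishing sequences).} Using the potentials for $\A_1$ and $\A_2$ furnished by Proposition~\ref{proposition:potential} and the fact that $\A v_n \to \A v$ strongly in $W^{-k_1}_p \times W^{-k_2}_q$, I project $\tilde v_n$ onto $\ker \A$ by subtracting a Fourier-multiplier correction. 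This produces $\bar v_n$ with $\A \bar v_n = 0$ and $\bar v_n - \tilde v_n \to 0$ strongly in $L_p \times L_q$, and by Lemma~\ref{lemma:multiplier}~\ref{lemma:multiplier3} the equi-integrability is preserved. The mixed Lipschitz estimate of Proposition~\ref{prop:FM}~\ref{prop:FM4} — whose exponents $\alpha = (p-1)q/p$ and $\beta = (q-1)p/q$ are tailored so that each error term is the product of a factor in $L_{p'}$ (resp.\ $L_{q'}$) and a factor converging to zero in $L_p$ (resp.\ $L_q$) by Hölder — then yields $|J(\bar v_n) - J(\tilde v_n)| \to 0$. \textbf{Step 3 (Blow-up).} For a.e.\ Lebesgue point $x_0$ of $v$ and of $x \mapsto \funct(x, v(x))$, rescale $\bar v_n - v(x_0)$ around $x_0$, extend periodically to the torus, and insert the resulting test functions (which lie in $\test$ after mollification) into the defining inequality \eqref{def:Aqc} of $\A$-quasiconvexity. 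Integrating via Fatou's lemma against the Radon--Nikodym derivative of the weak-$*$ limit of $\funct(\cdot, \bar v_n) \,\di x$ delivers $J(v) \leq \liminf_n J(\bar v_n) \leq \liminf_n J(v_n)$.

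The main obstacle is Step 2: the simultaneous requirement of preserving equi-integrability, restoring exact $\A$-freeness, and controlling the change in $J$ under mixed $(p,q)$-growth. The mixed Lipschitz exponents $\alpha, \beta$ in Proposition~\ref{prop:FM}~\ref{prop:FM4} are precisely what makes the Hölder pairing close up with the equi-integrability provided by Step 1; verifying that all cross terms converge to zero is the delicate point, but no new idea beyond the $L_p$-case is needed once this matching is observed.
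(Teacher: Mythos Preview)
Your proposal is correct and follows precisely the approach the paper has in mind: the paper does not give a proof but states that the result is ``a straight-forward adaption of the semi lower-continuity result \cite[Theorem 3.6]{FM} to the $(p,q)$-setting'', and then singles out Proposition~\ref{proposition:equiintegrablewlsc} (the $(p,q)$-equi-integrability reduction) as the key ingredient. Your three-step outline --- decomposition into equi-integrable sequences, Fourier-multiplier correction to restore the constraint, and blow-up via the $\A$-quasiconvexity inequality --- is exactly the Fonseca--M\"uller strategy, and your identification of Proposition~\ref{prop:FM}~\ref{prop:FM4} as the source of the mixed Lipschitz estimate needed for the cross terms is on point.
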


%--------------------------------------------
%--------------------------------------------

We do not provide the proof of Proposition~\ref{proposition:wlsc} here, since it is largely analogous to the proof of \cite[Theorem 3.6]{FM}, which is based on a suitable notion of equi-integrable sequences. In the $(p,q)$-setting, the right notion of equi-integrability is the following.

%--------------------------------------------
%--------------------------------------------

\begin{definition}
A set $X \subset L_p(\Omega;\R^{m_1}) \times L_q(\Omega;\R^{m_2})$ is called \emph{$(p,q)$-\textit{equi-integrable}}, if for all $\varepsilon >0$ there exists a $\delta >0$, such that \[
    E \text{ measureable },~\vert E \vert < \delta \quad \Longrightarrow \quad  \sup_{v \in X} \int_E \vert v_1 \vert^p + \vert v_2 \vert^q \dd x < \varepsilon,
\]
that is $\{ v_1 \}_{v \in X}$ and $\{ v_2 \}_{v \in X}$ are $p$-equi-integrable and $q$-equi-integrable, respectively.
\end{definition}

%--------------------------------------------
%--------------------------------------------

The key insight for Proposition \ref{proposition:wlsc} is that it suffices to consider $(p,q)$-equi-integrable sequences. 
This is the content of the following proposition which is again a straightforward adaption of the $p$-setting.
%--------------------------------------------
%--------------------------------------------

\begin{proposition} \label{proposition:equiintegrablewlsc}
Let $1<p,q < \infty$ and let $\funct \colon \Omega\times \R^{m_1} \times \R^{m_2} \to \R$ be a  Carath\'{e}odory function satisfying the growth condition \eqref{eq:pqgrowth}. Let $v_n \rightharpoonup v$ in $L_p(\Omega;\R^{m_1}) \times L_q(\Omega;\R^{m_2})$ and suppose that there is a $(p,q)$-equi-integrable sequence $w_n \subset L_p(\Omega;\R^{m_1}) \times L_q(\Omega;\R^{m_2})$ such that for some $\theta$ with $\max\left(1/p,1/q\right)<\theta<1$ it holds that
\[
    \Vert v_n -w_n \Vert_{L_{\theta p} \times L_{\theta q}}
    \longrightarrow 
    0.
\]
Then we have
\begin{equation*}
    \liminf_{n \to \infty} J(w_n) \leq \liminf_{n \to \infty} J(v_n).
\end{equation*}
\end{proposition}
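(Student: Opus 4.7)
The plan is to split $\Omega$ into a ``good'' set, where $v_n$ and $w_n$ are pointwise bounded and close and where the difference $\funct(x, w_n) - \funct(x, v_n)$ is controlled by continuity, and a ``bad'' set of small measure, whose contribution to $J(w_n)$ is controlled by the $(p,q)$-equi-integrability of $\{w_n\}$. First I would extract a subsequence realising $\liminf_n J(v_n)$ and, using that convergence in $L_{\theta p} \times L_{\theta q}$ implies convergence in measure, a further subsequence along which $v_n - w_n \to 0$ almost everywhere.

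Given $\varepsilon > 0$, the Scorza--Dragoni theorem produces a compact set $K_\varepsilon \subset \Omega$ with $|\Omega \setminus K_\varepsilon| < \varepsilon$ on which $\funct$ is jointly continuous. Weak convergence of $v_n$ in $L_p \times L_q$ together with Markov's inequality yields an $M$ with $|\{|v_n| > M\}| < \varepsilon$ uniformly in $n$, and for fixed $\delta > 0$, Markov applied to $\|v_n - w_n\|_{L_{\theta p} \times L_{\theta q}} \to 0$ gives $|\{|v_n - w_n| > \delta\}| < \varepsilon$ for $n$ large. Setting
\[
A_n = K_\varepsilon \cap \{|v_n| \leq M\} \cap \{|v_n - w_n| \leq \delta\},
\]
one has $|\Omega \setminus A_n| < 3\varepsilon$ and, on $A_n$, both $|v_n| \leq M$ and $|w_n| \leq M + \delta$. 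Uniform continuity of $\funct$ on the compact set $K_\varepsilon \times \overline{B_{M+\delta}}$ yields a modulus $\omega(\delta) \to 0$ with $|\funct(x, w_n) - \funct(x, v_n)| \leq \omega(\delta)$ on $A_n$, and the growth condition \eqref{eq:pqgrowth} combined with $(p,q)$-equi-integrability of $\{w_n\}$ gives, for $\varepsilon$ small enough,
\[
\int_{\Omega \setminus A_n} \funct(x, w_n) \dd x \leq C \int_{\Omega \setminus A_n} \bigl(1 + |w_{n,1}|^p + |w_{n,2}|^q\bigr) \dd x \leq \eta(\varepsilon),
\]
where $\eta(\varepsilon) \to 0$. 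Discarding the nonnegative integral of $\funct(x, v_n)$ over $\Omega \setminus A_n$ produces
\[
J(w_n) \leq J(v_n) + \omega(\delta) |\Omega| + \eta(\varepsilon).
\]
Taking $\liminf_n$ and then sending $\delta, \varepsilon \to 0$ concludes the argument.

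The main obstacle is the lack of joint continuity of the Carath\'eodory function $\funct$, which prevents a direct pointwise estimate of $|\funct(x, w_n) - \funct(x, v_n)|$ on a set where $|v_n|$ and $|w_n|$ are bounded; Scorza--Dragoni's theorem is the standard way around this. Beyond that, the argument is bookkeeping: the parameters $\varepsilon$, $M$, $\delta$ and the threshold for $n$ must be chosen in a consistent order so that all three ``bad'' sets have uniformly small measure before $(p,q)$-equi-integrability is invoked.
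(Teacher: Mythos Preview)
Your argument is correct and follows essentially the same decomposition as the paper: split $\Omega$ into a set where $v_n,w_n$ are pointwise bounded and close (control by continuity) and a complement of small measure (control by the growth bound plus $(p,q)$-equi-integrability of $w_n$, discarding the nonnegative contribution of $\funct(x,v_n)$ there).

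One small difference worth noting: the paper subsumes this proposition into the proof of Theorem~\ref{theorem:equiint}, where hypothesis \ref{theorem:equiint:hypo3} provides a modulus of continuity $\nu_R$ uniform in $x\in\Omega$, so no Scorza--Dragoni step is needed. Your use of Scorza--Dragoni is what makes the argument work for a general Carath\'eodory integrand as stated in the proposition, without the extra uniformity assumption \ref{theorem:equiint:hypo3}; this is the more complete version. Also, the sentence about passing to a further subsequence with $v_n-w_n\to 0$ a.e.\ is not actually used in your argument (you only use convergence in measure via Markov), so you can drop it.
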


%--------------------------------------------
%--------------------------------------------

The proof of Proposition \ref{proposition:equiintegrablewlsc} is contained in the proof of the following theorem.

%--------------------------------------------
%--------------------------------------------

\begin{theorem} \label{theorem:equiint}
Let $1<p,q < \infty$ and let $X \subset L_p(\Omega;\R^{m_1}) \times L_q(\Omega;\R^{m_2})$ be weakly closed. Moreover, let $\funct, \funct_n \colon \Omega \times \R^{m_1} \times \R^{m_2}$ be Carath\'eodory functions. We define the functionals $I^X_n, I^X \colon X \to \R$ as 
\[
    I_n^X(v)
    = 
    \begin{cases} 
    \int_{\Omega} \funct_n(x,v) \dd x, & v \in X 
    \\     
    \infty, & \text{else,} 
    \end{cases}
    \quad\text{and}\quad 
    I^X (v) 
    =
    \begin{cases}
    \int_{\Omega} 
    \funct(v) \dd x, & v \in X 
    \\ 
    \infty, & \text{else,}  
\end{cases}
\]
Suppose that $X$ satisfies the following condition: \begin{enumerate}[label=(H\arabic*)]
    \item \label{theorem:equiint:hypo1}
        For all bounded sequences $v_n \subset X$ there exists a $(p,q)$-equi-integrable sequence $w_n \subset X$, such that $w_n-v_n \to 0$ in measure.
\end{enumerate}
Suppose further that $\funct_n,\funct$ satisfy: \begin{enumerate}[resume,label=(H\arabic*)]
    \item \label{theorem:equiint:hypo2} there exists a constant $C>0$, such that for all $(z_1,z_2) \in \R^{m_1} \times \R^{m_2}$ and almost every $x \in \Omega$ we have \[
    0 \leq \funct_n(x,z_1,z_2),\funct(x,z_1,z_2) \leq C(1 + \vert z_1 \vert^p + \vert z_2 \vert^q);
    \]
    \item \label{theorem:equiint:hypo3} $\funct$ and $\funct_n$ are uniformly continuous on bounded sets of $ \R^{m_1} \times \R^{m_2}$, i.e. there exists a monotone function $\nu_R:[0,\infty)\to \R$ with $\nu_R(s)\to 0$ as $s\to 0$, such that for all $n\in \N$, all $z_1,z_2 \in \R^{m_1}\times \R^{m_2}$ with $\vert z_1 \vert, \vert z_2 \vert \leq R$, and for almost every $x \in \Omega$:
    \[
        \vert \funct_n(x,z_1)- \funct_n(x,z_2) \vert + \vert \funct(x,z_1) - \funct(x,z_2) \vert < \nu_R(\vert z_1-z_2 \vert);
    \]
    \item \label{theorem:equiint:hypo4} the functionals with integrands $\funct_n$ converge uniformly on equi-integrable subsets, i.e. for all equi-integrable sets $B \subset L_p(\Omega;\R^{m_1}) \times L_q(\Omega;\R^{m_1})$ and for all $\varepsilon>0$ there exists $n_{\varepsilon} \in \N$, such that for all $v \in B$ and all $n \geq n_{\varepsilon}$ it holds
    \[
        \left \vert \int_{\Omega} \funct_n(x,v(x)) - \funct(x,v(x)) \dd x \right \vert \leq \varepsilon.
    \]
\end{enumerate}
Then the functionals $I^X_n$ and $I^X$ enjoy the following properties:
\begin{enumerate}[label=(\roman*)]
    \item \label{theorem:equiint:1} for all sequences $v_n \rightharpoonup v$ in $X$, there is a sequence $w_n \rightharpoonup v$ in $X$ such that 
    \begin{equation*}
        \limsup_{n \to \infty} I^X_n(w_n) \leq \liminf_{n \to \infty} I^X(v_n);
    \end{equation*}
    \item \label{theorem:equiint:2} for all sequences $v_n \rightharpoonup v$ in $X$, there is a sequence $\bar{w}_n \rightharpoonup v$ in $X$ such that
    \begin{equation*}
        \limsup_{n \to \infty} I^X(\bar{w}_n) \leq\liminf_{n \to \infty} I^X_n(v_n);
    \end{equation*}
    \item \label{theorem:equiint:3} if the sequential $\Gamma$-limit of the constant sequence $I^X$ exists, then the sequential $\Gamma$-limit of $I^X_n$ exists and 
    \[
        \Gamma - \lim_{n \to \infty} I_n^X 
        = 
        \Gamma - \lim_{n \to \infty} I^X.
    \]
\end{enumerate}
\end{theorem}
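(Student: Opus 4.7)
The three claims are proved by combining the equi-integrable modification supplied by (H1) with the uniform-convergence hypothesis (H4) and the continuity/growth hypotheses (H2)-(H3). Part (iii) reduces to (i) and (ii) by definition chasing.

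\emph{Proof of (i).} Fix $v_n \rightharpoonup v$ in $X$ and set $L := \liminf_{n\to\infty} I^X(v_n)$. If $L = \infty$ any sequence $w_n \rightharpoonup v$ in $X$ works, so assume $L < \infty$. Passing to a subsequence (not relabeled), $I^X(v_n) \to L$; by weak convergence, $v_n$ is bounded in $L_p \times L_q$, so (H1) yields a $(p,q)$-equi-integrable $w_n \in X$ with $v_n - w_n \to 0$ in measure. Since $(v_n)$ is $L_p \times L_q$-bounded and convergence in measure plus $L_p$-boundedness implies $L_r$-convergence for $r<p$, we obtain $w_n$ bounded in $L_p \times L_q$ and $w_n \rightharpoonup v$. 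The core estimate is: for $M>0$, split $\Omega = A_n^M \cup B_n^M$ with $A_n^M = \{|v_n| \leq M, |w_n| \leq M\}$. Applying (H3) on $A_n^M$ and the growth (H2) on $B_n^M$, and using $|B_n^M| \leq C M^{-\min(p,q)}$ via Chebyshev uniformly in $n$, yields
\begin{equation*}
    I_n^X(w_n) \leq \int_{A_n^M} \funct_n(x,v_n) \dd x + \int_{A_n^M} \nu_M(|w_n-v_n|) \dd x + C \int_{B_n^M}\!\!\left(1+|w_n|^p+|w_n|^q\right) \dd x.
\end{equation*}
The second summand tends to $0$ as $n\to\infty$ (split by $\{|w_n-v_n|\le\delta\}$ and its complement, then let $\delta\to 0$); the third is dominated by $\varepsilon_M\to 0$ as $M\to\infty$ by $(p,q)$-equi-integrability of $\{w_n\}$.

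\emph{For the first summand,} observe that the truncated sequence $\tilde v_n := v_n \cdot \ONE_{A_n^M}$ is bounded in $L_\infty$, hence equi-integrable, so (H4) applied to $\{\tilde v_n\}$ gives
\begin{equation*}
    \int_{A_n^M} \funct_n(x,v_n) \dd x = \int_{A_n^M} \funct(x,v_n) \dd x + o_n(1) \leq I^X(v_n) + o_n(1).
\end{equation*}
Taking $\limsup_{n\to\infty}$ then $M\to\infty$ gives $\limsup_n I_n^X(w_n) \leq L$. A standard diagonal extraction yields a full sequence $w_n \rightharpoonup v$ in $X$ realizing this bound.

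\emph{Proof of (ii).} Completely symmetric. Set $L := \liminf I_n^X(v_n)$, pass to a realising subsequence, apply (H1) to obtain equi-integrable $\bar w_n \in X$. Using (H3) on $A_n^M$ to control $\funct(x, \bar w_n) - \funct(x, v_n)$, the growth bound (H2) on $B_n^M$ combined with equi-integrability of $\{\bar w_n\}$, and (H4) applied to the $L_\infty$-bounded sequence $\tilde v_n = v_n\cdot\ONE_{A_n^M}$ to swap $\funct$ for $\funct_n$, we derive $\limsup_n I^X(\bar w_n) \leq \lim_n I_n^X(v_n) = L$.

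\emph{Proof of (iii).} Let $I^\ast$ denote the sequential $\Gamma$-limit of the constant sequence $I^X$. For the $\liminf$-inequality, let $v_n \rightharpoonup v$; apply (ii) to produce $\bar w_n \rightharpoonup v$ with $\limsup I^X(\bar w_n) \le \liminf I_n^X(v_n)$. Then by definition of $I^\ast$,
\begin{equation*}
    I^\ast(v) \leq \liminf_n I^X(\bar w_n) \leq \limsup_n I^X(\bar w_n) \leq \liminf_n I_n^X(v_n).
\end{equation*}
For the $\limsup$-inequality, pick any $v$ and a recovery sequence $v_n \rightharpoonup v$ with $\limsup I^X(v_n) \leq I^\ast(v)$; apply (i) to obtain $w_n \rightharpoonup v$ with $\limsup I_n^X(w_n) \leq \liminf I^X(v_n) \leq I^\ast(v)$.

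\textbf{Main obstacle.} The delicate step is estimating the first summand in (i) (and its counterpart in (ii)): the sequence $\{v_n\}$ itself is merely $L_p \times L_q$-bounded, not equi-integrable, so (H4) cannot be applied directly to $v_n$. The trick is to truncate $v_n$ to the bounded set $A_n^M$, whereby the truncation becomes $L_\infty$-bounded and hence equi-integrable, enabling (H4); the remainder on $B_n^M$ is absorbed by equi-integrability of $\{w_n\}$ (not $\{v_n\}$). The subsequence-to-full-sequence diagonalisation and the need to realise a limsup bound rather than merely a liminf bound make the argument slightly more intricate than the proof of Proposition~\ref{proposition:equiintegrablewlsc}.
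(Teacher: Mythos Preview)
Your proof is correct and follows the same overall architecture as the paper: invoke (H1) to replace $v_n$ by an equi-integrable $w_n\in X$, then combine (H2)--(H4) with a bounded/unbounded decomposition to compare $I_n^X(w_n)$ and $I^X(v_n)$. Part (iii) is handled identically.

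The one genuine difference is \emph{where} you apply (H4). You swap $\funct_n$ for $\funct$ at the level of $v_n$, which forces you to first truncate to $\tilde v_n = v_n\ONE_{A_n^M}$ in order to manufacture an equi-integrable family; this is the ``main obstacle'' you flag. The paper instead swaps $\funct_n$ for $\funct$ at the level of $w_n$, writing
\[
I_n^X(w_n)-I^X(v_n) \;=\; \int_\Omega\bigl[\funct_n(x,w_n)-\funct(x,w_n)\bigr]\dd x \;+\; \int_\Omega\bigl[\funct(x,w_n)-\funct(x,v_n)\bigr]\dd x.
\]
Since $\{w_n\}$ is already $(p,q)$-equi-integrable by construction, (H4) applies directly to the first integral with no truncation, and the obstacle you identify simply does not arise. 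The second integral is then handled exactly as you do, via (H3) on $\{|v_n|,|w_n|\le R\}$ and (H2) plus equi-integrability of $w_n$ on the complement. So your route works but is slightly more circuitous; the paper's ordering makes the truncation step (and the small $O(|B_n^M|)$ discrepancy hidden in your $o_n(1)$ when passing from $\int_\Omega$ to $\int_{A_n^M}$) unnecessary.

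A minor point: your passage to a subsequence realising the $\liminf$ is more careful than the paper, which simply asserts $\limsup\bigl(I_n^X(w_n)-I^X(v_n)\bigr)\le 0$ and concludes; in the application to (iii) this suffices because the recovery sequence $v_n$ for $I^X$ actually satisfies $I^X(v_n)\to I^{X\ast}(v)$ (the limit exists by the $\Gamma$-liminf inequality), so no subsequence extraction is needed there.
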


%---------------------------------------
%---------------------------------------

Note that the constraint set $\Ccal$ in the fluid mechanical application is weakly closed and may thus play the role of the set $X$.

%---------------------------------------
%---------------------------------------

\begin{proof}
\noindent\textbf{\ref{theorem:equiint:1}} 
The main idea of the proof is to show that a suitable version of Proposition \ref{proposition:equiintegrablewlsc} holds, namely that sequences $w_n \subset X$ as in \ref{theorem:equiint:hypo1} already satisfy \ref{theorem:equiint:1}. To this end, let $v_n \subset X$ be bounded, and let $w_n \subset X$ be a $(p,q)$-equi-integrable sequence, such that $w_n - v_n \to 0$ in measure. Then we have
\begin{align*}
    \limsup_{n \to \infty}  I^X_n(w_n)- I^X(v_n) 
    &= 
    \int_{\Omega} \funct_n (x,w_n) -\funct (x,v_n) \dd x \\ 
    &\leq 
    \limsup_{n \to \infty} \int_{\Omega} \funct_n(x,w_n) - \funct(x,w_n) \dd x
    + 
    \limsup_{n \to \infty} \int_{\Omega}  \funct(x,w_n)-\funct(x,v_n) \dd x.
\end{align*}
Due to \ref{theorem:equiint:hypo4} and the $(p,q)$-equi-integrablility of $w_n$ the first term tends to $0$. In order to estimate the second term, let $L > 0$ be a constant such that $\|v_n\|_{L_p}, \|w_n\|_{L_p} \leq L$. Then, using \ref{theorem:equiint:hypo2}, for any $R>0$ we obtain
\begin{align*}
    &
    \int_{\Omega} \funct(x,w_n)-\funct(x,v_n) \dd x 
    \\
    &= 
    \int_{\{\vert w_n\vert, \vert v_n\vert \leq R\}}  \funct(x,w_n)-\funct(x,v_n) \dd x
    + 
    \int_{\{\vert w_n \vert 
    \geq R\} \cup \{\vert v_n \vert \geq R\}} \funct(x,w_n)-  \funct(x,v_n) \dd x 
    \\
    &\leq 
    \int_{\{\vert w_n\vert, \vert v_n \vert \leq R\}}  \nu_R \bigl( \vert w_n - v_n \vert \bigr) \dd x 
    + 
    \sup_{E \colon  \vert E \vert < 2 (L/R)^{\min(p,q)}} \int_{E}  C(1+ \vert w_{n,1} \vert^p + \vert w_{n,2} \vert^q) \dd x. 
\end{align*}
The first integral on the right-hand side of this inequality converges to $0$ as $n \to \infty$, since $w_n - v_n \to 0$ in measure by \ref{theorem:equiint:hypo1}. 
Moreover, since the sequence $w_n$ is $(p,q)$-equi-integrable, the second integral can be bounded by a constant $c_R$ with $c_R \to 0$ as $R \to \infty$. Consequently,
\[
    \limsup_{n \to \infty} \int \funct(x,w_n) - \funct(x,v_n) \dd x \leq 0
\]
and we conclude that
\begin{equation}
    \limsup_{n \to \infty} I^X_n(w_n) \leq \liminf_{n \to \infty}  I^X(v_n).
\end{equation}

\noindent\textbf{\ref{theorem:equiint:2}} The second statement  is obtained in the same way by swapping the roles of $\funct_n$ and $\funct$. Note that we can uniformly estimate
\[
    \int_{\{\vert w_n\vert, \vert v_n \vert \leq R\}}  \funct_n(x,w_n)-\funct_n(x,v_n) \dd x,
\]
as all $\funct_n$ have the same modulus of continuity on bounded sets, cf. \ref{theorem:equiint:hypo3}.
\medskip

\noindent\textbf{\ref{theorem:equiint:3}} If the sequential $\Gamma$-limit of $I^X$ exists (we denote it by $I^{X\ast}$), then for all $v \in X$ the following holds true. 
\begin{enumerate}
    \item[(a)] Every sequence $v_n \subset X$ with $v_n \rightharpoonup v$ in $X$ satisfies $I^{X\ast}(v) \leq \liminf_{n \to \infty} I^X (v_n)$.
    \item[(b)] There exists a sequence $v_n \subset X$ with $v_n \rightharpoonup v$ in $X$, such that $I^{X\ast}(v) \geq \limsup_{n \to \infty} I^X(v_n)$.
\end{enumerate}
The $\liminf$-inequality for $I_n^X$ is ensured by \ref{theorem:equiint:2}, i.e. if $v_n \rightharpoonup v$ in $X$, then 
\[
    \liminf_{n \to \infty} I^X_n(v_n) \geq \limsup_{n \to \infty} I^X(\bar w_n)\ge  \liminf_{n \to \infty} I^X(\bar w_n) \geq I^{X\ast}(v),
\]
as $\bar w_n \rightharpoonup v$ in $X$. On the other hand, the $\limsup$-inequality follows from \ref{theorem:equiint:1}: the recovery sequence $v_n$ (or at least a suitable subsequence) can be modified to an equi-integrable recovery sequence $w_n$. By \ref{theorem:equiint:1}, we find that
\[
    I^{X\ast}(v) \geq \limsup_{n \to \infty} I^X(v_n) \geq \liminf_{n \to \infty} I^X(v_n) \geq \limsup_{n \to \infty} I_n^X(w_n). 
\]
This completes the proof.
\end{proof}

The main challenge in applying Theorem \ref{theorem:equiint} to the case in which $X$ is a set given by differential constraints and boundary conditions is to verify Hypothesis \ref{theorem:equiint:hypo1}. In Section \ref{sec:dataconv} we check the conditions  \ref{theorem:equiint:hypo2}--\ref{theorem:equiint:hypo4} on the integrand $\funct$. To verify \ref{theorem:equiint:hypo1}, for a given sequence $v_n$ we need to construct a suitable $(p,q)$-equi-integrable modification $w_n$ that conserves both the differential constraints \emph{and} the boundary conditions. For this purpose we need the following two auxiliary results.

%---------------------------------------
%---------------------------------------

\begin{lemma}\label{lemma:diagonal}
Let $(X,d_X)$ be a complete metric space. Suppose that $x_n$ is a sequence in $X$, such that $x_n \to x$ and that, for $m \in \N$, we have ${x_{n,m}}$ with 
\[
    \lim_{m \to \infty} \sup_{n \in \N} d_X(x_{n,m},x_n) =0
    \quad 
    \text{and}
    \quad
    \lim_{n \to \infty} d_X(x_{n,m},x) =0 \quad \text{for all } m \in \N.
\]
Then $x_{n,m} \to x$ uniformly in $m$, as $n \to \infty$.
\end{lemma}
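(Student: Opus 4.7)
The plan is to establish uniform convergence in $m$ by splitting the index $m$ into a finite part and a tail and handling each via a different hypothesis, combined with the triangle inequality. Concretely, I want to show that for every $\varepsilon > 0$ there exists $N \in \N$ such that $d_X(x_{n,m},x) < \varepsilon$ for all $n \geq N$ and all $m \in \N$.

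First, I would fix $\varepsilon > 0$ and apply the first hypothesis to obtain $M \in \N$ with
\[
    \sup_{n \in \N} d_X(x_{n,m}, x_n) < \tfrac{\varepsilon}{2} \quad \text{for all } m \geq M.
\]
Since $x_n \to x$, there is $N_1$ with $d_X(x_n, x) < \varepsilon/2$ for $n \geq N_1$. The triangle inequality then yields, for all $m \geq M$ and $n \geq N_1$,
\[
    d_X(x_{n,m}, x) \leq d_X(x_{n,m}, x_n) + d_X(x_n, x) < \varepsilon.
\]
This takes care of the tail $m \geq M$.

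Next, for the finitely many indices $m \in \{1,\dots,M-1\}$, the second hypothesis gives, for each such $m$, an integer $N_m$ with $d_X(x_{n,m}, x) < \varepsilon$ whenever $n \geq N_m$. Setting $N_2 = \max\{N_m : 1 \leq m < M\}$ (a maximum over a finite set) handles this range. Finally, taking $N = \max(N_1, N_2)$ gives $d_X(x_{n,m},x) < \varepsilon$ for all $n \geq N$ and all $m \in \N$, which is the desired uniform convergence.

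The argument is essentially a standard finite/tail split, and I do not anticipate any real obstacle; note in particular that completeness of $X$ is never used, nor is it actually needed. The only subtlety is making sure that the tail bound is genuinely uniform in $n$, which is exactly what the first hypothesis $\lim_{m\to\infty} \sup_{n\in\N} d_X(x_{n,m}, x_n) = 0$ provides.
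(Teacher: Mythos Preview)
Your proof is correct and follows essentially the same approach as the paper: split into a tail $m \geq M$ handled via the uniform hypothesis plus the triangle inequality, and finitely many small $m$ handled individually via the second hypothesis, then take the maximum of the resulting thresholds. Your observation that completeness of $X$ is never used is also correct.
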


%---------------------------------------
%---------------------------------------

\begin{proof}
 Let $\varepsilon >0$. Then there exists $m_{\varepsilon} \in \N$, such that for all $m \geq m_{\varepsilon}$ 
\[
    d_X(x_{n,m},x_n) < \tfrac{\varepsilon}{2}
\]
and an $N_{\varepsilon}$, such that for all $n>N_{\varepsilon}$ we find that
\[
    d_X(x_n,x) < \tfrac{\varepsilon}{2}.
\]
Moreover, there are $N^1,...,N^{m_{\varepsilon}}$, such that for all $m=1,...,m_\varepsilon$ it holds
\[
    n > N^m\quad  \Longrightarrow \quad d_X(x_{n,m},x) < \varepsilon.
\]
Choosing $N = \max\{N_{\varepsilon}, N^1,...,N^{m_{\varepsilon}}\}$ yields that for any $n >N$ and $m \in \N$ we have
\[
    d(x_{n,m},x) < \varepsilon,
\]
which is the required uniform convergence.
\end{proof}

%---------------------------------------
%---------------------------------------

The following result is due to \cite[Lemma 2.15]{FM}. It allows to construct $(p,q)$-equi-integrable modified sequences. However, in general these modified sequences fail to conserve the constraints.

%---------------------------------------
%---------------------------------------

\begin{proposition} \label{proposition:equiint}
Let $v_n$ be a bounded sequence in $L_p(\Omega;\R^m)$. Then there exists a $p$-equi-integrable sequence $\tilde{v}_n$ with the following properties:
\begin{enumerate}[label=(\roman*)]
    \item for almost every $x \in \Omega$ we have $\vert \tilde{v}_n(x) \vert \leq \vert v_n(x) \vert$;
    \item for every $q<p$ we have $\lim_{n \to \infty} \Vert v_n - \tilde{v}_n \Vert_{L_q} =0$.
%\item The sequence $(\tilde{v}_n)_{n \in \N}$ is $p$-equi-integrable. 
\end{enumerate}
\end{proposition}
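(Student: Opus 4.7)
The strategy is a truncation argument. For a sequence $\lambda_n \to \infty$ to be chosen, set
\[
\tilde v_n(x) := v_n(x) \cdot \chi_{\{|v_n(x)| \leq \lambda_n\}}.
\]
Property (i) is immediate from the definition. For property (ii), Chebyshev's inequality first yields
\[
|\{|v_n|>\lambda_n\}| \leq C\lambda_n^{-p} \longrightarrow 0,
\]
where $C = \sup_n \|v_n\|_{L_p}^p$. Then, by H\"older's inequality applied on the superlevel set, for $q<p$,
\[
\|v_n - \tilde v_n\|_{L_q}^q
=
\int_{\{|v_n|>\lambda_n\}} |v_n|^q \dd x
\leq
|\{|v_n|>\lambda_n\}|^{1-q/p} \|v_n\|_{L_p}^q
\longrightarrow 0.
\]

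The main obstacle is to choose $\lambda_n$ so that $\{\tilde v_n\}$ is $p$-equi-integrable. The crude bound $|\tilde v_n| \leq \lambda_n$ yields only $\int_E |\tilde v_n|^p \dd x \leq \lambda_n^p |E|$, which is useless when $\lambda_n \to \infty$, since it does not provide a modulus of equi-integrability uniform in $n$. To overcome this, I would apply Chacon's biting lemma to the $L_1$-bounded non-negative sequence $\{|v_n|^p\}$: after passing to a subsequence (still denoted $v_n$), it yields a decreasing family of measurable sets $E_j \subset \Omega$ with $|E_j| \to 0$ such that, for each fixed $j$, the sequence $\{|v_n|^p \chi_{\Omega \setminus E_j}\}_n$ is $L_1$-equi-integrable, with some modulus $\eta_j(\delta)\to 0$ as $\delta\to 0$.

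The truncation level $\lambda_n$ is then selected simultaneously with an index $j(n) \to \infty$ such that the superlevel set $\{|v_n|>\lambda_n\}$ contains $E_{j(n)}$ up to a null set. This is feasible because both $|E_j|$ (as $j\to\infty$) and $|\{|v_n|>\lambda\}|$ (as $\lambda \to \infty$) tend to zero. The choice of $j(n)$ requires a diagonal extraction in the spirit of Lemma~\ref{lemma:diagonal}: $j(n)$ must grow slowly enough that the modulus $\eta_{j(n)}$ remains effective uniformly in $n$, since the $\eta_j$ deteriorate as $j$ increases. With this matching, $\tilde v_n$ is dominated pointwise by $|v_n| \chi_{\Omega \setminus E_{j(n)}}$, and equi-integrability of the $p$-th power is inherited from the biting decomposition.

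The key difficulty is thus the balance of the two competing constraints on $\lambda_n$: it must be large enough to secure $L_q$-closeness in (ii), yet adapted to the individual concentration profile of $v_n$ in order to excise the concentrating $L_p$-mass. Chacon's biting lemma localises these concentrations into the sets $E_j$, and the diagonal extraction glues the two asymptotic scales $n$ and $j$ together.
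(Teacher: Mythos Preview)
The paper does not prove this proposition; it merely cites \cite[Lemma~2.15]{FM}. So your proposal must be judged against the Fonseca--M\"uller argument, and there it has two genuine gaps.

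\textbf{First gap: the containment $\{|v_n|>\lambda_n\}\supset E_{j(n)}$ is not achievable.} The biting sets $E_j$ are arbitrary measurable sets, not superlevel sets of $|v_n|$. That both $|E_j|$ and $|\{|v_n|>\lambda\}|$ tend to zero says nothing about one containing the other. You cannot, in general, force a superlevel set of $|v_n|$ to swallow a prescribed biting set.

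\textbf{Second gap: even granting the containment, equi-integrability does not follow.} You claim the domination $|\tilde v_n|^p \le |v_n|^p \chi_{\Omega\setminus E_{j(n)}}$ inherits equi-integrability from the biting decomposition. But the biting lemma gives equi-integrability of $\{|v_n|^p\chi_{\Omega\setminus E_j}\}_n$ only for each \emph{fixed} $j$, and the moduli $\eta_j$ necessarily deteriorate as $j\to\infty$: since $E_j\downarrow\emptyset$, the dominating sequence increases to $|v_n|^p$ itself, which is \emph{not} equi-integrable. No slow growth of $j(n)$ can rescue this, because for $n$ large you are always comparing with $\eta_{j(n)}$ for $j(n)$ large, and the inclusion $E_{j(n)}\subset E_{j_0}$ goes the wrong way for domination by the good modulus $\eta_{j_0}$.

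\textbf{The correct argument} (Fonseca--M\"uller) dispenses with the biting lemma and works directly with the tail distribution $\mu_n(k)=\int_{\{|v_n|>k\}}|v_n|^p\dd x$. After passing to a subsequence so that $\mu_n(k)\to\alpha_k$ for every $k\in\N$ (note that the original statement in \cite{FM} is for a subsequence), one has $\alpha_k\downarrow\alpha_\infty\ge 0$. Then choose $\lambda_n\to\infty$ slowly enough that $\mu_n(\lambda_n)\to\alpha_\infty$. The key estimate
\[
\int_E |\tilde v_n|^p \dd x \;\le\; K^p\,|E| \;+\; \bigl(\mu_n(K)-\mu_n(\lambda_n)\bigr)
\]
yields equi-integrability, since for any $\varepsilon>0$ one first picks $K$ with $\alpha_K-\alpha_\infty<\varepsilon/2$, then $N$ so that $\mu_n(K)-\mu_n(\lambda_n)<\varepsilon$ for $n>N$, and finally $\delta$ small enough to handle $K^p|E|$ and the finitely many $n\le N$. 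The essential idea you are missing is that the truncation level must be tuned so that the \emph{tail mass} $\mu_n(\lambda_n)$ converges to the irreducible concentration $\alpha_\infty$; this is what makes $\mu_n(K)-\mu_n(\lambda_n)$ small uniformly in $n$ for large $K$.
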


%---------------------------------------
%---------------------------------------

The following theorem allows to obtain modified sequences that continue to satisfy both differential constraints and boundary conditions.

% \textcolor{Gump}{The crucial insight of Fonseca \& M\"uller and Pedregal \& Kinderlehrer\note{RS:citations} is that, in terms of weak lower-semicontinuity and relaxation, it suffices to consider $p$-equi-integrable sequences. In our case this is covered by Theorem \ref{theorem:equiint}. We now need to show that assumption \ref{theorem:equiint:hypo1} is satisfied for constraint sets $X$, which incorporate a differential constraint $\A v=0$ and a boundary constraint, e.g. $v = v_0$ on $\partial \Omega$. Indeed, the validity of \ref{theorem:equiint:hypo1} is shown \textit{without} boundary constraints by \textsc{Fonseca} and \textsc{M\"uller} in \cite{FM}[Lemma 2.15]. An extra argument is needed to account for the boundary values. %The construction with boundary values is similar, but slightly more involved.
% }
\begin{theorem}[Equi-integrable sequences \& boundary values]  \label{lemma:equiintboundary}
 Suppose that $\A \colon C^{\infty}(\R^d;\R^m) \to C^{\infty}(\R^d;\R^l)$ is a homogeneous differential operator of order $k_{\A}$, satisfying the constant rank property and that $\B$ is a potential of $\A$ in the sense of \eqref{def:potential}. Let $\Omega \subset \R^d$ be an open and bounded set with Lipschitz boundary. Let $v_n \rightharpoonup 0$ in $L_p(\Omega;\R^m)$ and $\A v_n \to 0$ in $W^{-k_{\A}}_p(\Omega;\R^l)$. Then there exists a sequence $w_n \subset W^{k_{\B}}_p(\Omega;\R^h)$ such that the following holds true:
 \begin{enumerate}[label=(\roman*)]
    \item the sequence $\sum_{j=0}^{k_{\B}} \vert \nabla^j w_n \vert$ is $p$-equi-integrable;
    \item $\Vert \B w_n - v_n \Vert_{L_q} \to 0$, as $n \to \infty$ for any $q<p$;
    \item $w_n$ is compactly supported in $\Omega$.
    \end{enumerate}
\end{theorem}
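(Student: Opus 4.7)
The plan is to transfer the problem to the torus $\T_d$, apply the scalar equi-integrability result (Proposition \ref{proposition:equiint}) to a potential of $v_n$, and then localise back to $\Omega$ by a smooth cutoff, patching the three approximations together via Lemma \ref{lemma:diagonal}.

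\emph{Step 1 (extension to the torus).} Embed $\Omega \subset (0,1)^d \subset \T_d$ and fix $\Omega \Subset \Omega' \Subset (0,1)^d$ with $\Omega'$ Lipschitz. A naive zero extension of $v_n$ to $\T_d$ will in general fail to satisfy $\A V_n \to 0$, because of spurious contributions along $\partial\Omega$. Instead, first invert $\B$ on $\Omega'$: using a localised version of Proposition \ref{proposition:potential}(ii), construct $u_n' \in W^{k_\B}_p(\Omega';\R^h)$ such that $\B u_n' - v_n \to 0$ strongly in $L_p(\Omega)$, exploiting the hypothesis $\A v_n \to 0$ in $W^{-k_\A}_p(\Omega)$. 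Next, extend $u_n'$ to $U_n \in W^{k_\B}_p(\T_d;\R^h)$ through a bounded Sobolev extension operator, and set $V_n := \B U_n \in L_p(\T_d;\R^m)$. Then $V_n$ is $\A$-free on the whole torus, approximates $v_n$ strongly in $L_p(\Omega)$, and satisfies $V_n \rightharpoonup 0$ in $L_p(\T_d;\R^m)$.

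\emph{Step 2 (equi-integrable potential).} By Lemma \ref{lemma:multiplier}(i), the family $(\nabla^j U_n)_{j=0,\ldots,k_\B}$ is bounded in $L_p(\T_d)$. Apply Proposition \ref{proposition:equiint} to $\nabla^{k_\B} U_n$ to obtain a $p$-equi-integrable sequence $\tilde G_n$ with $\nabla^{k_\B} U_n - \tilde G_n \to 0$ in $L_q(\T_d)$ for every $q<p$. Reconstruct a compatible potential $\tilde U_n \in W^{k_\B}_p(\T_d;\R^h)$ from $\tilde G_n$ by inverting $\nabla^{k_\B}$ in Fourier space (adding back the mean of $U_n$), which yields $\tilde U_n - U_n \to 0$ in $L_q(\T_d)$; by Lemma \ref{lemma:multiplier}(iii), applied successively to the $0$-homogeneous multipliers relating the various derivatives, the entire family $\sum_{j=0}^{k_\B} |\nabla^j \tilde U_n|$ is $p$-equi-integrable on $\T_d$.

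\emph{Step 3 (compact support in $\Omega$).} Choose cutoffs $\phi_m \in C_c^\infty(\Omega;[0,1])$ with $\phi_m \equiv 1$ on $\Omega_m := \{x \in \Omega : \dist(x, \partial\Omega) > 1/m\}$ and $|\nabla^j \phi_m| \leq C m^j$, and set $w_{n,m} := \phi_m \tilde U_n$. The Leibniz rule gives
\[
    \B w_{n,m} = \phi_m \, \B \tilde U_n + R_{n,m},
\]
where $R_{n,m}$ is a linear combination of terms of the form $(\nabla^i \phi_m)(\nabla^{k_\B - i} \tilde U_n)$ with $i \geq 1$, all supported in the shrinking strip $\Omega \setminus \Omega_m$. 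The $p$-equi-integrability of the derivatives of $\tilde U_n$ from Step 2 implies that $\|R_{n,m}\|_{L_p(\Omega)} \to 0$ and $\|(1-\phi_m)\,\B \tilde U_n\|_{L_p(\Omega)} \to 0$ as $m \to \infty$, uniformly in $n$. Combined with $\B \tilde U_n - v_n \to 0$ in $L_q(\Omega)$ from Steps 1 and 2, Lemma \ref{lemma:diagonal} produces a diagonal index $m = m(n)$ such that $w_n := w_{n, m(n)}$ is compactly supported in $\Omega$, satisfies (ii) with any $q<p$, and has $\sum_{j=0}^{k_\B}|\nabla^j w_n|$ $p$-equi-integrable.

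The main obstacle is Step~1: the assumption $\A v_n \to 0$ holds only on $\Omega$, so the extension must not be governed by $v_n$ itself near $\partial\Omega$ (which would create boundary concentrations in $\A V_n$), but by a potential $u_n'$ of $v_n$. Producing such a potential on a Lipschitz domain, rather than directly on the torus, requires a careful localisation of Proposition \ref{proposition:potential} and control of the $\B$-extension across $\partial\Omega$; the rest of the argument then reduces to a standard combination of Fonseca--Müller truncation with a cutoff.
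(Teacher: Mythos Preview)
Your outline has two real gaps, one in Step~1 and one in Step~3, and in fact the paper's proof is organised precisely to sidestep them.

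\textbf{Step 1.} You invoke a ``localised version of Proposition \ref{proposition:potential}(ii)'' to produce a potential $u_n'\in W^{k_\B}_p(\Omega')$ with $\B u_n'-v_n\to 0$ in $L_p(\Omega)$. No such tool is available in the paper: Proposition \ref{proposition:potential} lives on the torus, and inverting $\B$ on a bounded Lipschitz domain with the right norm control is a nontrivial statement in its own right (indeed, essentially what the theorem is after). The paper avoids this completely by reversing the order of operations: it zero-extends $v_n$, applies Proposition \ref{proposition:equiint} to get an equi-integrable $\tilde v_n$, and \emph{then} multiplies by a cutoff $\varphi_m$ so that $\bar v_{n,m}=\varphi_m\tilde v_n$ is compactly supported in $\Omega$. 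Only at this point is $\B^{-1}$ applied, and legitimately so, on $\T_d$. The price is that $\A\bar v_{n,m}$ picks up commutator terms of size $m^{k_\A}\Vert\A\tilde v_n\Vert_{W^{-k_\A}_q}+m^{k_\A+1}\Vert\tilde v_n\Vert_{W^{-1}_q}$, and the whole point of the construction is that both factors on the right tend to zero in $n$, so one can let $m=m(n)\to\infty$ slowly enough (the bound $M_1(n)$ in the paper).

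\textbf{Step 3.} Your uniformity claim ``$\Vert R_{n,m}\Vert_{L_p}\to 0$ as $m\to\infty$, uniformly in $n$'' is false. The Leibniz remainder contains terms $\nabla^i\phi_m\cdot\nabla^{k_\B-i}\tilde U_n$ with $|\nabla^i\phi_m|\sim m^i$ on a strip of measure $\sim 1/m$. Equi-integrability of $\nabla^{k_\B-i}\tilde U_n$ gives that its $L_p$-mass on the strip tends to zero, but with no rate; multiplying by $m^{ip}$ generically makes the product blow up (already for $k_\B=1$ and a single fixed smooth $\tilde U_n$ one gets $m^p\cdot m^{-1}\to\infty$). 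The paper resolves the analogous issue differently: since $\bar w_{n,m}=\B^{-1}\bar v_{n,m}$ and $\bar v_{n,m}\rightharpoonup 0$ uniformly in $m$ (Lemma \ref{lemma:diagonal}), the compact embedding $W^{k_\B}_p\hookrightarrow W^{k_\B-1}_s$ for $s<p^\ast$ gives $\sup_m\Vert\bar w_{n,m}\Vert_{W^{k_\B-1}_s}\to 0$ as $n\to\infty$. This quantitative decay in $n$, uniform in $m$, is what allows one to absorb the $m^{k_\B}$ factors by choosing $m(n)\le M_2(n),M_3(n)$ appropriately; the role of $n$ and $m$ is \emph{not} symmetric, and the argument is a genuine diagonal selection rather than a uniform limit in $m$.

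In short, the paper's two-layer cutoff (first $\varphi_m$ on $\tilde v_n$, then $\psi_m$ on $\bar w_{n,m}$) together with the careful choice of $m(n)$ against three competing bounds $M_1,M_2,M_3$ is not cosmetic: it is exactly what lets one stay on the torus for the Fourier step and control the commutator blow-up.
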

The main difficulty in the proof compared to the statement without boundary values in \cite{FM} is to obtain the compact support. % (i.e. zero boundary values!)
\begin{proof}
\textbf{Step 1: Construction of the sequence.} 
 We assume by scaling that $\Omega \subset \subset (0,1)^d$, i.e. it may be viewed as a subset of  the $d$-dimensional torus $\T_d$. We extend $v_n$ by $0$ outside $\Omega$. Let $m \in \N$. We define open sets $V_m$ and $U_m$, such that $V_m \subset \subset U_m \subset \subset \Omega$; in particular \begin{align*}
    \{ x \in \Omega \colon \dist(x,\partial \Omega) >2/m\} &\subset  V_m \subset   \{ x \in \Omega \colon \dist(x,\partial \Omega) >1/m\}, \\
     \{ x \in \Omega \colon \dist(x,\partial \Omega) >4/m\} &\subset U_m \subset   \{ x \in \Omega \colon \dist(x,\partial \Omega) >3/m\}.
\end{align*}
Then there exist $\varphi_m \in C_c^{\infty}(V_m)$ with $\varphi_m \equiv 1$ on  $U_m$ and $\psi_m \in C_c^{\infty}(\Omega)$ with $\psi_m \equiv 1$ on $V_m$, such that for all $k,m \in \mathbb{N}$ 
\[
    \Vert \nabla^k \psi_m \Vert_{L_{\infty}}, \Vert \nabla^k \varphi_m \Vert_{L_{\infty}} \leq C(k) m^k.
\]
By Proposition \ref{proposition:equiint} there exists a $p$-equi-integrable sequence $\tilde{v}_n$, such that $\Vert \tilde{v}_n - v_n \Vert_{L_q} \to 0$ for $q<p$. Therefore, as  $v_n$ converges weakly to $0$, so does $\tilde{v}_n$.
We define 
\begin{equation*}
    \bar{v}_{n,m} = \varphi_m \tilde{v}_n, 
    \quad
    \bar{w}_{n,m} = \B^{-1} \bar{v}_{n,m}
    \quad \text{and} \quad 
    w_{n,m} = \psi_m \bar{w}_{n,m}.
\end{equation*}
We claim that we can take an appropriate diagonal sequence $w_{n,m(n)}$ with $m(n) \to \infty$, as $n \to \infty$, such that $w_{n,m(n)}$ satisfies the requirements of Theorem \ref{lemma:equiintboundary}. The purpose of the following steps is to construct such a sequence $m(n)$.
\medskip

\noindent\textbf{Step 2: Estimates on $\bar{v}_{n,m}$. }
First, we show that 
\begin{equation} \label{Step2:1}
    \lim_{m \to \infty} \sup_{n \in \N} \Vert \tilde{v}_n - \bar{v}_{n,m} \Vert_{L_p} =0.
\end{equation}
To this end, note that there is a constant $C>0$, such that \begin{equation} \label{Step2:2}
    \vert \Omega \setminus V_m \vert  \leq \vert \Omega \setminus U_m \vert \leq \tfrac{C}{m}
\end{equation}
since $\Omega$ has Lipschitz boundary.
Then we deduce that
\begin{align*}
    \sup_{n \in \N} \Vert \tilde{v}_n - \bar{v}_{n,m} \Vert_{L_p} &\leq  \sup_{n \in \N}\Vert \tilde{v}_n \Vert_{L_p(\Omega \setminus U_m)} \\
    &\leq  \sup_{n \in \N} \sup_{\vert E \vert \leq \vert (\Omega \setminus U_m)\vert} \Vert \tilde{v}_n \Vert_{L_p(E)} \\
    & \leq  \sup_{n \in \N} \sup_{\vert E \vert \leq Cm^{-1}} \Vert \tilde{v}_n \Vert_{L_p(E)}.
\end{align*}
As $\tilde{v}_n$ is $p$-equi-integrable, the right-hand side converges to $0$, as $m \to \infty$. Thus, \eqref{Step2:1} is established.

Second, we bound the $W^{-k_{\A}}_q$-norm of $\A \bar{v}_{n,m}$. We claim that there exists a sequence $M_1(n)$ with $M_1(n) \to \infty$, as $n \to \infty$, such that for all $m(n)$ with $m(n) \leq M_1(n)$ and $m(n) \to \infty$, as $n \to \infty$, there exists $1<q<p$ such that
\begin{equation} \label{Step2:3}
    \lim_{n \to \infty} \Vert \A \bar{v}_{n,m(n)} \Vert_{W^{-k_{\A}}_q(\T_d;\R^l)}=0.
\end{equation}
Note that if $\tilde{v}_n$ is in $C^k(\Omega;\R^m)$, then we may write 
\[
    \A{\bar{v}_{n,m}} = \A (\varphi_m \tilde{v}_n) = (\A \tilde{v}_n) \varphi_m + \sum_{\vert \alpha \vert =k_{\A}} \sum_{\beta<\alpha} \binom{\alpha}{\beta} A_{\alpha} \partial_{\beta} \tilde{v}_n \partial_{\alpha -\beta} \varphi_m.
\]
Therefore, by applying the definition of $W^{-k_{\A}}_q(\T_d;\R^l)$, we may estimate
\begin{equation} \label{Step2:estimate:A}
    \Vert \A \bar{v}_{n,m} \Vert_{W^{-k_{\A}}_q(\T_d;\R^l)} \leq \Vert \A \tilde{v}_n \Vert_{W^{-k_{\A}}_q(\Omega;\R^l)} \Vert \varphi_m \Vert_{W^{k_{A}}_\infty(\Omega)} + C \Vert \tilde{v}_n \Vert_{W^{-1,q}(\Omega)} \Vert \varphi_m \Vert_{W^{k_{\A}+1}_\infty(\Omega)}.
\end{equation}
Due to density of $C^k(\Omega;\R^m)$ in $L_p(\Omega;\R^m)$, inequality \eqref{Step2:estimate:A} is still valid even if $\tilde{v}_n$ is merely in $L_p(\Omega;\R^m)$. With the estimates for the derivatives of $\varphi$ we get 
\[
    \Vert \A \bar{v}_{n,m} \Vert_{W^{-k_{\A}}_q(\T_d;\R^l)} \leq 
    C\left( m^{k_{\A}} \Vert \A \tilde{v}_n \Vert_{W^{-k_{\A}}_q(\Omega;\R^l)} 
    + 
    m^{k_{\A}+1} \Vert \tilde{v}_n \Vert_{W^{-1}_q(\Omega;\R^l)}\right).
\]
Note that, on the one hand, $\A \tilde{v}_n \to 0$ in $W^{-{k_{\A}}}_q(\Omega;\R^l)$, as $\A v_n \to 0$ in $W^{-{k_{\A}}}_p(\Omega;\R^l)$ and $\tilde{v}_n - v_n \to 0$ in $L_q(\Omega;\R^m)$ for $q<p$. On the other hand, as $\tilde{v}_n$ is bounded in $L_p(\Omega;\R^m)$ and weakly converging to $0$, $\tilde{v}_n \to 0$ in $W^{-1}_q(\Omega;\R^m)$ strongly, due to the compact embedding of $L_q(\Omega;\R^m)$ into $W^{-1}_q(\Omega;\R^m)$. Therefore, choosing \begin{equation} \label{Step2:defM1}
    M_1(n) := \left( \min \left\{\{ \Vert \A \tilde{v}_n \Vert_{W^{-k}_q},~\Vert \tilde{v}_n \Vert_{W^{-1}_q} \right\}\right)^{\frac{-1}{3 k_{\A}}} 
    \longrightarrow \infty, 
    \quad \text{as } n \to \infty,
\end{equation}
we get 
\begin{equation} \label{Step2:4}
    \lim_{n \to \infty} \sup_{m \leq M_1(n)} \Vert \A \bar{v}_{n,m} \Vert_{W^{-k_{\A}}_p(\T_d;\R^l)} =0.
\end{equation}
Last, let us note that due to equi-integrability of $\tilde{v}_n$ and $\vert \bar{v}_{n,m} \vert \leq \vert \tilde{v}_n \vert$, also the set $\{ \bar{v}_{n,m} \}_{n,m \in \N}$ is equi-integrable.
\medskip

\textbf{Step 3: Upper Bound on $\Vert \B w_{n,m} -v_n \Vert_{L_q}$. }
First, we note that, by definition, $w_{n,m}$ is compactly supported in $\Omega$ for any $m \in \N$, as $\psi_m$ is compactly supported in $\Omega$. Moreover, it holds \begin{align*}
    \Vert \B w_{n,m} -v_n \Vert_{L_q} &\leq \Vert \B w_{n,m} - \B \bar{w}_{n,m} \Vert_{L_q} + \Vert \B \bar{w}_{n,m} - \bar{v}_{n,m} \Vert_{L_q} + \Vert \bar{v}_{n,m} - \tilde{v}_{n} \Vert_{L_q} + \Vert \tilde{v}_n -v_n \Vert_{L_q} 
    \\ 
    &\eqqcolon 
    \mathrm{(I)} + \mathrm{(II)} +  \mathrm{(III)} +  \mathrm{(IV)}.
\end{align*}
We already established by the choice of $\tilde{v}_n$ (c.f. Proposition \ref{proposition:equiint}), that $\mathrm{(IV)} \to 0$, as $n \to \infty$. Furthermore, $\mathrm{(III)} \to 0$, as $n \to \infty$, whenever $m=m(n) \to \infty$, cf. \eqref{Step2:1}. Proposition \ref{proposition:potential} yields
\[
    \mathrm{(II)} 
    \leq 
    \vert \A \bar{v}_{n,m(n)} \vert + \int_{\T_d} \bar{v}_{n,m(n)} \dd x.
\]
 The first term tends to $0$ by \eqref{Step2:4}, whenever $m(n) \leq M_1(n)$ is a sequence diverging to $\infty$ as $n \to \infty$, while the mean of $\tilde{v}_{n,m(n)}$ converges to zero since $\tilde{v}_n\rightharpoonup 0$ and because of \eqref{Step2:1}. It remains to bound $\mathrm{(I)}.$ To this end, note that triangle inequality and then H\"older's inequality imply
 \begin{align*}
     \mathrm{(I)} &\le \Vert (1-\psi_m) \B \bar{w}_{n,m} \Vert_{L_q} + \sum_{\vert \alpha \vert= k_{\B}} \sum_{\beta < \alpha } \Vert B_{\alpha} \partial_{\beta} \bar{w}_{n,m} \partial_{\alpha-\beta} \psi_m \Vert_{L_q}
     \\
     &\leq \Vert (1- \psi_m) \Vert_{L^{qp/(p-q)}} \Vert \B \bar{w}_{n,m} \Vert_{L_p} + m^{k_{\B}}\Vert \bar{w}_{n,m} \Vert_{W^{k_{\B}-1}_q} \\
     &\leq m^{(q-p)/(pq)}  \Vert \B \bar{w}_{n,m} \Vert_{L_p} + m^{k_{\B}}\Vert \bar{w}_{n,m} \Vert_{W^{k_{\B}-1}_q}.
\end{align*}
The first term vanishes (uniformly in $n\in \N$) as $m\to \infty$, due to the uniform $L_p$ bound on $\B \bar{w}_{n,m}$, as the operator $W=\nabla^{k_{\B}} \circ \B^{-1}$ is a $0$-homogeneous, smooth Fourier multiplier. Moreover, for the second summand note that due to Lemma \ref{lemma:multiplier} \ref{lemma:multiplier2} $W$ is continuous from $L_q(\T_d;\R^m)$ to $L_q(\T_d;\R^h\otimes (\R^{d})^{\otimes k_\B})$ in the weak topology. Recall, that $\tilde{v}_n \rightharpoonup 0$, as $n \to \infty$ in $L_p(\T_d;\R^m)$, that $\bar{v}_{n,m}$ is uniformly bounded in $L_p(\T_d;\R^m)$ and for fixed $m \in \N$, $\bar{v}_{n,m} = \varphi_m \tilde{v}_n \rightharpoonup 0$. The weak topology of $L_p(\T_d;\R^m)$ is metrisable on bounded sets, whence we may apply Lemma \ref{lemma:diagonal} to get that the convergence 
\[
    \bar{v}_{n,m} \rightharpoonup 0
    \quad \text{in } L_p(\T_d;\R^m), 
    \quad \text{as } n \to \infty,
\]
is \textit{uniform } in $m \in \N$. Again, by the boundedness of $W$, it holds that
\begin{equation} \label{Step3:1}
    W \bar{v}_{n,m} \rightharpoonup 0 
    \quad
    \text{in } L_p(\T_d;\R^h\otimes (\R^{d})^{\otimes k_\B}) \text{ uniformly in } m.
\end{equation}
For $s<p^{\ast} = dp/(d-p)$, the embedding $W^{k_{\B}}_p(\T_d;\R^h) \hookrightarrow W^{k_{\B}-1}_s(\T_d;\R^h)$ is compact. Hence, uniform weak convergence of $\nabla^{k_{\B}} \bar{w}_{n,m}$, together with Poincar\'{e}'s inequality, imply that 
\begin{equation} \label{Step3:2}
    \lim_{n \to \infty} \sup_{m \in \N} \Vert \bar{w}_{n,m} \Vert_{W^{k_{\B}-1}_s} =0.
\end{equation}
This holds in particular for $s=p<p^\ast$. Therefore, choosing $M_2(n)$ as 
\[
    M_2(n) := \left( \sup_{m \in \N} \Vert \bar{w}_{n,m} \Vert_{W^{k_{\B}-1,p}} \right)^{\frac{-1}{2 k_{\B}}}
\]
implies for any sequence $m(n)$ with $m(n) \leq \min\{M_1(n),M_2(n)\}$ and $m(n) \to \infty$, the inequality
\[
    \Vert \B w_{n,m(n)} -v_n \Vert_{L_q} \longrightarrow 0, \quad \text{as } n \to \infty.
\]
\medskip

\noindent\textbf{Step 4: Equi-integrability of $w_{n,m}$. }
It remains to show that we may choose the diagonal sequence $w_{n,m(n)}$ in such a fashion, that $\nabla^j w_{n,m(n)}$ is still $p$-equi-integrable for all $1 \leq j \leq k_{\B}$.
Note that 
\[
    \nabla^j w_{n,m} = \psi_m\nabla^j \bar{w}_{n,m} + \sum_{i=0}^{j-1} \nabla^i \bar{w}_{n,m} \otimes \nabla^{j-i} \psi_m.
\]
The sequence $\bar{w}_{n,m}$ is uniformly bounded in $m$ and $n$ in $W^{k_{\B}}_p(\T_d;\R^m)$, as $\bar{v}_{n,m}$ is uniformly bounded in $L_p(\T_d;\R^m)$ and $\B^{-1}$ maps $L_p(\T_d;\R^m)$ to $W^{k_{\B}}_p(\T_d;\R^h)$. Hence, for $j<k_{\B}$, $\nabla^j \bar{w}_{n,m}$ is bounded in $L_r(\T_d;\R^h\otimes (\R^{d})^{\otimes j})$ for some $r>p$ and thus $\vert  \psi_m\nabla^j \bar{w}_{n,m}  \vert \leq \vert  \nabla^j \bar{w}_{n,m}\vert$ is $p$-equi-integrable. Furthermore, observe that we have the pointwise estimate 
\[
    \abs{\nabla^i \bar{w}_{n,m} \otimes \nabla^{j-i} \psi_m} \leq 
    m^{k_{\B}} \abs{\nabla^i \bar{w}_{n,m}} 1_{\Omega \setminus V_m}.
\]
Hence, for $p$-equi-integrability it suffices to show that there is $M_3(n) \to \infty$, as $n \to \infty$, such that for $i < k_{\B}$ the sets
\begin{align} \label{Step4:1}
    &
    \left\{ \nabla^{k_{\B}} \bar{w}_{n,m} \colon m \leq M_3(n) \right\}  
    \\ \label{Step4:2}
    &
    \left\{m^{k_{\B}} \nabla^i \bar{w}_{n,m} 1_{\Omega \setminus U_m} \colon m \leq M_3(n) \right\}
\end{align}
are $p$-equi-integrable.
Indeed, \eqref{Step4:1} is clear, even for $m \in \N$, instead of only $m \leq M_3(n)$, using again that $W = \nabla^{k_{\B}} \circ \B^{-1}$ is a smooth $0$-homogeneous Fourier multiplier. On the other hand, $\nabla^{k_{\B}} \bar{w}_{n,m} = W( \tilde{v}_{n,m})$ and  $W( \tilde{v}_{n,m})$ is $p$-equi-integrable for $m,n \in \N$ by Step 1. 
In \eqref{Step3:2} we have already established the convergence 
\[
    \lim_{n \to \infty} \sup_{m \in \N} \Vert \bar{w}_{n,m} \Vert _{W^{k_{\B}-1}_s} =0 
\]
for all $s < p^{\ast}$. Let now $s\in (p,p^{\ast})$ be fixed.
Then for all measurable sets $E$ we find that
\begin{align*}
    \int_{E} \vert \nabla^i \bar{w}_{n,m} m^{k_{\B}} 1_{\Omega \setminus V_m} \vert^p \dd x  
    & \leq 
    m^{k_{\B}p}\int_{E \cap (\Omega \setminus V_m)} \vert \nabla^i \bar{w}_{n,m} \vert^p 
    % \\
    % & \leq 
    % m^{k_{\B}p} \vert E \cap (\Omega \setminus V_m) \vert \fint _{E \cap (\Omega \setminus V_m)} \vert \nabla^i \bar{w}_{n,m} \vert^p
    \\
    & \leq 
    m^{k_{\B}p}\vert E \cap (\Omega \setminus V_m) \vert \left(\fint_{E \cap (\Omega \setminus V_m)} \vert  \nabla^i \bar{w}_{n,m} \vert^s \dd x \right)^{p/s}
    % \\
    % & \leq m^{k_{\B}p} \vert E \cap (\Omega \setminus V_m) \vert^{\frac{s-p}{s}} \Vert w_{n,m} \Vert_{W^{k_{\B}-1}_s}^p 
    \\
    & \leq \vert E \vert^{\frac{s-p}{p}} m^{k_{\B}p} \sup_{\tilde{m} \in \N} \Vert \bar{w}_{n,\tilde{m}} \Vert_{W^{k_{\B}-1}_s}^p.
\end{align*}
Note that $\vert E\vert^{\frac{s-p}{p}} \to 0$, as $\vert E \vert \to 0$. Hence we assume that $m \leq M_3(n)$, with $M_3$ defined as 
\begin{equation}\label{Step4:defM3}
    M_3(n) := \left(\sup_{m \in \N} \Vert \bar{w}_{n,m} \Vert_{W^{k_{\B}-1}_s} \right)^{\frac{-1}{2 k_{\B}}} \longrightarrow \infty, 
    \quad \text{as } n \to \infty.
\end{equation}
We conclude that for any $0 \leq j \leq k_{\B}$ the set 
\[
    \left \{ \nabla^j w_{n,m} \colon n \in \N, m \leq M_3(n) \right\}
\]
is $p$-equi-integrable. 
\medskip

Finally, choosing a sequence $m(n) \to \infty$, as $n \to \infty$, with $m(n) \leq \min\{M_1(n),M_2(n),M_3(n)\} \to \infty$ completes the proof.
%yields a sequence $w_{n,m(n)}$ that is compactly supported on $\mathrm{spt}(\varphi_{m(n)}) \subset \subset \Omega$, that  $\Vert \B w_{n,m(n)} -v_n \Vert_{L_q} \to 0$ as $n \to \infty$ (Step 3), and that $\nabla^j w_{n,m(n)}$ is $p$-equi-integrable for $0 \leq j \leq k$. Therefore, the proof is finished.
\end{proof}
 
 \begin{corollary}[Preservation of boundary conditions] \label{coro:boundaryequiint}
 Let $\Omega \subset \R^d$ be an open and bounded set with Lipschitz boundary. Suppose that $\A \colon C^{\infty}(\R^d;\R^m) \to C^{\infty}(\R^d;\R^l)$ is a homogeneous differential operator of order $k_{\A}$, satisfying the constant rank property. Let $v \in L_p(\Omega;\R^m)$ and let $v_n \subset L_p(\Omega;\R^m)$, such that $v_n \rightharpoonup v$ in $L_p(\Omega;\R^m)$ and $ \A v_n \to \A v$ in $W^{-k_{\A}}_p(\Omega;\R^l)$. Suppose that $\B$ is a potential of $\A$. \begin{enumerate} [label=(\roman*)]
     \item\label{item:cor1} Suppose that $v$ can be written as $v= \B u$. There exists a sequence $u_n \subset W^{k_{\B}}_p(\Omega;\R^h)$, such that\begin{enumerate}
         \item $u_n - u$ is compactly supported in $\Omega$;
         \item $\B u_n$ is $p$-equi-integrable;
         \item $\Vert \B u_n - v_n \Vert_{L_{r}(\Omega)} \to 0$ for some $1<r<p$.
     \end{enumerate}
     \item\label{item:cor2} There is a sequence $\bar v_n \subset L_p(\Omega;\R^m)$, such that \begin{enumerate}
         \item $\A \bar v_n = \A v$;
         \item $\bar v_n-v$ is compactly supported in $\Omega$;
         \item $\bar v_n$ is $p$-equi-integrable;
         \item $\Vert \bar v_n-v_n \Vert_{L_{r}(\Omega)} \to 0$ for some $1<r<p$.
     \end{enumerate}
 \end{enumerate}
 \end{corollary}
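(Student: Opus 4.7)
The strategy is to reduce everything to a direct application of Theorem \ref{lemma:equiintboundary}, applied not to $v_n$ itself but to the shifted sequence $v_n - v$. Since $v_n \rightharpoonup v$ in $L_p(\Omega;\R^m)$ and $\A v_n \to \A v$ in $W^{-k_{\A}}_p(\Omega;\R^l)$, the shifted sequence satisfies $v_n - v \rightharpoonup 0$ in $L_p(\Omega;\R^m)$ and $\A(v_n - v) \to 0$ in $W^{-k_{\A}}_p(\Omega;\R^l)$, so the hypotheses of Theorem \ref{lemma:equiintboundary} are met. The theorem produces a sequence $w_n \in W^{k_{\B}}_p(\Omega;\R^h)$, compactly supported in $\Omega$, such that $\sum_{j=0}^{k_\B}|\nabla^j w_n|$ is $p$-equi-integrable and $\|\B w_n - (v_n - v)\|_{L_q} \to 0$ for every $q < p$; in particular this holds for some $1 < r < p$.

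For \ref{item:cor2}, define $\bar v_n := v + \B w_n$. Since $\B$ is a potential of $\A$, the identity $\textup{Im}\,\B[\xi] \subseteq \ker \A[\xi]$ for all $\xi \in \R^d \setminus \{0\}$ forces the composition $\A \circ \B$ to vanish at the level of Fourier symbols and therefore as a differential operator. Consequently $\A \bar v_n = \A v + \A \B w_n = \A v$. The difference $\bar v_n - v = \B w_n$ is compactly supported in $\Omega$, since $\B$ is a (local) differential operator and $\supp w_n \subset\subset \Omega$. Because $|\B w_n| \le C|\nabla^{k_\B} w_n|$ is dominated by an element of the $p$-equi-integrable family provided by Theorem \ref{lemma:equiintboundary}, the sequence $\B w_n$ is $p$-equi-integrable, and adding the fixed $L_p$-function $v$ preserves $p$-equi-integrability; hence $\bar v_n$ is $p$-equi-integrable. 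Finally,
\[
    \|\bar v_n - v_n\|_{L_r} = \|\B w_n - (v_n - v)\|_{L_r} \longrightarrow 0
\]
for the $r \in (1,p)$ furnished by the theorem.

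For \ref{item:cor1}, since $v = \B u$ with $u \in W^{k_\B}_p(\Omega;\R^h)$ is assumed, set $u_n := u + w_n$. Then $u_n - u = w_n$ is compactly supported in $\Omega$ by construction, and $\B u_n = \B u + \B w_n = v + \B w_n$. The same argument as in part \ref{item:cor2} gives that $\B u_n$ is $p$-equi-integrable, and
\[
    \|\B u_n - v_n\|_{L_r} = \|\B w_n - (v_n - v)\|_{L_r} \longrightarrow 0,
\]
which is the required convergence.

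The only non-trivial ingredient outside Theorem \ref{lemma:equiintboundary} is the vanishing of $\A \circ \B$, which is an immediate consequence of the definition \eqref{def:potential} of a potential; the rest is bookkeeping with support, equi-integrability, and the triangle inequality. There is no genuine obstacle, as the boundary-preserving modification work has already been carried out in Theorem \ref{lemma:equiintboundary}; the corollary simply packages that result in an affine-shifted form suitable for later use.
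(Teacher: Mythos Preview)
Your proof is correct and is precisely the intended argument: the paper states this result as a corollary of Theorem~\ref{lemma:equiintboundary} without giving a separate proof, and your reduction via the shift $v_n - v$ followed by setting $u_n = u + w_n$ (resp.\ $\bar v_n = v + \B w_n$) is exactly the natural derivation.
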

 Corollary \ref{coro:boundaryequiint} is used to modify sequences of functions in the constraint set $\Ccal$ to obtain equi-integrable sequences while at the same time preserving differential constraints and boundary conditions. Note that in problems of fluid mechanics the boundary conditions are typically given for $u$, the potential of $\epsilon$, therefore part \ref{item:cor1} is suitable for boundary conditions on the fluid velocity $u$ being the potential of the strain. On the other hand, boundary conditions for $\sigma$ are directly given in terms of the stress. Hence part \ref{item:cor2} is suitable there.

%------------------------------------------------
%------------------------------------------------
\bigskip
\subsection{Relaxation}

If the function $\funct$ is not $\A$-quasiconvex, the functional $I$ in \eqref{def:G} fails to be weakly lower-semicontinuous. Hence, we cannot ensure existence of minimisers just by using the \emph{direct method in the calculus of variations}.
However, when studying the data-driven problem, it is enough to consider approximate minimisers, i.e. minimising sequences $v_n$ with $I(v_n)$ converging to the infimum of $I$, and their weak limits $v^{\ast}$. In the following, we define a functional $I^{\ast}$ such that it is the \emph{relaxation} of $I$. Thus, any weak limit $v^{\ast}$ of a minimising sequence is a minimiser of $I^{\ast}$ and, vice versa, any minimiser of $I^{\ast}$ is a weak limit of approximate minimisers.

%------------------------------------------------
%------------------------------------------------

\bigskip
\subsubsection{Relaxation under a linear differential constraint.}

We recall the definition of $I$ from \eqref{def:G}. For simplicity, we use for the quasiconvex envelope of a function $\funct \colon \Omega \times \R^{m_1} \times \R^{m_2} \to \R$ the short-hand notation
\[
    \QA \funct(x,v) = \QA (\funct(x,\cdot))(v).
\]
Note that by Proposition $\ref{proposition:wlsc}$ the functional $I^{\ast}$ given by \[
I^{\ast}(v) := \begin{cases} \int_{\Omega} \QA \funct(x,v(x)) \dd x, & \A v =0 \\ \infty, & \text{else,} \end{cases}
\]
is weakly lower-semicontinuous in $L_p(\Omega;\R^{m_1}) \times L_q(\Omega;\R^{m_2})$. That $I^\ast$ is indeed the relaxation of $I$ is a consequence of the following (linear) result \cite{BFL}. 

\begin{proposition} \label{prop:relaxationlinear}
Let $(v_1,v_2) \in L_p(\Omega;\R^{m_1}) \times L_q(\Omega;\R^{m_2})$. Furthermore, let $\funct\colon \Omega \times \R^{m_1} \times \R^{m_2} \to \R$ satisfy the following assumptions:  
\begin{enumerate}[label=(A\arabic*)]
    \item \label{hypo:funct1} $\funct \colon \Omega \times (\R^{m_1} \times \R^{m_2}) \to \R$ is a Carath\'eodory function;
    \item  \label{hypo:funct2} there is $C>0$ such that for almost every $x \in \Omega$ and $(v_1,v_2)\in  \R^{m_1}\times \R^{m_2}$ it holds that
    \[
        0 \leq \funct(x,v_1,v_2) \leq C(1+ \vert v_1 \vert^p+ \vert v_2 \vert^q).
    \]
\end{enumerate} 
Then, for any $\varepsilon >0$ there exists a bounded sequence $v^n=(v_{1,n}^{\varepsilon},v_{2,n}^{\varepsilon})$ in $L_p(\Omega;\R^{m_1}) \times L_q(\Omega;\R^{m_2})$, such that 
\begin{enumerate}[label=(\roman*)]
    \item $v_{1,n}^{\varepsilon} \rightharpoonup v_1$ in $L_p(\Omega;\R^{m_1})$ and $v_{2,n}^{\varepsilon} \rightharpoonup v_2$ in $L_q(\Omega;\R^{m_2})$ as $n \to \infty$;
    \item $\A_1 v_{1,n}^{\varepsilon} = \A_1 v_1$ and $\A_2 v_{2,n}^{\varepsilon} =  \A_2 v_2$;
    \item $v_n^{\varepsilon}$ is almost a recovery sequence, i.e. \[
     \int_{\Omega} \QA \funct(x,v) \dd x \geq \lim_{n \to \infty} \int_{\Omega} \funct(x,v^{n,\varepsilon}) \dd x - \varepsilon.
    \]
\end{enumerate}
\end{proposition}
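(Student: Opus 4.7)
The strategy mirrors the classical relaxation construction of \cite{BFL}, adapted to the product space $L_p \times L_q$ and to the block-diagonal structure $\A = (\A_1, \A_2)$. The key observation is that the differential constraints on the two components of $v$ decouple, so they can be enforced independently via component-wise Fourier multipliers, while the integrand $\funct$ couples the two components only through its scalar value. Throughout, Proposition~\ref{prop:FM}\ref{prop:FM4} (local Lipschitz continuity of $\QA\funct$) and the $(p,q)$-growth assumption \ref{hypo:funct2} provide the quantitative control of all error terms.

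First, via a Scorza--Dragoni and measurable-selection argument, I would pick for a.e. $x \in \Omega$ a smooth test function $\psi_x \in \test$ with zero mean on $\T_d$ such that
\begin{equation*}
    \int_{\T_d} \funct\!\left(x, v(x) + \psi_x(y)\right) \dd y
    \leq
    \QA \funct(x, v(x)) + \tfrac{\varepsilon}{2|\Omega|},
\end{equation*}
jointly measurably in $(x,y)$. I then partition $\Omega$ (up to a null set) into small cubes $\{Q_k^h\}_k$ of side $h$ with centres $x_k$, and exploit the fact that each $\A_i$ is homogeneous: the rescaled periodic function $y \mapsto \psi_{x_k}(ny)$ still lies in $\ker\A_i$ componentwise. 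Pasting gives
\begin{equation*}
    \Psi_{h,n}(x)
    :=
    \sum_k \psi_{x_k}\!\left(\tfrac{n(x - x_k)}{h}\right) \mathbbm{1}_{Q_k^h}(x),
\end{equation*}
which converges weakly to zero as $n \to \infty$ (by Riemann--Lebesgue and the vanishing means) and satisfies $\A_i \Psi_{h,n}^i = 0$ in the interior of each cube. Replacing $v$ by its cube-wise averages $v(x_k)$ when invoking the bound from the measurable selection costs only $o(1)$ as $h \to 0$, thanks to Proposition~\ref{prop:FM}\ref{prop:FM4} and \ref{hypo:funct2}.

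To close the argument I would restore the global constraint and bound the energy. The pasted sequence may fail to satisfy $\A_i \Psi^i_{h,n} = 0$ across cube faces; smoothly cutting off $\Psi_{h,n}$ strictly inside each cube and projecting onto $\ker \A_i$ by the Fourier multiplier $\B_i \circ \B_i^{-1}$ from Proposition~\ref{proposition:potential} fixes this. By Lemma~\ref{lemma:multiplier} these projections are bounded and weakly continuous, acting in $L_p$ on the first component and in $L_q$ on the second; the correction error is controlled in $W^{-k_{\A_i}}_{p_i}$ by the $\A_i$-defect, which lives on a boundary layer of measure $O(h)$ and hence vanishes along a suitable diagonal $h = h(n) \to 0$. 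Setting $v_n^\varepsilon = v + \tilde\Psi_{h(n),n}$ and combining the cube-by-cube near-optimality with uniform continuity of $\funct$ on bounded sets yields properties (i)--(iii).

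The main obstacle is precisely this constraint-correction step. Because the two components live at different integrability scales $p$ and $q$, one has to run two \emph{distinct} Fourier-multiplier corrections and verify that neither generates concentrations nor destroys weak convergence. The independence of $\A_1$ and $\A_2$ and the split form of \ref{hypo:funct2} make this possible: each component is handled by its own multiplier on its own Lebesgue scale, and the resulting errors combine additively, so tuning the parameters $h \to 0$ and $n \to \infty$ along a diagonal (the former slower than the latter) keeps the total energy defect below $\varepsilon$.
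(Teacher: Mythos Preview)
The paper does not give a proof of this proposition; it is stated as the $(p,q)$-variant of the relaxation result in \cite{BFL} and referred to that source without further argument. Your sketch is a faithful outline of the construction carried out there: Scorza--Dragoni plus measurable selection to pick near-optimal periodic correctors, freezing of $x$ and $v(x)$ on a fine cube grid, high-frequency periodisation, a cut-off inside each cube, and a final Fourier-multiplier projection to restore the global constraint. Your observation that the block-diagonal form $\A=(\A_1,\A_2)$ lets one run two independent projections at the scales $p$ and $q$ is precisely what makes the $(p,q)$-extension go through, and this is the only new point compared with \cite{BFL}.

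One minor comment on the write-up: when you say that replacing $v$ by its cube-wise values costs $o(1)$ ``thanks to Proposition~\ref{prop:FM}\ref{prop:FM4}'', note that this Lipschitz estimate concerns $\QA\funct$, whereas the freezing error you need to control at that stage involves $\funct$ itself (inside the integral $\int_{\T_d}\funct(x,v(x)+\psi_{x_k}(y))\dd y$). The right tools there are the Carath\'eodory assumption, the growth bound \ref{hypo:funct2}, and Scorza--Dragoni for uniform continuity in $x$ on a large compact, together with $p$- and $q$-equi-integrability of the correctors; Proposition~\ref{prop:FM}\ref{prop:FM4} is what you use on the $\QA\funct$ side. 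This does not affect the validity of the outline.
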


\begin{remark} \label{remark:coerc} 
     The (almost) recovery sequence $v_n^\eps$ in Proposition \ref{prop:relaxationlinear} is bounded in $L_p(\Omega;\R^{m_1}) \times L_q(\Omega;\R^{m_2})$ with a bound that depends on $\varepsilon$. Consequently, a priori we might not be able to take a weakly convergent diagonal sequence $v_n^{\varepsilon(n)}$, such that 
     \[
     \int_{\Omega} \QA \funct(x,v) \dd x 
     \geq 
     \lim_{n \to \infty} \int_{\Omega} \funct\left(x,v_n^{\varepsilon(n)}\right)\dd x.
     \]
     However, for fixed $v=(v_1,v_2) \in L_p(\Omega;\R^{m_1}) \times L_q(\Omega;\R^{m_2})$, let us define the constraint set $\Ccal_v$ as the set of functions $(w_1,w_2) \in L_p(\Omega;\R^{m_1}) \times L_q(\Omega;\R^{m_2})$ satisfying 
     \[
        \begin{cases}
            \A_1 w_1 = \A_1 v_1 
            &
            \\
            \A_2 w_2 = \A_2 w_2. &
        \end{cases} 
     \]
     We say that a functional $J$ is \emph{coercive} on $\Ccal_v$, provided 
     \begin{equation}\label{eq:def_coercivity}
        v \in \Ccal_v \text{ and } \Vert v \Vert \to \infty 
        \quad \Longrightarrow \quad
        J(v) \to \infty.
     \end{equation}
     If $J \colon v \mapsto \int_{\Omega} f(x,v) \dd x$ is coercive, there is a uniform bound on the $L_p(\Omega;\R^{m_1}) \times L_q(\Omega;\R^{m_2})$-norm of $v_n^{\varepsilon}$. By taking a diagonal sequence of $v_n^{\varepsilon}$ we may conclude the existence of a recovery sequence $v_n$ satisfying 
      \[
     \int_{\Omega} \QA \funct(x,v) \dd x \geq \lim_{n \to \infty} \int_{\Omega} \funct(x,v_n) \dd x.
    \]
    \emph{Coercivity} as defined in \eqref{eq:def_coercivity} is classically obtained by assuming that 
    \begin{equation} \label{classcoerc}
        f(x,v) \geq C_1 (\vert v_1 \vert^p + \vert v_2 \vert^q) -C_2.
    \end{equation}
    This strong pointwise coercivity condition is however not suitable for our setting. The distance function to a set $K$ only satisfies \eqref{classcoerc} if the set $K$ is bounded. Instead, we use a weaker coercivity condition of the type 
    \begin{equation}
        f(x,v) \geq C_1 (\vert v_1 \vert^p + \vert v_2 \vert^q) -\gamma v_1 \cdot v_2 -C_2.
    \end{equation}
    In general, $v_1 \cdot v_2$ does not have a good pointwise bound. Nevertheless, in the fluid mechanical setting, appropriate boundary conditions allow us to bound the integral $\int_{\Omega} v_1 \cdot v_2 \dd x$, cf. Section  \ref{sec:diffconst}. 
\end{remark}

\subsubsection{Relaxation under a semi-linear differential constraint.} 

As above, let $\Omega \subset \R^d$ be an open and bounded domain with Lipschitz boundary. Instead of considering a linear differential constraint, e.g. 
\[
    \begin{cases}
        \A_1 v_1 = 0 & 
        \\
        \A_2 v_2 = f,
    \end{cases}
\]
we include a semilinear term. In the fluid mechanical setting this semilinear term is given by
\[
    \epsilon \longmapsto (u \cdot \nabla) u,
\]
where $u$ is uniquely determined by $\epsilon$ due to boundary conditions and the constraint $\epsilon = \tfrac{1}{2}(\nabla u +\nabla u^T)$.

We fix a suitable general setting. Let, as before  $\A_1 \colon L_p(\Omega;\R^{m_1}) \to W^{-k_1}_p(\Omega;\R^{l_1}) $ be a constant rank operator with a potential $\B_1 \colon W^{k_{\B_1}}_p(\Omega;\R^{h_1}) \to L_p(\Omega;\R^{m_1})$ and $\A_2 \colon L_q(\Omega;\R^{m_2}) \to W^{-k_2}_p(\Omega;\R^{l_2})$ be a constant rank operator. In addition, we require the semilinear term to satisfy the following:
\begin{enumerate} [label=(A\arabic*)] \setcounter{enumi}{2}
       \item  \label{eq:nl_3} $\theta \colon \Omega \times \R^{h_1} \times (\R^{h_1} \otimes \R^d) \ldots \times (\R^{h_1} \times \R^{h_1} \otimes (\R^d)^{\otimes k_{\B_1}} \to \R^{m_1}$ is a continuous map;
    \item \label{eq:nl_4} The map $\Theta$ defined on $W^{k_{\B_1}}_p(\Omega;\R^{h_1})$ via \[
    (\Theta u)(x) = \theta\bigl(x,u(x),\nabla u(x),\ldots,\nabla^{k_{\B_1}}u(x)\bigr)
    \]
    is continuous from the \emph{weak} topology of $W^{k_{\B_1}}_p(\Omega;\R^{h_1})$ to the strong topology of  $L_r(\Omega;\R^{l_2})$ for some $r>q$.
    % \item $\A_2 \colon L_q(\Omega;Y) \times L_q(\Omega) \to W^{-1}_q(\Omega;\R^d)$ is the divergence acting on symmetric matrices. If we split up $v_2=(\tigma,\pi) \in Y \times \R$, then \[
    % \A_2 v_2 = \diverg \tigma - \nabla \pi;
    % \]
    % \item  $ \Theta \colon W^{k_{\B}}_p(\Omega;\R^{h_1}) \to W^{-1}_r(\Omega;\R^{l_2})$ is a (possibly non-linear) map that is continuous from the \emph{weak} topology of $L_p(\Omega;\R^{m_1})$ to the \emph{strong} topology of $W^{-1}_r(\Omega;\R^{l_2})$ for some $r > q$;
    % \item \label{eq:nl_4} Whenever $u_1,u_2$ satisfy $u_1-u_2 \in W^{k_{\A}}_{p,0}$, then \[
    % \int_{\Omega} \Theta(u_1) - \Theta(u_2) \dd x =0.
    % \]
\end{enumerate}
    
We study the following set of constraints: 
\begin{equation} \label{const:semilin}
\begin{cases}
    \A_1 v_1 = 0 & 
    \\
    v_1 = \B_1 u_1 & 
    \\
    \A_2 v_2 = \A_2 \Theta(u_1).&
\end{cases}
\end{equation}

\begin{theorem} \label{thm:relaxationnonlinear}
Let $\funct \colon \Omega \times \R^{m_1} \times Y \to \R$ satisfy the assumptions \ref{hypo:funct1}--\ref{hypo:funct2} from Proposition \ref{prop:relaxationlinear} and let $\Theta \colon L_p(\Omega;\R^{m_1}) \to W^{-1}_r(\Omega;\R^{l_2})$ and $\A_1$, $\A_2$ satisfy the aforementioned hypotheses \ref{eq:nl_3}--\ref{eq:nl_4}. 
Suppose that $u_1 \in W^{k_1}_p(\Omega;\R^{h_1})$ and $v=(v_1,v_2) \in L_p(\Omega;\R^{m_1}) \times L_q(\Omega;Y \times \R)$, such that $u_1 = \B_1 v_1$ and $\A_2 v_2 = \Theta(u_1)$. 
Then, for all $\varepsilon > 0$, there exist bounded sequences $u_{1,n}^{\varepsilon} \subset W^{k_1}_p(\Omega;\R^{h_1})$ and $v_{n}^{\varepsilon}  \subset L_p(\Omega;\R^{m_1}) \times L_q(\Omega;Y \times \R)$ such that 
\begin{enumerate} [label=(\roman*)]
     \item \label{relaxnl:1} $\B_1 u_{1,n}^{\varepsilon} = v_{1,n}^{\varepsilon}$;
     \item \label{relaxnl:2}$u_{1,n}^{\varepsilon} - u_1$ is supported in $\Omega_n \subset \subset \Omega$;
     \item \label{relaxnl:3}$\A_2 v_{2,n}^{\varepsilon} = \A_2\Theta(u_{1,n}^{\varepsilon})$;
     \item \label{relaxnl:4} $v_{2,n}^{\varepsilon}-v_2$ is supported in $\Omega_n \subset \subset \Omega$;
     \item $v_{n}^{\varepsilon}$ is almost a recovery sequence, i.e. it satisfies
    \[
        \int_{\Omega} \QA \funct(x,v) \dd x \geq \lim_{n \to \infty} \int_{\Omega} \funct(x,v_{n}^{\varepsilon}) \dd x - \varepsilon.
     \]
 \end{enumerate}
\end{theorem}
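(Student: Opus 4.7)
The plan is to first invoke the linear relaxation result, Proposition~\ref{prop:relaxationlinear}, and then patch the resulting sequence so that the semilinear coupling $\A_2 v_{2,n}^\varepsilon = \A_2 \Theta(u_{1,n}^\varepsilon)$ is enforced in the modified version. Throughout, I exploit that both parts of Corollary~\ref{coro:boundaryequiint} deliver \emph{compactly supported} corrections, which is the key to making the Nemytskii-type operator $\Theta$ behave well on the modified sequence.

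Fix $\varepsilon>0$. Applying Proposition~\ref{prop:relaxationlinear} with the linear right-hand sides $\A_1 v_1=0$ and $\A_2 v_2 = \A_2 \Theta(u_1)$ yields a bounded sequence $(\tilde v_{1,n},\tilde v_{2,n}) \rightharpoonup (v_1,v_2)$ in $L_p \times L_q$ satisfying $\A_1 \tilde v_{1,n}=0$, $\A_2 \tilde v_{2,n}=\A_2 v_2$, and
\[
    \int_\Omega \QA \funct(x,v)\dd x
    \geq
    \lim_{n\to\infty}\int_\Omega \funct(x,\tilde v_{1,n},\tilde v_{2,n})\dd x - \tfrac{\varepsilon}{2}.
\]
Corollary~\ref{coro:boundaryequiint}\ref{item:cor1} applied to $\tilde v_{1,n}$ (using the relation $v_1=\B_1 u_1$) produces $u_{1,n}^\varepsilon \in W^{k_{\B_1}}_p(\Omega;\R^{h_1})$ such that $u_{1,n}^\varepsilon - u_1$ is compactly supported in some $\Omega_n \subset\subset \Omega$, the sequence $\B_1 u_{1,n}^\varepsilon$ is $p$-equi-integrable, and $\|\B_1 u_{1,n}^\varepsilon - \tilde v_{1,n}\|_{L_s} \to 0$ for some $1<s<p$. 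Setting $v_{1,n}^\varepsilon := \B_1 u_{1,n}^\varepsilon$ delivers items~\ref{relaxnl:1}--\ref{relaxnl:2}; combined with Korn--Poincar\'e-type bounds on the potential and the fact that $u_{1,n}^\varepsilon - u_1$ vanishes near $\partial \Omega$, this implies $u_{1,n}^\varepsilon \rightharpoonup u_1$ weakly in $W^{k_{\B_1}}_p$, and hypothesis~\ref{eq:nl_4} then furnishes $\Theta(u_{1,n}^\varepsilon) \to \Theta(u_1)$ strongly in some $L_r$ with $r>q$.

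For the second component I use two successive corrections. First, Corollary~\ref{coro:boundaryequiint}\ref{item:cor2} applied to $\tilde v_{2,n}$ produces $\bar v_{2,n}$ with $\A_2 \bar v_{2,n}=\A_2 v_2$, with $\bar v_{2,n}-v_2$ compactly supported in $\Omega_n$, $q$-equi-integrable, and $\|\bar v_{2,n}-\tilde v_{2,n}\|_{L_s}\to 0$ for some $s<q$. Second, I set $\xi_n := \Theta(u_{1,n}^\varepsilon) - \Theta(u_1)$; since $\theta$ acts pointwise (hypothesis~\ref{eq:nl_3}) and $u_{1,n}^\varepsilon\equiv u_1$ outside $\Omega_n$, the correction $\xi_n$ is itself compactly supported in $\Omega_n$ and $\xi_n \to 0$ in $L_r \subset L_q$. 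Defining $v_{2,n}^\varepsilon := \bar v_{2,n} + \xi_n$, one immediately computes
\[
    \A_2 v_{2,n}^\varepsilon
    = \A_2 v_2 + \A_2\bigl(\Theta(u_{1,n}^\varepsilon)-\Theta(u_1)\bigr)
    = \A_2\Theta(u_{1,n}^\varepsilon),
\]
and the support of $v_{2,n}^\varepsilon-v_2$ is contained in $\Omega_n$, so items~\ref{relaxnl:3}--\ref{relaxnl:4} follow.

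It remains to verify that the new pair $v_n^\varepsilon := (v_{1,n}^\varepsilon,v_{2,n}^\varepsilon)$ still inherits the $\varepsilon$-recovery property. By construction it is $(p,q)$-equi-integrable and bounded in $L_p\times L_q$, while its difference with the initial sequence $(\tilde v_{1,n},\tilde v_{2,n})$ tends to $0$ in $L_{s_0}$ for some $s_0<\min(p,q)$, hence in measure. Applying the argument of Step~1 in the proof of Theorem~\ref{theorem:equiint} (essentially Proposition~\ref{proposition:equiintegrablewlsc} applied to the Carath\'eodory integrand $\funct$ with $(p,q)$-growth~\ref{hypo:funct2}) yields
\[
    \limsup_{n\to\infty}\int_\Omega \funct(x,v_n^\varepsilon)\dd x
    \leq
    \liminf_{n\to\infty}\int_\Omega \funct(x,\tilde v_{1,n},\tilde v_{2,n})\dd x,
\]
which, combined with the almost-recovery inequality from Proposition~\ref{prop:relaxationlinear}, closes the argument. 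I expect the main obstacle to sit in this final step: the correction $\xi_n$ is small only in $L_r$ with $r>q$ and, since $\funct$ only has $|v_2|^q$ growth, no $L_q$-Lipschitz estimate on $\funct$ in the second slot is directly available; one genuinely has to lean on the $(p,q)$-equi-integrability of the modified sequence via a Vitali-type passage to the limit to absorb the growth.
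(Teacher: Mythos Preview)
Your proposal is correct and follows essentially the same route as the paper: invoke the linear relaxation result, use Corollary~\ref{coro:boundaryequiint} to obtain equi-integrable modifications with compactly supported differences, and then add the correction $\xi_n=\Theta(u_{1,n}^\varepsilon)-\Theta(u_1)$ to enforce the semilinear constraint. Your closing worry is unfounded: since $r>q$ and $\Omega$ is bounded, the convergence $\xi_n\to 0$ in $L_r$ already gives $\xi_n\to 0$ in $L_q$, and together with the $q$-equi-integrability of $\bar v_{2,n}$ this makes the application of Proposition~\ref{proposition:equiintegrablewlsc} straightforward, exactly as in the paper.
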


\begin{remark}
     \begin{enumerate}[label=(\roman*)]
         \item The statement of Theorem \ref{thm:relaxationnonlinear} is quite strong concerning boundary conditions. Indeed, the recovery sequence consisting of $u_{1,n}^\varepsilon$ and $v_{2,n}^\varepsilon$ preserves both the boundary conditions of $u_1$ \emph{and} the boundary conditions of $v_2$. Thus, it is possible to use the statement independently of the particular boundary conditions (Dirichlet, Neumann, \dots) in Section~\ref{sec:diffconst}.
        \item Remark \ref{remark:coerc} is still valid in the setting of Theorem \ref{thm:relaxationnonlinear}. More precisely, if we have a coercivity condition on the functional restricted to functions obeying \ref{const:semilin} and some boundary conditions, then we may find a recovery sequence satisfying \ref{relaxnl:1}--\ref{relaxnl:4} and 
        \[
            \int_{\Omega} \QA \funct(x,v) \dd x \geq \lim_{n \to \infty} \int_{\Omega} \funct(x,v_n) \dd x.
        \]
        % \item \rnote{I think there are some things coming out of nowhere here} In the specific setting of Theorem \ref{thm:relaxationnonlinear} one only needs coercivity in $v_1$ and in the viscous part $\tigma$. Given $v=(v_1,v_2)=(v_1,(\tigma,\pi))$, the $L_q(\Omega)$-norm of the pressure $\pi_n^{\epsilon}$ may be bounded by 
        % \[
        %     \Vert \pi_n^{\varepsilon} \Vert_{L_q} \lesssim \Vert \nabla \pi_n^{\varepsilon} \Vert_{W^{-1}_q} + \Vert \pi \id \nu \Vert_{W^{-1/q}_q(\partial \Omega)}  \lesssim \Vert \tigma_n^{\varepsilon} \Vert_{L_q} + \Vert \Theta(u_{1,n}^{\varepsilon}) \Vert_{W^{-1}_q} + \Vert (\tigma - \pi \id)\nu \Vert_{W^{-1/q}_q(\partial \Omega)}.
        % \]
        % In particular, if $\Theta(v_1)$ can be bounded in terms of $\|v_1\|_{L_p}$, then it suffices to consider a coercivity condition of the form 
        % \begin{equation*}
        %     (v_1,v_2) \in \Ccal,~(\Vert v_1 \Vert_{L^p} + \Vert \sigma \Vert_{L^q} \to \infty) \quad \Longrightarrow \quad \int_{\Omega} \funct(x,v) \dd x \to \infty.
        % \end{equation*}
     \end{enumerate}
\end{remark}

\begin{proof}[\textbf{Proof of Theorem \ref{thm:relaxationnonlinear}}]
By the linear relaxation result Proposition \ref{prop:relaxationlinear} there exists a sequence $(\bar{v}_{1,n},\bar{v}_{2,n}) \subset L_p(\Omega;\R^{m_1}) \times L_q(\Omega;Y \times \R)$ weakly converging to $v=(v_1,v_2)$ satisfying 
\[
\begin{cases}
    \A_1 \bar{v}_{1,n}^{\varepsilon} = 0 & \\
    \A_2 \bar{v}_{2,n}^{\varepsilon}  = \A_2 v_2 =\A_2 \Theta(u_1) & \\
    \int_{\Omega} \QA \funct(x,v) \dd x \geq \lim_{n \to \infty} \int_{\Omega} \funct(x,v_n) \dd x - \varepsilon &
\end{cases}
\]
By Proposition \ref{proposition:equiintegrablewlsc} and Corollary \ref{coro:boundaryequiint} we may take $\tilde{u}_{1,n}^{\varepsilon} \in W^{k_1}_p(\Omega;\R^h)$, and $\tilde{v}_n \in L_p(\Omega;\R^{m_1}) \times L_q(\Omega;Y \times \R)$, such that 
\begin{enumerate}[label=(\roman*)]
    \item $\tilde{v}_{1,n}^{\varepsilon}= \B_1 \tilde{u}_{1,n}^{\varepsilon}$;
    \item the first $k_1$-derivatives of $\tilde{u}_{1,n}^{\varepsilon}$ are $p$-equi-integrable;
    \item $\tilde{v}_{2,n}^{\varepsilon}$ is $q$-equi-integrable;
    \item $\A_2 \tilde{v}_{2,n}^{\varepsilon}= \A_2 \Theta(u_1)$;
    \item the functions $\tilde{u}_{1,n}^{\varepsilon}$ and $\tilde{v}_{2,n}^{\varepsilon}$ satisfy the boundary conditions 
    \[
        \begin{cases}
             \mathrm{spt}(\tilde{u}_{1,n}^{\varepsilon}-u_1) \subset \Omega_n & 
             \\
            \mathrm{spt}(\tilde{v}_{2,n}^{\varepsilon} -v_2) \subset \Omega_n
        \end{cases}
    \]
    for some $\Omega_n \subset \subset \Omega$;
    \item $\int_\Omega \QA \funct(x,v)\dd x \geq \lim_{n\to \infty} \int_\Omega \funct(x,\tilde v_n^{\varepsilon})\dd x - \varepsilon$.
\end{enumerate}
We set $v_1^n=\tilde{v}_{1,n}^{\varepsilon}$ and $u_{1,n}^{\varepsilon}=\tilde{u}_{1,n}^{\varepsilon}$ and modify $\tilde{v}_{2,n}^{\varepsilon}$ by 
\[
    v_{2,n}^{\varepsilon}= \tilde{v}_{2,n}^{\varepsilon} +w_{2,n}^{\varepsilon}
\]
such that $\A_2 v_{2,n}^{\varepsilon} = \Theta(u_{1,n}^{\varepsilon})$. In particular, we solve the following equation:
\begin{equation} \label{diffequation:w}
    \begin{cases}
        \A_2 w_{2,n}^{\varepsilon} = \A_2(\Theta(v_{1,n}^{\varepsilon}) - \Theta(v_1)), 
        & x\in \Omega \\
       \mathrm{spt}(\tilde{w}_{2,n}^{\varepsilon} -v_2) \subset \subset \Omega
 \end{cases}
\end{equation}
But we know that $w_{2,n}^{\varepsilon} = \Theta(u_{1,n}^{\varepsilon}) - \Theta(u_1)$ already is a solution to this system. As $u_{1,n}^{\varepsilon}-u_1$ is supported inside $\Omega_n \subset \subset \Omega$, so is $u_{1,n}^{\varepsilon}$ due to the definition of the map $\Theta$, cf. \ref{eq:nl_3} and \ref{eq:nl_4}. Due to weak-strong continuity we have 
\[
 \Vert w_{2,n}^{\varepsilon} \Vert_{L_r} 
   =
     \Vert \Theta(u_{1,n}^{\varepsilon}) - \Theta(v_1) \Vert_{L_r} \longrightarrow 0 \quad \text{as } n \to \infty.
\]
% This equation can be solved as follows. Define the second order operator $L \colon W^1_r(\Omega;\R^d) \to W^{-1}_r(\Omega;\R^d)$ as $L = \A_2 \circ \A_2^{\ast}$ for $\A_2^{\ast} = 1/2 ( \nabla +\nabla^T)$. This is a second order elliptic operator with constant coefficients. Consequently, due to assumption \ref{eq:nl_4}, the system 
% \[
% \begin{cases} L U = \Theta(u_{1,n}^{\varepsilon}) - \Theta(u_1), & x \in \Omega \\
% \tfrac{1}{2}(D U + (DU)^T) \nu = 0, & x \in \partial \Omega, \end{cases}
% \]
% is solvable, cf. \cite{ADN64}. Setting $w_{n,2}^{\epsilon} = \A_2^{\ast} U$, we obtain a solution satisfying the bound
% \[
%     \Vert w_{2,n}^{\varepsilon} \Vert_{L_r} 
%     \leq 
%     \Vert \Theta(u_{1,n}^{\varepsilon}) - \Theta(v_1) \Vert_{W^{-1}_r}.
% \]
%  We will show the following \\
%  \textbf{Claim:} \textit{There exists a solution $w_2^n$ to \eqref{diffequation:w} in $L_r(\Omega;\R^{m_2})$, such that \[
%   \Vert w_2^n \Vert_{L_r} \leq \Vert \Theta(v_1^n) - \Theta(v_1) \Vert_{W^{-1}_r}
%  \]
%   }
 Then $v_{2,n}^{\varepsilon} := \tilde{v}_{2,n}^{\varepsilon} + w_{2,n}^{\varepsilon}$ still is $q$-equi-integrable, as $\tilde{v}_{2,n}^{\varepsilon}$ is $q$-equi-integrable and $w_{2,n}^{\varepsilon}$ bounded in $L_r(\Omega;Y \times \R)$ for some $r>q$; hence also $p$-equi-integrable. Moreover, as $v_{1,n}^{\varepsilon} \rightharpoonup v_1$ in $L_p(\Omega;Y)$ and $\Theta$ is weak-strong continuous,
\[
\Vert \tilde{v}_{2,n}^{\varepsilon} - v_{2,n}^{\varepsilon} \Vert_{L_r} = \Vert w_{2,n}^{\varepsilon} \Vert_{L_r} \longrightarrow 0 \quad \text{as } n\to \infty,
\]
and we conclude by Proposition \ref{proposition:equiintegrablewlsc} that \[
\liminf_{n \to \infty} \int_{\Omega} \funct(x,v_{1,n}^{\varepsilon},v_{2,n}^{\varepsilon}) \dd x \leq \liminf_{n \to \infty} \int_{\Omega}\funct(x,\tilde{v}_{1,n}^{\varepsilon}, \tilde{v}_{2,n}^{\varepsilon}) \dd x.
\]
As $\tilde{v}_{2,n}^{\varepsilon} - v_2$ is compactly supported in $\Omega$, $v_{2,n}^{\varepsilon} -v_2$ satisfies the demanded boundary conditions and $\A v_{2,n}^{\varepsilon} =\A_2 \Theta(v_{1,n}^{\varepsilon})$. Hence, (up to a subsequence) $v_n^{\epsilon}$ is almost a recovery sequence.

% It remains to prove the Claim. We use that the Fourier symbol $\A_2[\xi]$ is surjective. Conversely, the Fourier symbol $\A_2^T[\xi]$ of the adjoint of $\A_2$ is injective. Hence, the operator, \[
% L = \A_2 \circ \A_2^T \colon W^{1}_r(\Omega;\R^{l_2}) \to W^{-1}_r(\Omega;\R^{l_2})
% \]
% is a symmetric, constant coefficient, elliptic differential operator. Instead of solving \eqref{diffequation:w} we may also solve \begin{equation} \label{diffequation:u}
% \left\{ \begin{array} {rcll}
% L u_2^n &=& \Theta(v_1^n) - \Theta(v_1) &\text{in } \Omega \\
% \A_2[\nu] (\A_2 u_2^n)&=&0& \text{on } \partial \Omega.
% \end{array} \right.
%     \end{equation} 
% This differential equation can be solved (c.f. \textcolor{magenta}{CITATION NEEDED}) and the solution satisfies the bound \[
% \Vert \nabla u_2^n \Vert_{L_r} \leq \Vert \Theta(v_1^n) - \Theta(v_1) \Vert_{W^{-1}_r}
% \]
% Thus, $w_2^n = \A_2^T u_2^n$ satisfies the required bound.
 \end{proof}

\begin{remark}
     The statement of Theorem \ref{thm:relaxationnonlinear} is taylored towards its application for fluid dynamics, cf. Subsection \ref{subsec:semilin}. Observe that in the proof of Theorem \ref{thm:relaxationnonlinear}, a main step was to solve the differential equation \begin{equation} \label{solvediffeq}
             \A_2 w = \A_2 \bigl(\Theta(u_{1,n}^{\varepsilon})- \Theta(u_1)\bigr)
         \end{equation}
         together with suitable boundary conditions. This equation is solved by the observation, that $(\Theta(u_{1,n}^{\varepsilon})- \Theta(u_1))$ already satisfies the boundary conditions.
         
    If we generalise the setting to other non-linearities, we need more assumptions on the non-linearity. For example, consider a constraint like \[
    \begin{cases}
    \A_1 v_1 = 0 & 
    \\
    v_1 = \B_1 u_1 & 
    \\
    \A_2 v_2 =  \zeta(u_1).
\end{cases}
    \]
    for some map $\zeta \colon W^{k_{\B_1}}_p(\Omega;\R^{h_1}) \to W^{-k_{\A_1}}_q(\Omega;\R^{h_2})$. Then weak-strong continuity is not enough, as one also needs to solve the analogue of \eqref{diffequation:w} with suitable boundary conditions. If for example, $\A_2 = \diverg$, then a further condition is as follows: Whenever $u_1$ and $u'_1$ satisfy $\mathrm{spt}(u_1-u'_1) \subset \subset \Omega$, then $\int \zeta(u_1)-\zeta(u'_1) \dd x =0$ (such that the divergence-equation is solvable, cf. \cite{Bogovski}).

\end{remark}

%%%%%%%%%%%%%%%%%%%%%%%%%%%%%%%%%%%%%%%%%%%%%%%%%%%%%%%%%%%%%%%%%%%%%%%%%
%%%%%%%%%%%%%%%%%%%%%%%%%%%%%%%%%%%%%%%%%%%%%%%%%%%%%%%%%%%%%%%%%%%%%%%%%
%%%%%%%%%%%%%%%%%%%%%%%%%%%%%%%%%%%%%%%%%%%%%%%%%%%%%%%%%%%%%%%%%%%%%%%%%%%%%%%%%%%%%%%%%%%%%%%%%%%%%%%%%%%%%%%%%%%%%%%%%%%%%%%%%%%%%%%%%%%%%%%%%%
%%%%%%%%%%%%%%%%%%%%%%%%%%%%%%%%%%%%%%%%%%%%%%%%%%%%%%%%%%%%%%%%%%%%%%%%%

\section{Convergence of data sets} \label{sec:dataconv} 
In this section, we define two different notions of \textit{data convergence}, i.e. we define a suitable topology on closed subsets of $Y\times Y$. We show that these notions are equivalent to convergence of the unconstrained functionals $J$ in \eqref{intro:J}. In particular, these notions of data convergence are independent of the underlying differential constraint. Recall that we assume that the data consist of pairs of strain $\epsilon$ and the viscous part $\tigma$ of the stress; the pressure $\pi$ is not part of the data.
\subsection{Data convergence on bounded sets} \label{sec:dataconvbd}
 
\begin{definition} \label{def:boundedconv}
Let $Y \times Y $ be equipped with the metric $d\colon Y \times Y \to \R$ and $(\D_n), \D$ be closed, nonempty subsets of $Y\times Y$. We say that $\D_n$ converges to $\D$ strongly in the topology $\topobd$, $\D_n \limdata \D$, if the following is satisfied:
\begin{enumerate} [label=(\roman*)]
    \item \textbf{Uniform approximation:} There exists a sequence $a_n \to 0$, such that for all $z=(\epsilon, \tigma) \in \D$ it holds
    \[
    \dist(z,\D_n) \leq a_n (1  + \vert \epsilon \vert^p + \vert \tigma \vert^q ).
    \]
    \item \textbf{Fine approximation:} There exists a sequence $b_n \to 0$, such that for all $z_n=(\epsilon_n,\tigma_n) \in \D_n$ it holds
    \[
    \dist(z_n,\D) \leq b_n (1 + \vert \epsilon_n \vert^p + \vert \tigma_n \vert^q).
    \]
\end{enumerate}
\end{definition}

We consider the functionals defined on $V$ by \[
    J(v) = \int_{\Omega} \dist(v,\D) \dd x
    \quad \text{and} \quad
    J_n(v) = \int_{\Omega} \dist(v,\D_n) \dd x.
\]

\begin{theorem} \label{thm:equivconvergence}
Let $\D_n,\D$ be closed, nonempty subsets of $Y\times Y$. The following statements are equivalent:
\begin{enumerate}[label=(\roman*)]
    \item $\D_n \limdata \D$;
    \item For all $v \in V$ it holds that
    \[
        \lim_{n \to \infty} J_n(v) = J(v)
    \]
    and this convergence is uniform on bounded subsets of $V$.
\end{enumerate}
\end{theorem}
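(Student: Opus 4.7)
My plan is to prove the two implications separately. Write $r=\max(p,q)$; by the preceding lemma $d=\dist^{1/r}$ is a genuine metric on $Y\times Y$. The forward direction will be a pointwise comparison of $\dist(v(x),\D_n)$ and $\dist(v(x),\D)$, integrated to yield uniform convergence on bounded subsets of $V$. The reverse direction will probe $J_n-J$ against step functions of the form $z\,\ONE_{E_z}$, with the size of $E_z$ tuned so that $\|z\,\ONE_{E_z}\|_V$ stays bounded as $z$ varies over all of $\D$ or $\D_n$.

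For $(i)\Rightarrow(ii)$, fix $v=(\epsilon,\tigma)\in V$ with $\|v\|_V\le M$. For a.e.\ $x\in\Omega$ and any $\eta>0$, select $z=(\epsilon_z,\tigma_z)\in\D$ with $\dist(v(x),z)\le\dist(v(x),\D)+\eta$; the uniform-approximation clause of (i) then supplies $z'\in\D_n$ with $\dist(z,z')\le a_n(1+|\epsilon_z|^p+|\tigma_z|^q)+\eta$. The triangle inequality for $d$ combined with the Young-type estimate $(\alpha+\beta)^r\le(1+\varepsilon)\alpha^r+C_\varepsilon\beta^r$ yields
\[
    \dist(v(x),\D_n)\le d(v(x),z')^r\le (1+\varepsilon)\dist(v(x),z)+C_\varepsilon\dist(z,z').
\]
Using $|\epsilon_z|^p\le 2^{p-1}\bigl(|\epsilon(x)|^p+p\dist(v(x),z)\bigr)$ and its analogue for $|\tigma_z|^q$, letting $\eta\to 0$, and absorbing the extra $C_\varepsilon a_n\dist(v(x),\D)$-term into the leading part (valid for $n$ large so that $C_\varepsilon a_n<\varepsilon$), I obtain
\[
    \dist(v(x),\D_n)\le (1+2\varepsilon)\dist(v(x),\D)+C_\varepsilon a_n\bigl(1+|\epsilon(x)|^p+|\tigma(x)|^q\bigr).
\]
Integration, combined with the symmetric estimate using the fine-approximation sequence $b_n$, gives
\[
    |J_n(v)-J(v)|\le 2\varepsilon\bigl(J(v)+J_n(v)\bigr)+C_\varepsilon(a_n+b_n)\bigl(|\Omega|+\|\epsilon\|_{L_p}^p+\|\tigma\|_{L_q}^q\bigr).
\]
Since both $J(v)$ and $J_n(v)$ are a priori bounded on $\{\|v\|_V\le M\}$ (from $\dist(v(x),\D)\le\dist(v(x),z_0)\le C(1+|\epsilon(x)|^p+|\tigma(x)|^q)$ for any fixed $z_0\in\D$, and similarly for $\D_n$ via (i)), sending first $n\to\infty$ and then $\varepsilon\to 0$ yields the required uniform convergence.

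For $(ii)\Rightarrow(i)$, given $z=(\epsilon_z,\tigma_z)\in\D$, set $|E_z|=\min\bigl(|\Omega|,(1+|\epsilon_z|^p+|\tigma_z|^q)^{-1}\bigr)$, pick a measurable $E_z\subset\Omega$ of this measure, and test against $v_z:=z\,\ONE_{E_z}$. A short computation shows $\|v_z\|_V\le M_0$ for a constant $M_0$ independent of $z$. Since $z\in\D$,
\[
    J_n(v_z)-J(v_z)=|E_z|\dist(z,\D_n)+(|\Omega|-|E_z|)\bigl(\dist(0,\D_n)-\dist(0,\D)\bigr).
\]
Setting $\mu_n:=\sup_{\|v\|_V\le M_0}|J_n(v)-J(v)|\to 0$ and testing (ii) on $v\equiv 0$ to obtain $|\dist(0,\D_n)-\dist(0,\D)|\le\mu_n/|\Omega|$, one finds $|E_z|\dist(z,\D_n)\le 2\mu_n$ and therefore $\dist(z,\D_n)\le C\mu_n\bigl(1+|\epsilon_z|^p+|\tigma_z|^q\bigr)$, with $C$ depending only on $|\Omega|$. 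This is the uniform approximation with $a_n:=C\mu_n\to 0$. The analogous construction starting from $z_n\in\D_n$ yields the fine-approximation sequence $b_n$.

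The main obstacle is the careful treatment of the non-metric character of $\dist$ in the forward direction: a naive triangle inequality applied to $\dist$ itself produces a multiplicative constant $2^{r-1}>1$ in front of $\dist(v(x),\D)$, which does not close under integration to uniform convergence. The Young-type inequality trick replaces this by $1+\varepsilon$, at the price of a constant $C_\varepsilon$ multiplying the small parameter $a_n$; uniformity in $v$ is then recovered only by executing the double limit $n\to\infty$, $\varepsilon\to 0$ in the correct order and keeping $v$ within a fixed bounded subset of $V$. In the reverse direction, the subtle point is to ensure that the family $\{v_z\}_{z\in\D}$ lies in a single bounded ball of $V$, which is precisely why $|E_z|$ must be tuned to the growth $1+|\epsilon_z|^p+|\tigma_z|^q$.
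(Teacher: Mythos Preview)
Your proof is correct and follows essentially the same approach as the paper: both directions rely on the same ideas (the Young-type inequality $(\alpha+\beta)^r\le(1+\varepsilon)\alpha^r+C_\varepsilon\beta^r$ to handle the non-metric $\dist$ in the forward direction, and step functions $z\,\ONE_{E_z}$ with $|E_z|\sim(1+|\epsilon_z|^p+|\tigma_z|^q)^{-1}$ in the reverse). The only notable difference is that your $(ii)\Rightarrow(i)$ is a direct argument giving explicit $a_n,b_n$ in terms of $\mu_n$, whereas the paper argues by contradiction after normalising to $0\in\D$; your version is slightly cleaner and avoids that normalisation by separately testing $v\equiv 0$ to control the background term $\dist(0,\D_n)-\dist(0,\D)$.
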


\begin{proof}
\textbf{`(i) $\Rightarrow$ (ii)'. } Suppose without loss of generality that $0 \in \D$. Otherwise we translate the underlying space which at most changes $a_n,b_n$ by a bounded factor. Let $v \in V$, with $\int_{\Omega} \dist(v,0) \dd x \leq R$. We assume without loss of generality that $p\ge q$. Then for $n \in \N$ we may estimate
\begin{align*}
    \int_{\Omega} \dist(v,\D) \dd x 
    &= 
    \int_{\Omega} d(v,\D)^p \dd x 
    \leq \int_{\Omega} \left(d(v,w_n) + d(w_n,\D) \right)^p \dd x, 
\end{align*}
where $w_n(x) \in \D_n$ is a point in $\D_n$ such that  $d(v(x),w_n(x)) = d(v(x),\D_n)$. Note that, as $0 \in \D$ and due to the uniform approximation property, we obtain a pointwise bound on $w_n$, i.e. $d(w_n(x),0) \leq 2 d(v(x),0)$ for $n$ large enough. Therefore, for some $\varepsilon >0$ we get \begin{align*}
    \int_{\Omega} \dist(v,\D) \dd x 
    & \leq 
    \int_{\Omega} \left(d(v, \D_n) + b_n \bigl(1+ \dist(w_n,0)\bigr)^{1/p})\right)^p \dd x \\
    &\leq 
    \int_{\Omega} \left(d(v,\D_n) + 2b_n \bigl(1+ \dist(v,0)\bigr)^{1/p}\right)^p \dd x \\
    &\leq (1+\varepsilon) \int_{\Omega} d(v,\D_n)^p + C(\varepsilon,p) b_n^p \bigl(1+ \dist(v,0)\bigr) \dd x \\
    &\leq  \int_{\Omega} \dist(v,\D_n) \dd x + \left(\varepsilon  \int_{\Omega} \dist(v,\D_n) \dd x+ C(\varepsilon,p) b_n^p (1 +R) \right).
\end{align*}
Note that $\int_{\Omega} d(v,\D_n)^p \dd x $ is bounded from above (for $n$ large enough) by $2 \int_{\Omega} d(v,0)^p \dd x \leq 2R$ as $0 \in \D$ and $0$ is approximated uniformly by elements of $\D_n$. Therefore, for any $\delta >0$ we may choose $\varepsilon$ and $n_0 \in \N$ such that for all $n > n_0$ we have
\[
    \varepsilon  \int_{\Omega} \dist(v,\D_n) \dd x < \frac{\delta}{2} 
    \quad \text{and} \quad
    C(\varepsilon,p) b_n^p (1 +R) < \frac{\delta}{2}.
\]
Consequently, there exists $\delta(R,n) \to 0$, such that for all $v \in V$ with $\int_{\Omega} \dist(v,0) \dd x \leq R$ it holds that
\begin{equation}
  J (u) \leq J_n(v) + \delta(R,n).  
\end{equation}
For the lower bound on $J(v)$ we can do the same calculation using fine instead of uniform approximation and find that for any $v \in V$ with $\int_{\Omega} \dist(v,0) \dd x \leq R$ we have \[
    \int_{\Omega} \dist(v,\D_n) \dd x \leq  \int_{\Omega} \dist(v,\D) \dd x + \left(\varepsilon  \int_{\Omega} \dist(v,\D) \dd x+ C(\varepsilon,p) a_n^p (1 +R) \right).
\]
We argue as for the lower bound, to obtain $\tilde{\delta}(R,n) \to 0$, such that for all $v \in V$ with $\int \dist(v,0) \dd x \leq R$ \begin{equation}
    J_n(v) \leq J(v) + \tilde{\delta}(R,h).
\end{equation}
Therefore, the convergence $J_n(v) \to J(v)$ is uniform on bounded subsets of $V$.
\bigskip

\noindent\textbf{`(ii)$\Rightarrow$ (i)'. } We prove the statement by contradiction. Suppose first, that $\D$ is \textit{not} uniformly approximated, i.e. there exists $a >0$ and a subsequence $z_{n_k}=(\epsilon_{n_k},\tigma_{n_k}) \subset \D$, such that 
\[
    \dist(z_{n_k},\D_{n_k}) > a \bigl(1 + \vert \epsilon_{n_k} \vert^p + \vert \tigma_{n_k} \vert^q\bigr) 
    = a \bigl(1 + \dist(z_{n_k},0)\bigr).
\]
We assume without loss of generality that $0 \in \D$. 
Let $\Sigma_{n_k}$ be a subset of $\Omega$ with measure $\vert \Omega \vert(1+\dist(z_{n_k},0))^{-1}$. We define 
\[
v_{n_k}(x):= \begin{cases}
    0, & x \notin \Sigma_{n_k} \\
    z_{n_k}, & x \in \Sigma_{n_k}.
\end{cases}
\]
Then $\int_{\Omega} \dist(v_{n_k},0)$ is bounded uniformly from above by $\vert \Omega \vert$. Furthermore, \begin{align*}
    \int_{\Omega} \dist(v_{n_k},\D) = 0, 
    \quad 
    k \in \N.
\end{align*}
On the other hand,
\begin{align*}
    \int_{\Omega} \dist(v_{n_k},\D_{n_k}) 
    &\geq \int_{\Sigma_{n_k}} \dist(z_{n_k}, \D_{n_k}) 
    \geq \vert \Sigma_{n_k} \vert \cdot  a \bigl(1 + \dist(z_{n_k},0)\bigr) 
    \geq \vert \Omega \vert a.
\end{align*}
  Therefore, $J_n(v)$ does not converge to $J(v)$ uniformly on bounded sets of $V$.
  \medskip
  
If $\D_n$ is \emph{not} a fine approximation of $\D$, the argument is similar. Then there exists $b>0$ and a subsequence $z_{n_k} \in\D_{n_k}$, such that, 
\[
    \dist(z_{n_k},\D) > b\bigl(1 + \dist(z_{n_k},0)\bigr).
\]
Again, assume that $0 \in \D$. We may assume that there exists a sequence $z'_n \to 0$ with $z'_n \in \D_n$, otherwise for $v \equiv 0$, it holds that
\[
    \limsup_{h \to \infty} \int_{\Omega} \dist(v,\D_n) \dd x> 0 
    = 
    \int_{\Omega} \dist(v,\D) \dd x.
\]
Let $\Sigma_{n_k}$ be a subset of $\Omega$ with measure $\vert \Omega \vert(1+\dist(z_{n_k},0))^{-1}$ and define 
\[
    v_{n_k}(x) := \begin{cases}
    0, & x \notin \Sigma_{n_k} \\
    z_{n_k}, & x \in \Sigma_{n_k}.
\end{cases}
\]
As argued before, $\int_{\Omega} \dist(v_{n_k},\D) \dd x $ is bounded uniformly by $\vert \Omega \vert$ and for $k \in \N$ we find that
\[
    \int_{\Omega} \dist(v_{n_k},\D_{n_k}) \dd x
    = \int_{\Omega \setminus \Sigma_{n_k}} \dist(0,\D_{n_k}) \dd x \longrightarrow 0 
    \quad \text{ as } k \to \infty.
\]
But, for the distance to $\D$ we have \begin{align*}
    \int_{\Omega} \dist(v_{n_k},\D) = \int_{\Sigma_{n_k}} \dist(z_{n_k},\D) 
    \geq 
    \vert \Sigma_{n_k} \vert \cdot  b\bigl(1 + \dist(z_{n_k},0)\bigr) =
    b \vert \Omega \vert. 
\end{align*}
Therefore, the convergence $J_n(v) \to J(v)$ cannot be uniform on bounded subsets of $V$.
\end{proof}

The definition of this type of convergence is motivated by Lemma \ref{gammaconv:1}. In particular, we have as a consequence that if $\D_n \limdata \D$, then the sequential $\Gamma$-limit of $J_n$ and of the constant sequence $J$ coincide, i.e 
\begin{align*}
    \Gamma-\lim_{n \to \infty} J_n &= \Gamma-\lim_{n \to \infty} J.
\end{align*}

%----------------------------------
%----------------------------------
\bigskip

\subsection{Data convergence on equi-integrable sets} \label{sec:dataconveq}

\begin{definition} \label{def:equiconv}
We say that a sequence of closed sets $\D_n \subset Y \times Y$ converges to $\D$ in the $\topoeq$-topology, $\D_n \limeq \D$, if there are sequences $a_n,b_n \to 0$ and $R_n, S_n \to \infty$ such that the following is satisfied: \begin{enumerate}[label=(\roman*)]
    \item \textbf{Uniform approximation on bounded sets:} For all $z \in \D$ with $\dist(z,0) < R_n$ we have 
    \[
    \dist(z, \D_n) \leq a_n (1 + \vert \epsilon \vert^p + \vert \tigma \vert^q). 
    \]
    \item \textbf{Fine approximation on bounded sets:} For all $z_n \in \D_n$ with $\dist(z_n,0) < S_n$ we have \[
    \dist(z,\D_n) \leq b_n (1 + \vert \epsilon_n \vert^p + \vert \tigma_n \vert^q).
    \]
\end{enumerate}
\end{definition}

\begin{remark} The following statements are equivalent to the \emph{uniform approximation on bounded sets:} 
\begin{itemize}
    \item For all $R>0$ there is a sequence $a_n^R \to 0$ such that for all $z \in \D$ with $\dist(z,0)<R$ we have 
    \[
    \dist(z,D_n) \leq a_n^R (1 + \vert \epsilon \vert^p + \vert \tigma \vert^q). 
    \]
    \item For all $a>0$ and $R>0$, there is an $n(a,R)$ such that for all $z \in \D$ with $\dist(z,0) < R$ and $n >n(a,R)$ we have
    \[
    \dist(z,D_n) \leq a (1 + \vert \epsilon \vert^p + \vert \tigma \vert^q). 
    \]
\end{itemize}
Similar equivalent statements hold for the \emph{fine approximation on bounded sets}.
\end{remark}

\begin{theorem} \label{thm:equiequivalence}
Let $\D_n,\D$ be closed, nonempty subsets of $Y\times Y$. The following statements are equivalent: 
\begin{enumerate} [label=(\roman*)]
    \item \label{thm:equiequivalence1}   $\D_n \limeq \D$ in the $\topoeq$-topology.
    \item \label{thm:equiequivalence2} The functionals $J_n$ converge uniformly to $J$ on $(p,q)$-equi-integrable subsets of $V$. That is, if $X \subset V$ is $(p,q)$-equi-integrable, then 
    \[
    \lim_{n \to \infty} \sup_{v \in X} \vert J_n(v) - J(v) \vert =0.
    \]
\end{enumerate}
\end{theorem}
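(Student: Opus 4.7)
The plan is to mirror the proof of Theorem \ref{thm:equivconvergence}, with equi-integrability used as a substitute for the global uniform/fine approximation assumptions of Definition \ref{def:boundedconv}. Throughout, I would assume without loss of generality that $0\in \D$ (translating otherwise changes the sequences $a_n,b_n$ only by a bounded factor); by applying Definition \ref{def:equiconv}\,(i) to $z=0$, for $n$ large enough one obtains a reference point $\tilde z_n \in \D_n$ with $\dist(\tilde z_n,0)\leq a_n\to 0$.

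For the direction \ref{thm:equiequivalence1}$\Rightarrow$\ref{thm:equiequivalence2}, I would fix $\varepsilon>0$ and a $(p,q)$-equi-integrable set $X\subset V$. By equi-integrability (and the Chebyshev bound $|\{|v|>M\}|\leq CM^{-\min(p,q)}$, uniform in $v\in X$), I choose $M$ so large that $\sup_{v\in X}\int_{\{|v|>M\}}(|v_1|^p+|v_2|^q)\dd x<\varepsilon$. On the ``tail'' $\{|v|>M\}$ both $\dist(v,\D)$ and $\dist(v,\D_n)$ are dominated by $\dist(v,0)$ up to a uniform $o(1)$ term coming from $\tilde z_n$, so the contribution of this set to $|J_n(v)-J(v)|$ is of order $\varepsilon$. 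On the sublevel set $\{|v|\leq M\}$, letting $w_n(x)\in\D_n$ be a nearest point to $v(x)$, the chain $d(v(x),w_n(x))\leq d(v(x),\tilde z_n)\leq d(v(x),0)+d(0,\tilde z_n)$ and the triangle inequality yield $|w_n(x)|\leq C(M)$, independently of $n$ for $n$ large. Taking $n$ large enough that $R_n,S_n>C(M)$, fine approximation on bounded sets gives $\dist(w_n(x),\D)\leq b_n(1+C(M))$, and the Young-type inequality $(a+b)^{\max(p,q)}\leq (1+\varepsilon)a^{\max(p,q)}+C(\varepsilon)b^{\max(p,q)}$ (together with the symmetric estimate using uniform approximation) yields
\[
  |\dist(v(x),\D)-\dist(v(x),\D_n)| \leq \varepsilon\bigl(\dist(v(x),\D)+\dist(v(x),\D_n)\bigr) + C(\varepsilon)(a_n+b_n)\bigl(1+C(M)\bigr).
\]
Integrating, sending $n\to\infty$ and then $\varepsilon\to 0$ gives $\sup_{v\in X}|J_n(v)-J(v)|\to 0$.

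For the direction \ref{thm:equiequivalence2}$\Rightarrow$\ref{thm:equiequivalence1} I would argue by contraposition. If convergence in the $\topoeq$-topology fails, then either uniform or fine approximation on bounded sets is violated: there exist $R,a>0$, a subsequence $n_k\to\infty$, and points $z_{n_k}$ with $|z_{n_k}|<R$ such that either $z_{n_k}\in\D$ and $\dist(z_{n_k},\D_{n_k})>a$, or $z_{n_k}\in\D_{n_k}$ and $\dist(z_{n_k},\D)>a$. Unlike the proof of Theorem \ref{thm:equivconvergence}, where unbounded $z_{n_k}$ had to be localised on small sets $\Sigma_{n_k}$, here the bound $|z_{n_k}|<R$ means the family of constant functions $v_{n_k}(x)\equiv z_{n_k}$ is uniformly bounded in $L_\infty$ and hence $(p,q)$-equi-integrable. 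A direct evaluation gives $|J_{n_k}(v_{n_k})-J(v_{n_k})|\geq |\Omega|\,a$, contradicting the assumed uniform convergence on equi-integrable sets.

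The main obstacle is the sublevel-set analysis in the forward direction, specifically the uniform bound $|w_n(x)|\leq C(M)$ for the nearest point in $\D_n$; without it the fine approximation property (which is only available on bounded sets) cannot be invoked. This is circumvented by systematically comparing $w_n(x)$ to the reference point $\tilde z_n$ close to $0$. The remaining estimates are routine modifications of those in the proof of Theorem \ref{thm:equivconvergence}.
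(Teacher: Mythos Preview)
Your proposal is correct and follows essentially the same approach as the paper: the splitting into $\{|v|\leq M\}$ and $\{|v|>M\}$, the use of equi-integrability to control the tail, the nearest-point argument combined with the Young-type inequality on the sublevel set, and the contrapositive construction for the reverse implication all match. Your converse direction is in fact slightly cleaner than the paper's, which reuses the piecewise $\Sigma_{n_k}$ construction from Theorem~\ref{thm:equivconvergence} before observing $L_\infty$-boundedness, whereas you go directly to constant functions.
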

\begin{proof}
\textbf{`\ref{thm:equiequivalence1} $\Rightarrow$ \ref{thm:equiequivalence2}':} The proof is similar to the proof of Theorem \ref{thm:equivconvergence}. We only prove that fine and uniform approximation imply that, for a $(p,q)$-equi-integrable subset $X \subset V$, we have
\begin{equation} \label{proof:equiequi:Claim}
\liminf_{n \to \infty} \inf_{v \in X} J_n(u) - J(u) \geq 0.
\end{equation}
The converse inequality follows similarly. 
For simplicity assume that $0 \in \D$ and that $p \geq q$. For some fixed $R>0$ we estimate 
\begin{equation} \label{eq:I_n_minus_I}
\begin{split}
    I_n(v) - I(v) &= \int_{\Omega} \dist(v,\D_n) - \dist(v,\D) \dd x \\
   & = \int_{\{ \dist(v,0)  \leq R\}} \dist(v,\D_n) - \dist(v,\D) \dd x + \int_{\{ \dist(v,0)  > R\}} \dist(v,\D_n) - \dist(v,\D) \dd x \\
   & \geq  \int_{\{ \dist(v,0)  \leq R\}} \dist(v,\D_n) - \dist(v,\D) \dd x -  C \int_{\{ \dist(v,0)  > R\}} (1 + \vert \epsilon \vert^p + \vert \tigma \vert^q ) \dd x.
\end{split}
\end{equation}
We now estimate both integrals on the right-hand side from below and start with the second term.
The set $X \subset V$ is $(p,q)$-equi-integrable. Hence, there is an increasing function $\omega \colon \R_+ \to \R_+$ such that 
\[
    \int_{E} (1 + \vert \varepsilon \vert^p + \vert \tigma \vert^q ) \dd x 
    \leq 
    \omega(\vert E \vert).
\]
The set $X$ is bounded. Thus, defining
\[
    M:= \sup_{v \in X} \int_{\Omega} 1+ \vert \varepsilon \vert^p + \vert \tigma \vert^q \dd x, 
\]
we find that the measure of $\{ \dist(v,0)  > R\}$ is bounded by $MR^{-1}$. Consequently, we obtain \begin{equation} \label{proof:equiequi1}
    - C \int_{\{ \dist(v,0)  > R\}} 1 + \vert \epsilon \vert^p + \vert \tigma \vert^q  \dd x \geq -C \omega(MR^{-1}).
\end{equation}
We turn to the first term in \eqref{eq:I_n_minus_I}. If $\dist(v(x),0) \leq R$, we may find some $w(x)\in \D$ with $\dist(w(x),0) \leq (2^p+2^q)R$, and \[
\dist(v(x),\D) = \dist(v(x),w(x)).
\]
Due to uniform approximation for all $w(x)$, we can estimate for $n$ large enough
\begin{align*}
 \int_{\{ \dist(v,0)  \leq R\}} \dist(v,\D_n) - \dist(v,\D) \dd x &= \int_{\{ \dist(v,0)  \leq R\}} d(v,\D_n)^p - d(v,\D)^p \dd x
\\&= \int_{\{ \dist(v,0)  \leq R\}}  d(v,\D_n)^p - d(v,w)^p \dd x \\
&\geq \int_{\{ \dist(v,0)  \leq R\}}  d(v,\D_n)^p - \bigl(d(v,\D_n) + d(w,\D_n)\bigr)^p \dd x \\
&\geq \int_{\{ \dist(v,0)  \leq R\}} - \varepsilon d(v,\D_n)^p - C_{\varepsilon} d(w,\D_n) \dd x \\
&\geq - \varepsilon M  - C_{\varepsilon} a_n M.
\end{align*}
Together with \eqref{proof:equiequi1} this implies \[
 J_n(v) - J(v) \geq -C \omega(M/R)  - \varepsilon M  - C_{\varepsilon} a_n M.
\]
Choosing $R(\varepsilon)$ and $n$ large enough, then for any $\varepsilon$ there is $n_{\varepsilon}$, such  that \[
J_n(v) - J(v) \geq - 2M \varepsilon, 
\quad  
v \in X,\ n\geq n_{\varepsilon},
\]
which establishes \eqref{proof:equiequi:Claim}.

\noindent\textbf{`\ref{thm:equiequivalence2} $\Rightarrow$ \ref{thm:equiequivalence1}':} This implication is a consequence of the same counterexamples as in Theorem \ref{thm:equivconvergence}. Indeed, suppose that the sets $\D_n$ do not uniformly approximate $\D$ on bounded sets. Then there exist $R>0$, $a>0$ and a sequence $z_{n_k} \subset \D$, such that $\dist(z_n,0) \leq R$ and
\[
\dist(z_{n_k},\D_{n_k}) \geq a (1 + \vert \epsilon_{n_k} \vert^p + \vert \tigma_{n_k} \vert^q). 
\]
By the same construction as in the proof of Theorem \ref{thm:equivconvergence}, that is
\[
    v_{n_k} := 
    \begin{cases} 0, & x \notin \Sigma_{n_k} \\ z_{n_k}, & x \in \Sigma_{n_k}, 
    \end{cases} 
\]
we obtain a sequence, such that $J(v_{n_k}) =0$ and $J_n(v_{n_k})\geq a \vert \Omega \vert$ with $v_{n_k}$ uniformly bounded in $L_{\infty}(\Omega;Y\times Y)$ and hence $v_{n_k}$ is also $(p,q)$-equi-integrable. For fine approximation the argument is again very similar.
\end{proof}

%%%%%%%%%%%%%%%%%%%%%%%%%%%%%%%%%%%%%%%%%%%%%%%%%%%%%%%%%%%%%%%%%%%%%%
%%%%%%%%%%%%%%%%%%%%%%%%%%%%%%%%%%%%%%%%%%%%%%%%%%%%%%%%%%%%%%%%%%
%%%%%%%%%%%%%%%%%%%%%%%%%%%%%%%%%%%%%%%%%%%%%%%%%%%%%%%%%%%%%%%%%%
%%%%%%%%%%%%%%%%%%%%%%%%%%%%%%%%%%%%%%%%%%%%%%%%%%%%%%%%%%%%%%%%%%%%%%
%%%%%%%%%%%%%%%%%%%%%%%%%%%%%%%%%%%%%%%%%%%%%%%%%%%%%%%%%%%%%%%%%%
%%%%%%%%%%%%%%%%%%%%%%%%%%%%%%%%%%%%%%%%%%%%%%%%%%%%%%%%%%%%%%%%%%

\section{The data-driven problem in fluid mechanics} \label{sec:diffconst}
In this section we apply the theory developed in the previous sections to the setting of fluid mechanics. We thus specialise to an explicit set of constraints $\Ccal$ consisting of differential constraints and boundary conditions. In Subsection \ref{subsec:inertialess} we consider the case of inertialess fluids, leading to a set of linear differential constraints. In Subsection \ref{subsec:semilin} we consider nonlinear differential constraints. In both cases we work with the following boundary conditions defined on three mutually disjoint  and relatively open parts of the boundary  $\Gamma_D,\Gamma_R,\Gamma_N\subset \partial \Omega$ that satisfy
\begin{displaymath}
    \overline{\Gamma_D\cup \Gamma_R\cup \Gamma_N}= \partial \Omega 
    \quad \text{and}\quad \mathcal{H}^{d-1}(\bar{\Gamma}_D \setminus \Gamma_D) =  \mathcal{H}^{d-1}(\bar{\Gamma}_R \setminus \Gamma_R) = \mathcal{H}^{d-1}(\bar{\Gamma}_N \setminus \Gamma_N) =0
\end{displaymath}
and have $C^1$-boundary as subsets of the manifold $\partial\Omega$.
We consider $(\epsilon,\tigma) \in L_p(\Omega;Y) \times L_q(\Omega;Y )$ with an associated velocity field $u:\Omega\to \R^d$, where $\epsilon=\tfrac 12 \bra{\nabla u+\nabla u^T}$ and a pressure field $\pi:\Omega\to \R$, such that $u$ and $\sigma=-\pi \id+\tigma$ satisfy the following boundary conditions.
\begin{description}
     \item[\namedlabel{Dirichlet}{(D)}]\textbf{No-slip/Dirichlet boundary conditions}: 
     \begin{align*}
         u = g\quad \text{on } \Gamma_D
         \quad
         \text{for } g\in W^{1-1/p}_p(\Gamma_D;\R^d).
     \end{align*}
     \item[\namedlabel{Navier}{(R)}] \textbf{Navier-slip/Robin boundary conditions}: 
     \begin{align*}
         \begin{cases}
             u\cdot \nu=g_\nu\\
             P_{T\partial\Omega}\bra{(\tigma + \pi \id)\nu+\lambda u}= h_\tau
        \end{cases}\quad \text{on } \Gamma_R
     \end{align*}
     for $g_{\nu} \in W^{1-1/p}_p(\Gamma_R)$ and $h_\tau\in  W^{-1/q}_q(\Gamma_R;\R^d)$. Here, $\lambda\ge 0$ is the inverse slip-length and $P_{T\partial\Omega}$ is the orthogonal projection to the tangent space. Note that the second equation can equivalently be cast as
     \begin{align}\label{eq:Robin}
        P_{T\partial\Omega}\bra{\tigma \nu+\lambda u}= h_\tau \quad \text{on } \Gamma_R.
    \end{align} 
     \item[\namedlabel{Neumann}{(N)}]  \textbf{Neumann boundary conditions}: 
     \begin{align*}
         (\tigma + \pi \id)\nu=h \quad\text{on } \Gamma_N
         \quad \text{for } h\in W^{-1/q}_q(\Gamma_N;\R^d).
     \end{align*}
\end{description}

\begin{remark}
\begin{enumerate}[label=(\roman*)]
\item The boundary conditions for $u$ can be understood as conditions for $\epsilon$ in a suitable weak formulation. For instance, if $\Gamma_D = \partial \Omega$, then \ref{Dirichlet} is equivalent to the following condition on $\epsilon$. For any $\varphi \in W^1_q(\Omega;Y)$ with $\diverg \varphi=0$ we have 
\[
    \int_\Omega \epsilon \cdot \varphi \dd x
    = 
    \int_{\partial\Omega} g (\varphi \cdot \nu) \dd \mathcal{H}^{d-1}.
\]
However, since an $\epsilon$ that is contained in the constraint set $\Ccal$ automatically admits a corresponding $u$ (see \eqref{def:Ccal1} below and following explanation), we write the conditions directly for $u$. A similar remark applies to the appearance of $\pi$.

\item The Navier-slip boundary condition \ref{Navier} requires $P_{T\partial\Omega}u\in W_q^{-1/q}(\Gamma_R;\R^d)$ since the other two terms in \eqref{eq:Robin} are contained in this space. Since $\epsilon\in L_p(\Omega;Y)$, and by Lemma \ref{kopo} together with a trace estimate, we have $u\in W^{1-1/p}_p(\Gamma_R;\R^d)$. The space $W^{1-1/p}_p(\Gamma_R)$ embeds into $W^{-1/q}_q(\Gamma_R)$, whenever either $p\geq q$ or 
 \[
     1-\tfrac{1}{p} - \tfrac{d-1}{p} \geq -\tfrac{1}{q} - \tfrac{d-1}{q}.
\]
Thus, since $q= \tfrac{p}{p-1}$, we require 
\begin{align}\label{eq:cond_p_Navier}
    p \geq \tfrac{2d}{d+1}.
\end{align}
We can therefore treat the Navier-slip boundary condition in the physically relevant dimensions $d=2$ and $d=3$ for $p\geq 4/3$ and for $p\geq 3/2$, respectively.

\item The Navier boundary condition \ref{Navier} includes the so called free-slip boundary condition for $\lambda=0$.

\item For simplicity we assume in the following that either $\Gamma_N=\partial \Omega$ or $\Gamma_D\neq\emptyset$. This allows us to control $\norm{u}_{W^1_p}$ in terms of $\norm{\epsilon}_{L_p}$ and the boundary data via the Korn--Poincar{\'e} inequality, cf. Lemma \ref{kopo}. If $\Gamma_R\neq \emptyset$, while $\Gamma_D=\emptyset$, it becomes tedious to specify under which conditions this control can still be obtained. See Lemma \ref{l:Korn} and Remark \ref{r:Korn} below.
\end{enumerate}
\end{remark}

% Before proving coercivity statements in the fluid mechanical setting, we need to establish some control on the norm of the velocity field $u$ associated to $\epsilon$. In order to apply Lemma \ref{kopo} in the fluid mechanical setting we start with the differential constraint
% \begin{equation*}
%     \epsilon =\tfrac{1}{2} (\nabla u +\nabla u^T) 
% \end{equation*}
% together with the boundary conditions \ref{Dirichlet} and \ref{Navier}, which imply \begin{equation} \label{babyboundary}
% \begin{cases}  u = g & \text{on }\Gamma_D \\
%                 u \cdot \nu = g_{\nu} & \text{on } \Gamma_R. \end{cases}
% \end{equation}
In order to obtain a Korn--Poincar\'e type inequality, $u$ has to be uniquely determined by the above boundary conditions 
\begin{equation} \label{babyboundary}
    \begin{cases}  
    u = g, & x \in \Gamma_D 
    \\
    u \cdot \nu = g_{\nu}, & x \in \Gamma_R \end{cases}
\end{equation}
and the constraint 
\[
    \epsilon = \tfrac 12\bra{\nabla u + \nabla u^T},
\]
or the conditions must be invariant under renormalisation by rigid body motions.

\begin{lemma}[Validity of the Korn-Poincar\'e ineqaulity under boundary conditions]\label{l:Korn}
Let $\Omega \subset \R^d$ be open and bounded with $C^1$-boundary and let $\partial\Omega = \bar{\Gamma}_D \cup \bar{\Gamma}_R\cup \bar{\Gamma}_N$ be as specified above. Moreover, suppose that $g\in W^{1-1/p}_p(\partial \Omega;\R^d)$, $g_{\nu}\in W^{1-1/p}_p(\partial \Omega)$ and that for all $A \in \R^{d \times d}_{\skw},~b \in \R^d$ we have
\begin{equation} \label{eq:failure}
    \begin{cases}
        Ax + b =0, & x \in \Gamma_D 
        \\ 
        (Ax+b) \cdot \nu(x) =0, & x \in \Gamma_R
    \end{cases}
    \quad \Longrightarrow\quad A=0,\ b=0.
\end{equation}
Then the following statements hold true:
\begin{enumerate} [label=(\roman*)]
    \item\label{it:korn1} If $u_1$ and $u_2$ satisfy \eqref{babyboundary} and \[
    \nabla u_1 + \nabla u_1^T = \nabla u_2 + \nabla u_2^T,
    \]
     then $u_1=u_2$.
     \item\label{it:korn2} For all $u \in W^{1,p}(\Omega;\R^d)$ obeying \eqref{babyboundary} with $\Gamma_D \neq \emptyset$, the Korn--Poincar\'e inequality 
     \begin{equation} \label{eq:kopo:inhom}
        \Vert u \Vert_{W^{1,p}} \leq C (1+ \Vert \nabla u + \nabla u^T \Vert_{L_p})
     \end{equation}
     holds for a constant $C=C(\Omega,\Gamma_D,\Gamma_R,g,g_{\nu},p)$.
\end{enumerate}
\end{lemma}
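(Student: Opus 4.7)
The plan is to prove \ref{it:korn1} directly from the classification of symmetric-gradient kernels, and to prove \ref{it:korn2} by a contradiction/compactness argument reducing to Lemma \ref{kopo}\ref{abstract:1}, with the nondegeneracy condition \eqref{eq:failure} providing the final contradiction.

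\textbf{Part \ref{it:korn1}.} I would set $w := u_1 - u_2$, so that $\nabla w + \nabla w^T = 0$. It is classical (and follows, e.g., from iterating Korn's identity $\partial_{ij} w_k = \partial_i \epsilon_{jk} + \partial_j \epsilon_{ik} - \partial_k \epsilon_{ij}$) that every such $w$ is an affine rigid motion $w(x) = Ax + b$ with $A \in \R^{d\times d}_{\skw}$ and $b \in \R^d$. Since $u_1$ and $u_2$ both satisfy \eqref{babyboundary} with the same data $g,g_\nu$, the difference $w$ satisfies the homogeneous versions $w=0$ on $\Gamma_D$ and $w\cdot \nu =0$ on $\Gamma_R$. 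The hypothesis \eqref{eq:failure} then forces $A=0$ and $b=0$, i.e. $u_1 = u_2$.

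\textbf{Part \ref{it:korn2}.} I would argue by contradiction. If \eqref{eq:kopo:inhom} fails, there exists $u_n \in W^1_p(\Omega;\R^d)$ satisfying \eqref{babyboundary} with
\begin{equation*}
    \|u_n\|_{W^1_p} \to \infty
    \quad\text{and}\quad
    \|\nabla u_n + \nabla u_n^T\|_{L_p} \leq C.
\end{equation*}
Set $\tilde u_n := u_n/\|u_n\|_{W^1_p}$, so that $\|\tilde u_n\|_{W^1_p} = 1$ and $\|\nabla \tilde u_n + \nabla \tilde u_n^T\|_{L_p} \to 0$. Writing $A_n = \tfrac 12 \fint_\Omega (\nabla \tilde u_n - \nabla \tilde u_n^T)\dd x$ and $b_n = \fint_\Omega \tilde u_n \dd x$, Lemma \ref{kopo}\ref{abstract:1} yields $\|\tilde u_n - (A_nx+b_n)\|_{W^1_p} \to 0$. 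Since $\tilde u_n$ is $W^1_p$-bounded, so is $A_n x + b_n$; as this sequence lives in the finite-dimensional space of rigid motions, a subsequence converges strongly, $A_n \to A$ and $b_n \to b$, to some limit $Ax+b$. Consequently $\tilde u_n \to Ax+b$ strongly in $W^1_p(\Omega;\R^d)$, and in particular the traces converge in $W^{1-1/p}_p(\partial\Omega;\R^d)$.

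\textbf{Passing the boundary conditions to the limit and concluding.} The boundary data of $\tilde u_n$ are $g/\|u_n\|_{W^1_p}$ on $\Gamma_D$ and $g_\nu/\|u_n\|_{W^1_p}$ on $\Gamma_R$, both tending to zero in the respective trace spaces. Since traces are continuous under strong $W^1_p$-convergence, the limit satisfies $Ax + b = 0$ on $\Gamma_D$ and $(Ax+b)\cdot \nu = 0$ on $\Gamma_R$. Invoking the hypothesis \eqref{eq:failure} we obtain $A=0$ and $b=0$, hence $\tilde u_n \to 0$ strongly in $W^1_p$, contradicting $\|\tilde u_n\|_{W^1_p} = 1$. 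The assumption $\Gamma_D\neq\emptyset$ is not used in the structural argument but is kept for compatibility with \eqref{eq:failure} and to avoid degenerate configurations where \eqref{eq:failure} cannot hold (cf. Remark \ref{r:Korn}).

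The main obstacle I expect is verifying that the trace condition $Ax+b=0$ on $\Gamma_D$ passes to the limit cleanly despite $\Gamma_D$ being only relatively open with $C^1$-boundary in $\partial\Omega$; this is handled by standard trace theory once one observes that $\bar\Gamma_D\setminus\Gamma_D$ has zero $\mathcal{H}^{d-1}$-measure, so tangential-normal splittings extend by continuity. All other steps (rigid-motion classification, extraction of the convergent rigid-motion subsequence, and reduction via Lemma \ref{kopo}\ref{abstract:1}) are routine.
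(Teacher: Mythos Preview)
Your proof is correct. Part \ref{it:korn1} is identical to the paper's argument.

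For part \ref{it:korn2} the paper takes a slightly different, more modular route: it observes that the space $X$ of functions satisfying the \emph{homogeneous} version of \eqref{babyboundary} is a closed subspace meeting the rigid motions only in $\{0\}$ (by \eqref{eq:failure}), applies the abstract Lemma~\ref{kopo}\ref{abstract:2} to $X$ to get the homogeneous Korn--Poincar\'e inequality, and then obtains the inhomogeneous estimate \eqref{eq:kopo:inhom} by translation. Your argument instead runs the compactness/contradiction directly on the inhomogeneous problem, using Lemma~\ref{kopo}\ref{abstract:1} rather than \ref{abstract:2}. Since Lemma~\ref{kopo}\ref{abstract:2} is itself proved in the paper by exactly this kind of compactness argument, you have essentially inlined that proof and absorbed the translation step into the normalization $\tilde u_n = u_n/\|u_n\|_{W^1_p}$. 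The paper's factoring isolates a reusable abstract statement; your version is self-contained and handles the inhomogeneous data in one stroke. One small imprecision: the negation of \eqref{eq:kopo:inhom} does not directly give $\|\nabla u_n + \nabla u_n^T\|_{L_p}\le C$, but only the ratio $\|\nabla u_n + \nabla u_n^T\|_{L_p}/\|u_n\|_{W^1_p}\to 0$; since that is all you actually use after normalizing, the argument is unaffected.
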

\begin{proof}
\textbf{\ref{it:korn1}:} The assertion follows from the fact that if $\nabla u_1 + \nabla u_1^T = \nabla u_2 + \nabla u_2^T$, then $u_1-u_2 = Ax +b$ for some $A \in \R^{d \times d}_{\skw}$ and $b \in \R^d$. Condition \eqref{eq:failure} then implies that $A=0$ and $b=0$.

\noindent\textbf{\ref{it:korn2}:} The vector space $X \subset W^1_p(\Omega;\R^d)$ of functions satisfying the homogeneous boundary conditions in \eqref{babyboundary} satisfies, due to \eqref{eq:failure},\[
X \cap \{ A x + b \colon A \in \R^{d \times d}_{\skw}, b \in \R^d\} = \{0\}.
\]
By transposition we get the inhomogeneous version \eqref{eq:kopo:inhom} for the affine space of functions satisfying \eqref{babyboundary}.
\end{proof}
\begin{remark}\label{r:Korn}
Indeed, \eqref{eq:failure} is a rather weak condition on the set $\Omega$. For example, in dimension $d=2$, the weakest boundary condition in the case $\Gamma_D=\emptyset$ would be 
\[
    (Ax + b) \cdot \nu(x) =0 \quad \text{on } \Gamma_R.
\]
Since $\R^{d\times d}_{\skw}$ is one-dimensional, we can explicitly set 
\begin{equation*}
    A=\left( \begin{array}{cc} 0 & 1 \\ -1 & 0 \end{array} \right).
\end{equation*}     
It follows that the only sets not satisfying \eqref{eq:failure} are such that $\Gamma_R$ is a subset of concentric circles. Moreover, if $\Gamma_D\neq \emptyset$, then \eqref{eq:failure} is automatically satisfied.
 
In dimension $d=3$, the situation is similar. Indeed, if $\Gamma_D\neq \emptyset$, then \eqref{eq:failure} is satisfied. If $\Gamma_D=\emptyset$, then, if $\Gamma_R$ is a subset of the boundary of a domain that is rotationally symmetric around a certain axis, \eqref{eq:failure} is not satisfied.
\end{remark}
\begin{remark}
    Uniqueness of $u$ is only important for fluids with inertia. For inertialess fluids, $u$ only appears in the constraints through boundary conditions. Therefore, even if $\epsilon = \tfrac{1}{2}(\nabla u_1 + \nabla u_1^T) = \tfrac{1}{2}(\nabla u_2 + \nabla u_2^T)$ for $u_1 \neq u_2$ enjoying the same boundary conditions, it \emph{does not} matter for the system of equations whether we take $u_1$ or $u_2$. In contrast, for fluids with inertia, the contribution $(u \cdot \nabla) u$ in the differential constraints causes the choice of $u$ to be important.
    Therefore, in the linear setting, even if the prescribed boundary conditions \ref{Dirichlet}, \ref{Navier} and \ref{Neumann} allow to choose different $u \in W^{1}_p(\Omega;\R^d)$, for example if $\Gamma_N=\partial \Omega$, we may project onto a subspace that does not allow multiple solutions to \[
    \epsilon = \tfrac{1}{2} \left(\nabla u + \nabla u^T\right).
    \]
    Consequently, we can apply Lemma \ref{kopo} in this situation.
\end{remark}

\subsection{Inertialess fluids}\label{subsec:inertialess}
In this section we study inertialess fluids leading to the set of \emph{linear} differential constraints from \eqref{intro:linear_C}. That is, we consider
\begin{equation}\tag{linD}\label{def:Ccal1}
    \begin{cases}
        \epsilon = \frac{1}{2}\left(\nabla u + \nabla u^T\right) 
        &
        \\
        \Div u = 0 &
         \\
        -\Div \tigma = f - \nabla \pi, &
    \end{cases}
\end{equation}
where $f\in W^{-1}_q(\Omega;\R^d)$ is given. Combining this with the boundary conditions, the constraint set is given by
\begin{align}\tag{linC}\label{eq:Ccal}
    \Ccal_{\lin}\coloneqq\{(\epsilon,\tigma)\in V\colon \eqref{def:Ccal1},\ref{Dirichlet},\ref{Navier},\text{ and }\ref{Neumann}\text{ are satisfied}\}.
\end{align}
Note that the statement `$(\epsilon,\tigma)$ satisfies \eqref{def:Ccal1}' means that there are $u\in W^1_p(\Omega;\R^d)$ and $\pi\in L_q(\Omega)$ such that \eqref{def:Ccal1} is satisfied. For data sets $\D_n, \D \subset Y \times Y$ we consider the functionals $I_n$ and $I$ as in \eqref{intro:I}. 
%consider the functional \[
%I_n(v) = \begin{cases}       \int_{\Omega} \dist((\epsilon(x),\tigma(x)), \D_n) \dd x & %\text{if } v \in \Ccal \\ \infty & \text{else.}
%\end{cases}
%\]
%\rchange{and an analogous functional $I$ corresponding to $\D$.}

\subsubsection{Coercivity} 
In this subsection we verify coercivity of the functionals $I_n$ and $I$.

\begin{definition}
We call a function $\funct \colon Y \times Y \to \R$ $\mathbf{(p,q)}$\textbf{-coercive}, if there exist $C_1,C_2>0$ and $\gamma \in \R$ such that 
\begin{align}\label{eq:pq_coercive}
    \funct(\epsilon,\tigma) \geq C_1(\vert \epsilon \vert^p+\vert \tigma \vert^q) - C_2 - \gamma \epsilon\cdot\tigma.
\end{align}
We say that $\funct$ has $\mathbf{(p,q)}$\textbf{-growth}, if there is $C_0>0$ such that 
\[
\funct(\epsilon,\tigma) \leq C_0(1  + \vert \epsilon \vert^p + \vert \tigma \vert^q).
\]
\end{definition}
For $v \in V$ we define the functional
\begin{align}\label{eq:I_f}
    I(v) := 
    \begin{cases} 
        \int_{\Omega} \funct(v) \dd x, & v \in \Ccal_{\lin} \\ 
        \infty, & \text{else,}
    \end{cases}
\end{align}
in analogy to \eqref{intro:I}.
\begin{remark}
    In Section \ref{sec:dataconv} we examine data convergence without the differential constraints, in particular we study the unconstrained functional $J$. In general, we do not expect a coercivity statement of the type 
    \[
        \Vert v \Vert_{V} \to \infty \quad \Longrightarrow \quad J(v) \to \infty.
    \]
    In the following we prove that coercivity follows in the presence of the differential constraints together with suitable boundary conditions, i.e. it holds that
    \[
        \Vert v \Vert_{V} \to \infty,~v \in \Ccal_{\lin} \quad \Longrightarrow \quad I(v)=J(v) \to \infty.
    \]
    We can include the term $\epsilon\cdot \tigma$ on the right-hand side of \eqref{eq:pq_coercive} because it is a Null-Lagrangian. This becomes clear in Remark \ref{r:null_lag} and in the proof of Lemma \ref{l:coercivity_omega} below. In some sense we only require coercivity away from the collinearity set $\{(\epsilon,\tigma): \epsilon =\beta \tigma, \beta\in \R\}$. Because we expect $\epsilon$ and $\tigma$ to be colinear for classical fluids, this kind of transversal coercivity is a natural condition for the distance to the data sets which takes the role of $\funct$ later on.
\end{remark}
\begin{remark} \label{r:null_lag}
For the purpose of exposition, we prove a coercivity result for functions on the torus. Here, \emph{averages} of the functions $(\epsilon,\tigma)$ take over the role of \emph{boundary values} and the role of the differential constraints can be isolated more clearly.

Let $\funct$ be $(p,q)$-coercive. We claim that there are constants $C_1,C_2>0$, such that for any $(\epsilon_0,\tigma_0) \in Y \times Y$ and  all $(\epsilon,\tigma) \in L_p(\T_d;Y) \times L_q(\T_d;Y)$ satisfying  
\begin{equation} \label{lintorus} 
\begin{cases}
    \int_{\T_d} (\epsilon,\tigma) \dd x = 0& \\
    \epsilon=\tfrac{1}{2} \left(\nabla u +\nabla u^T\right)& \\
    \diverg \tigma = \nabla \pi, 
\end{cases}
\end{equation}
 for some  $\pi \in L_q(\T_d)$, we have the following coercivity: 
\begin{equation} \label{coerc:torus}
    \int \funct(\epsilon_0 +\epsilon,\tigma_0+\tigma) \dd x \geq c_1 \int_{\T_d} \vert \epsilon \vert^p +\vert \tigma \vert^q \dd x - c_2 (1+\vert \epsilon_0 \vert^p + \vert \tigma_0 \vert^q).
\end{equation}
% We may assume that $\pi$ appearing implicitly in \eqref{lintorus} has average $0$ and thus satisfies
% \[
%     \Vert \pi \Vert_{L_q} \leq C\Vert \nabla \pi \Vert_{W^{-1}_q}= C \Vert \diverg \tigma \Vert_{W^{-1}_q} \leq C \Vert \tigma \Vert_{L_q}.
% \]
We compute
\begin{align*}
    \int_{\T_d} (\epsilon_0+\epsilon) &\cdot (\tigma_0+\tigma) \dd x= \int_{\T_d} \epsilon \cdot \left((\tigma_0+\tigma) + (\pi_0 + \pi) \id\right) \dd x  + \varepsilon_0 \cdot \int_{\T_d}\left((\tigma_0+\tigma) + (\pi_0 + \pi) \id\right) \dd x \\
    &= \int_{\T_d} \frac 12\bra{\nabla u + \nabla u^T} \left((\tigma_0+\tigma) + (\pi_0 + \pi) \id\right) \dd x + \varepsilon_0 \cdot \int_{\T_d}\left(\tigma_0 + \pi_0\id\right) \dd x  \\
    &= \int_{\T_d} \nabla u\left((\tigma_0+\tigma) + (\pi_0 + \pi) \id\right) \dd x +  \varepsilon_0 \cdot \tigma_0  \\
    &= -\int_{\T_d} u \cdot \diverg(\tigma+\pi \id) \dd x + \varepsilon_0 \cdot \tigma_0
    = \varepsilon_0 \cdot \tigma_0.
\end{align*}
Therefore, \[
\left \vert \int_{\T_d} (\epsilon_0+\epsilon) \cdot (\tigma_0+\tigma) \dd x \right \vert \leq \vert \epsilon_0 \vert^p + \vert \tigma_0 \vert^q.
\]
% Furthermore, as $\pi$ has average $0$ \[
% \Vert \pi \Vert_{L_q} \leq \Vert \nabla \pi \Vert_{W^{-1,q}} = \Vert \diverg \tigma \Vert_{W^{-1,q}}  \leq C \Vert \tigma \Vert_{L_q}.
% \]
We conclude that \begin{align*}
    \int \funct(\epsilon_0 +\epsilon,\tigma_0+\tigma)& \geq C_1 \int_{\T_d} \vert \varepsilon_0 + \varepsilon \vert^p + \vert \tigma_0 + \tigma |^q \dd x -C_2 - \gamma \int_{\T_d} \epsilon \cdot \tigma \dd x \\
    &\geq C_1\int_{\T_d} \vert \epsilon \vert^p + \vert \tigma \vert^q \dd x - C^\prime_2(1+ \vert \epsilon_0 \vert^p + \vert \tigma_0 \vert^q).
\end{align*}
\end{remark}

Using the boundary conditions instead of averages, we obtain coercivity of the functional also on bounded domains, as long as the integrand is $(p,q)$-coercive. 

\begin{lemma}[Coercivity in $\Omega$ with boundary values]\label{l:coercivity_omega}
Suppose that $f,g,g_\nu,h_\tau$, and $h$ are given as in  \eqref{def:Ccal1}, \ref{Dirichlet}, \ref{Navier}, and \ref{Neumann}. We assume that either $\Gamma_N=\partial \Omega$ or $\Gamma_D\neq \emptyset$. If $\Gamma_R\neq \emptyset$, then we additionally assume $p\ge 2d/(d+1)$. Suppose that $\funct \colon Y \times Y \to \R$ is $(p,q)$-coercive and has $(p,q)$-growth. Then there are $C_3,C_4>0$,such that for $I$ from \eqref{eq:I_f} and for all $v=(\epsilon,\tigma) \in V$
\[
    I(v) \geq C_3 \int_{\Omega}(\vert \epsilon \vert^p + \vert \tigma \vert^q) \dd x  -C_4.
\]
\end{lemma}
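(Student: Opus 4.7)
The plan is to use $(p,q)$-coercivity of $\funct$ to reduce the claim to controlling the cross term $\gamma\int_\Omega\epsilon\cdot\tigma\dd x$. Concretely, it suffices to prove that for every $\delta>0$ there is a constant $C_\delta$ (depending on the data $f,g,g_\nu,h_\tau,h$, but not on $v$) such that
\begin{equation*}
    \left|\int_\Omega \epsilon\cdot\tigma\dd x\right|\leq \delta\bigl(\|\epsilon\|_{L_p}^p+\|\tigma\|_{L_q}^q\bigr)+C_\delta
    \quad\text{for all } (\epsilon,\tigma)\in \Ccal_{\lin};
\end{equation*}
choosing $\delta$ small enough then absorbs the cross term into the coercive $L_p$- and $L_q$-parts. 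If $\Ccal_{\lin}=\emptyset$ there is nothing to prove, so I may assume it is non-empty, in which case the boundary data are automatically compatible with the constraint $\diverg u=0$.

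The key step is an integration-by-parts argument against a fixed lift of the boundary data. I would construct $v_0\in W^1_p(\Omega;\R^d)$ with $v_0=g$ on $\Gamma_D$, $v_0\cdot\nu=g_\nu$ on $\Gamma_R$, and $\diverg v_0=0$ in $\Omega$, using a Bogovski-type extension (the relevant compatibility condition is automatic when $\Gamma_N\neq\emptyset$, and follows from $\Ccal_{\lin}\neq\emptyset$ otherwise). Writing $u=v_0+w$, the corrector $w$ satisfies $\diverg w=0$, $w=0$ on $\Gamma_D$, and $w\cdot\nu=0$ on $\Gamma_R$. Splitting
\begin{equation*}
    \int_\Omega \epsilon(u)\cdot\tigma\dd x
    =
    \int_\Omega \epsilon(v_0)\cdot\tigma\dd x+\int_\Omega \epsilon(w)\cdot\tigma\dd x,
\end{equation*}
the first integral is at once dominated by $\|\epsilon(v_0)\|_{L_p}\|\tigma\|_{L_q}$ and absorbed via Young's inequality.

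For the second integral, using that $\tigma$ is symmetric, integration by parts combined with $-\diverg\tigma+\nabla\pi=f$ and $\diverg w=0$ yields
\begin{equation*}
    \int_\Omega \epsilon(w)\cdot\tigma\dd x
    =\int_\Omega w\cdot f\dd x+\int_{\partial\Omega} w\cdot\sigma\nu\dd\Hcal^{d-1},
\end{equation*}
with $\sigma=-\pi\id+\tigma$. The boundary integrand vanishes on $\Gamma_D$ since $w=0$; on $\Gamma_R$ it collapses through $w\cdot\nu=0$ and the tangential Navier condition to $\int_{\Gamma_R}P_{T\partial\Omega}w\cdot(h_\tau-\lambda P_{T\partial\Omega}u)\dd\Hcal^{d-1}$; and on $\Gamma_N$ it becomes $\int_{\Gamma_N}w\cdot h\dd\Hcal^{d-1}$ by \ref{Neumann}. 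Splitting $u=v_0+w$ in the Navier piece, the diagonal term $-\lambda\int_{\Gamma_R}|P_{T\partial\Omega}w|^2\dd\Hcal^{d-1}$ has the favourable sign and can be discarded; every remaining contribution is bounded, via duality and the trace theorem, by the data times $\|w\|_{W^1_p}$.

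Finally, the Korn--Poincar\'e inequality (Lemma \ref{l:Korn}\ref{it:korn2} when $\Gamma_D\neq\emptyset$, or its rigid-motion-quotient version when $\Gamma_N=\partial\Omega$) gives $\|w\|_{W^1_p}\leq C(1+\|\epsilon\|_{L_p})$, and a last application of Young's inequality converts every bound of the form $C(1+\|\epsilon\|_{L_p})$ into $\delta\|\epsilon\|_{L_p}^p+C_\delta$. Summing and choosing $\delta$ small enough completes the proof. The main obstacle will be the Navier-slip cross term $\lambda\int_{\Gamma_R}P_{T\partial\Omega}w\cdot P_{T\partial\Omega}v_0\dd\Hcal^{d-1}$: it is exactly the hypothesis $p\geq 2d/(d+1)$ that makes the pairing of $W^{1-1/p}_p$- and $W^{-1/q}_q$-traces on $\Gamma_R$ well defined, as noted after \eqref{eq:cond_p_Navier}. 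A secondary, more bookkeeping issue is the existence of the divergence-free lift $v_0$, which rests on Bogovski-type constructions under the compatibility condition $\int_{\Gamma_D}g\cdot\nu+\int_{\Gamma_R}g_\nu=0$ (automatic once $\Ccal_{\lin}\neq\emptyset$ and $\Gamma_N=\emptyset$).
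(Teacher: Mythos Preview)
Your proof is correct and follows essentially the same strategy as the paper: integrate $\int_\Omega\epsilon\cdot\tigma\dd x$ by parts, use the boundary conditions to control the resulting boundary terms, exploit the favourable sign of the Navier friction term, and conclude via Korn--Poincar\'e and Young. The one genuine difference is that you introduce a divergence-free lift $v_0$ of the boundary data and work with $w=u-v_0$, whereas the paper works directly with $u$. The paper's direct route forces it to estimate the normal traces $\|(\tigma-\pi\id)\nu\|_{W^{-1/q}_q(\partial\Omega)}$ on $\Gamma_D$ and on the normal part of $\Gamma_R$ (via the identity $-\diverg\tigma+\nabla\pi=f$, see \eqref{eq:tigma_pi_est}), which injects a $\|\tigma\|_{L_q}$ into the boundary estimates; your route avoids these trace estimates entirely because $w$ vanishes on $\Gamma_D$ and $w\cdot\nu$ vanishes on $\Gamma_R$, at the cost of the extra term $\int_\Omega\epsilon(v_0)\cdot\tigma\dd x$, which equally produces a $\|\tigma\|_{L_q}$ contribution. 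So the two approaches are close variants, with yours trading the pressure/stress trace estimate for a Bogovski-type lift.

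One small inconsistency in your write-up: you state the goal as an absolute-value bound $|\int_\Omega\epsilon\cdot\tigma|\le\delta(\ldots)+C_\delta$, but then discard the diagonal Navier term $-\lambda\int_{\Gamma_R}|P_{T\partial\Omega}w|^2$ by sign. The sign argument gives only an \emph{upper} bound on $\int_\Omega\epsilon\cdot\tigma$; this is exactly what is needed in the physically relevant case $\gamma\ge0$ (and is what the paper does too), but it does not give the two-sided estimate you announced. This is purely cosmetic and does not affect the validity of the argument.
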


\begin{proof}
 We may assume that $v \in \Ccal_{\lin}$, otherwise there is nothing to show. By the coercivity of $\funct$ we have
\begin{align}\label{eq:omeg_coerc}
    I(v) 
    = \int_{\Omega} \funct(\epsilon,\tigma) \dd x 
    \geq 
    \int_{\Omega} C_1( \vert \epsilon \vert^p+\vert \tigma \vert^q) -C_2-\gamma \epsilon\cdot \tigma \dd x.
\end{align}
Since $v\in \Ccal_{\lin}$,
\[
    \epsilon = \tfrac{1}{2}\left(\nabla u + \nabla u^T\right),
\]
for some $u$ with
\[
\Vert u \Vert_{W^1_p} \leq C\bra{1+\Vert \epsilon \Vert_{L_p}} ,
\]
due to the Korn-Poincar\'e inequality from Lemma~ \ref{l:Korn}\ref{it:korn2} . Furthermore we have the following estimate
\begin{align}\label{eq:tigma_pi_est}
    \norm{\tigma \nu}_{W^{-1/q}_q(\partial\Omega)}+\norm{\pi \nu}_{W^{-1/q}_q(\partial\Omega)}&\le C\bra{\norm{\tigma}_{L_q}+\norm{f}_{W^{-1}_q}},
\end{align}
which is due to $- \Div \tigma+\nabla \pi=f$. Let us now estimate the last term in \eqref{eq:omeg_coerc}. The following computations will be done under the assumption that all functions are smooth. The statement follows by density. Observe that 
\begin{align}
    \int_{\Omega} \epsilon \cdot \tigma\dd x &= \int_{\Omega} \tfrac 12 \bra{\nabla u + \nabla u^T} \cdot (\tigma-\pi\id)\dd x = \int_{\Omega} \nabla u \cdot (\tigma-\pi\id) \dd x\nonumber\\
    &= -\int_{\Omega} u \cdot (\diverg \tigma-\nabla\pi) \dd x+ \int_{\partial \Omega} u\cdot (\tigma-\pi\id) \nu \dd \Haus^{d-1} \nonumber\\
    &=  \int_{\Omega} u\cdot f \dd x + \int_{\partial \Omega} u\cdot (\tigma-\pi\id)\nu \dd \Haus^{d-1}. \label{eq:eps_tig}
\end{align}
On the one hand, we have the following estimate for the bulk term 
\begin{align}\label{eq:f_est_lin}
    \left \vert \int_{\Omega} u\cdot f\dd x \right \vert \leq  \Vert  u \Vert_{L_p} \Vert f \Vert_{L_q} \leq  C\bra{1+\Vert \epsilon \Vert_{L_p}}\Vert f \Vert_{L_q}.
\end{align}
On the other hand, the boundary contribution can be estimated on the Dirichlet part by
\begin{align}
     \left \vert\int_{\Gamma_D} u\cdot(\tigma - \pi \id) \nu\dd \Haus^{d-1}\right\vert
     &= \left\vert\int_{\Gamma_D} g\cdot (\tigma - \pi \id) \nu\dd \Haus^{d-1}\right\vert\nonumber\\
     &\le  \Vert g \Vert_{W^{1-1/p}_p(\Gamma_D)} \bra{\norm{(\tigma - \pi \id) \nu}_{W^{-1/q}_q(\Gamma_D)}}\nonumber\\
     & \le \Vert g \Vert_{W^{1-1/p}_p(\Gamma_D)} \bra{\norm{\tigma - \pi \id \nu}_{W^{-1/q}_q(\Gamma_D)}} \nonumber\\
    &\leq C\bra{ \Vert \epsilon \Vert_{L_p}+\norm{\tigma}_{L_q}+\norm{f}_{W^{-1}_q}},\label{eq:dir}
\end{align}
on the Navier part by first isolating the term with sign
\begin{align}
\int_{\Gamma_R} u\cdot(\tigma - \pi \id) \nu\dd \Haus^{d-1}
     &= \int_{\Gamma_R} g_\nu \nu\cdot (\tigma - \pi \id) \nu-\lambda \abs{P_{T_x\partial\Omega}u}^2+P_{T_x\partial\Omega}u\cdot h_\tau\dd \Haus^{d-1}, \label{eq:rob}
\end{align}
and then estimating
\begin{align}
&\left \vert\int_{\Gamma_R} g_\nu \nu\cdot (\tigma - \pi \id) \nu+P_{T_x\partial\Omega}u\cdot h_\tau\dd \Haus^{d-1}\right\vert\nonumber\\
     &\quad\le  \Vert g_\nu \Vert_{W^{1-1/p}_p(\Gamma_R)} \Vert (\tigma - \pi \id) \nu \Vert_{W^{-1/q}_q(\Gamma_R)}+\norm{u}_{W^{1-1/p}_p(\Gamma_R)}\Vert h_\tau \Vert_{W^{-1/q}_q(\Gamma_R)} \nonumber\\
    &\quad\leq C\bra{1+\Vert \epsilon \Vert_{L_p}+\norm{\tigma}_{L_q}+\norm{f}_{W^{-1}_q}}\label{eq:rob2},
\end{align}
and on the Neumann part by
\begin{align}
\left \vert\int_{\Gamma_N} u\cdot(\tigma - \pi \id) \nu\dd \Haus^{d-1}\right\vert
    = \left\vert\int_{\Gamma_N}u\cdot h\dd \Haus^{d-1}\right\vert
    \le  \Vert u \Vert_{W^{1-1/p}_p(\Gamma_N)} \Vert h \Vert_{W^{-1/q}_q(\Gamma_N)} 
    \leq C_h \Vert \epsilon \Vert_{L_p}. \label{eq:neu}
\end{align}
Inserting \eqref{eq:rob} into \eqref{eq:eps_tig} and using the result together with \eqref{eq:f_est_lin}, \eqref{eq:dir}, \eqref{eq:rob2}, and \eqref{eq:neu} in \eqref{eq:omeg_coerc} yields
\begin{align}
    I(v) &\geq C_1 \left(\Vert \epsilon \Vert_{L_p}^p+\Vert \tigma \Vert_{L_q}^q  \right)-C_2 -\gamma \int_{\Omega} \epsilon \cdot \tigma \dd x\nonumber \\
    & \geq  C_1 \left(\Vert \epsilon \Vert_{L_p}^p+\Vert \tigma \Vert_{L_q}^q\right)- C\bra{\Vert \epsilon \Vert_{L_p}+\Vert \tigma \Vert_{L_q}+1} \nonumber\\
    &\geq \frac{C_1}{2} \left(\Vert \epsilon \Vert_{L_p}^p+\Vert \tigma \Vert_{L_q}^q \right) -C,\label{eq:coerc1}
\end{align}
where we used Young's inequality in the last step and the constants depend on $d,\Omega,f,g,g_\nu,h,h_\tau$.
\end{proof}

Lastly we check, that indeed the function $\dist(\cdot,\D)$ is $(p,q)$-coercive if $\D$ contains data for which `$\epsilon$ and $\tigma$ are aligned well enough'.

\begin{lemma}
The distance function $\dist(\cdot,\D)$ to a set $\D \subset Y \times Y$ is $(p,q)$-coercive if and only if there are $c_1 \in \R$ and $c_2>0$, such that 
\begin{equation} \label{eq:Dcond}
 \D \subset \{ (\epsilon,\tigma) \in Y\times Y \colon c_1  \epsilon\cdot \tigma +c_2 > \vert \epsilon \vert^p + \vert \tigma \vert^q\}.
\end{equation}
\end{lemma}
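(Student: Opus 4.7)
The $(\Rightarrow)$ implication is immediate: every $(\epsilon_0,\tigma_0)\in\D$ has $\dist((\epsilon_0,\tigma_0),\D)=0$, so inserting this into the assumed $(p,q)$-coercivity yields $|\epsilon_0|^p+|\tigma_0|^q\leq(\gamma/C_1)\,\epsilon_0\cdot\tigma_0+C_2/C_1$. Taking $c_1:=\gamma/C_1$ and $c_2:=C_2/C_1+1$ produces the desired strict inclusion.

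For the converse $(\Leftarrow)$, I fix an arbitrary $z_0=(\epsilon_0,\tigma_0)\in\D$ and an arbitrary $z=(\epsilon,\tigma)\in Y\times Y$, and aim at a pointwise lower bound $\dist(z,z_0)\geq C_1(|\epsilon|^p+|\tigma|^q)-C_2-\gamma\,\epsilon\cdot\tigma$ with constants independent of $z_0$. The starting point is the reverse-triangle inequality $|\epsilon-\epsilon_0|^p\geq(1+\eta)^{1-p}|\epsilon|^p-\eta^{1-p}|\epsilon_0|^p$, valid for any $\eta>0$, and its analogue in $\tigma$ with parameter $\mu$; summing the two with the prefactors $1/p,1/q$ and bounding the emerging negative $z_0$-coefficients by their maximum $K:=\max(\eta^{1-p}/p,\mu^{1-q}/q)$ gives
\begin{equation*}
    \dist(z,z_0)\geq\tfrac{(1+\eta)^{1-p}}{p}|\epsilon|^p+\tfrac{(1+\mu)^{1-q}}{q}|\tigma|^q-K\bigl(|\epsilon_0|^p+|\tigma_0|^q\bigr).
\end{equation*}
The hypothesis turns the last term into $-Kc_1\,\epsilon_0\cdot\tigma_0-Kc_2$.

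To trade $\epsilon_0\cdot\tigma_0$ for $\epsilon\cdot\tigma$ I use the identity $\epsilon_0\cdot\tigma_0=\epsilon\cdot\tigma-(\epsilon-\epsilon_0)\cdot\tigma-\epsilon_0\cdot(\tigma-\tigma_0)$, and bound the two mixed products via Young's inequality with a scaling parameter $\theta>0$. This generates contributions proportional to $|\epsilon-\epsilon_0|^p$, $|\tigma|^q$, $|\epsilon_0|^p$ and $|\tigma-\tigma_0|^q$: the first and fourth are absorbed into the $\dist(z,z_0)$ on the left (at the cost of a bounded multiplicative rescaling), the second only shrinks the $|\tigma|^q$-coefficient on the right if $\theta$ is chosen large enough, and the third is again controlled through the hypothesis. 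Setting $\gamma:=Kc_1$ then produces the coercivity inequality with $C_2$ depending only on $p,q,c_1,c_2$ and the auxiliary parameters.

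The principal obstacle is the coupled bookkeeping of $\eta,\mu,\theta$ and $C_1$: each application of Young or reverse triangle worsens the leading coefficients of $|\epsilon|^p$ and $|\tigma|^q$, so one must verify that after all absorptions a strictly positive $C_1$ survives. This is handled by first fixing $\eta=\mu=1$, so that $K=1/\min(p,q)$, then taking $\theta$ large enough that $Kc_1/\theta^q$ is strictly smaller than the already-present coefficient of $|\tigma|^q$, and finally choosing $C_1>0$ small. The regime $c_1<\min(p,q)$ is in fact trivial, since a single application of Young to the hypothesis forces $\D$ to be bounded, whereupon coercivity with $\gamma=0$ follows from the reverse triangle alone.
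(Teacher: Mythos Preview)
Your forward direction is fine and matches the paper. The reverse direction takes a genuinely different route from the paper, but as written it has a circularity that you do not resolve.

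The issue is your treatment of the leftover term $|\epsilon_0|^p$. After you write $\epsilon_0\cdot\tigma_0=\epsilon\cdot\tigma-(\epsilon-\epsilon_0)\cdot\tigma-\epsilon_0\cdot(\tigma-\tigma_0)$ and apply Young, you obtain a negative contribution $-\tfrac{Kc_1}{p\theta'^p}|\epsilon_0|^p$ that must be controlled. You say this is ``again controlled through the hypothesis'', but the hypothesis gives only $|\epsilon_0|^p<c_1\,\epsilon_0\cdot\tigma_0+c_2$, which reinserts precisely the quantity $\epsilon_0\cdot\tigma_0$ you were trying to eliminate. Your final bookkeeping paragraph does not address this: you discuss the $|\tigma|^q$-coefficient and then set $\gamma=Kc_1$, which is the value one gets if $\epsilon_0\cdot\tigma_0$ is replaced by $\epsilon\cdot\tigma$ exactly once, with no residual term. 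So as stated the argument does not close. There are two honest repairs: either bound $|\epsilon_0|^p$ by $2^{p-1}(|\epsilon|^p+|\epsilon-\epsilon_0|^p)$ instead of through the hypothesis (the second summand is absorbed into $\dist(z,z_0)$, the first only shrinks the $|\epsilon|^p$-coefficient, and one checks it stays positive for $\theta'$ large), or else iterate the hypothesis and verify that the coefficient in front of $\epsilon_0\cdot\tigma_0$ decays geometrically with ratio $c_1/(p\theta'^p)<1$, summing all the side terms as convergent series. Both work, but neither is what you wrote.

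By contrast, the paper avoids all pointwise bookkeeping. It first argues that passing from $c_2>0$ to $c_2=0$ changes $\dist(\cdot,\D)$ only by a bounded amount; then it observes that the maximal set $\D'=\{c_1\,\epsilon\cdot\tigma>|\epsilon|^p+|\tigma|^q\}$ is invariant under the anisotropic scaling $(\epsilon,\tigma)\mapsto(\lambda\epsilon,\lambda^{p/q}\tigma)$, so that $\dist(\cdot,\D')$ is $(p,q)$-homogeneous of degree $p$. Coercivity then reduces to a compactness argument on the sphere $\{|\epsilon|^p+|\tigma|^q=1\}$: the closed subset where $2c_1\,\epsilon\cdot\tigma\le |\epsilon|^p+|\tigma|^q$ is at positive distance $a>0$ from $\D'$, giving the inequality there, while on the complement the right-hand side of the coercivity bound is already negative. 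Your approach, once repaired, produces explicit constants in terms of $p,q,c_1,c_2$; the paper's approach is shorter and makes the scaling structure transparent but yields no explicit constants.
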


\begin{remark}
Condition \eqref{eq:Dcond} means that the data very roughly behaves like a power law for data points with large strain, i.e. $\sigma \sim \beta \vert \varepsilon \vert^{\alpha-1} \varepsilon$ whenever $(\sigma,\epsilon) \in \D$ for $\alpha = p-1$. The factor $\beta$ however might depend on the strain $\epsilon$.
\end{remark}

\begin{proof}
\textbf{`$\Longrightarrow$':}
Suppose first that the distance function to $\D$ is $(p,q)$-coercive, i.e. \[
\dist((\epsilon,\tigma),\D) \geq C_1 (\vert \epsilon \vert^p + \vert \tigma \vert^q) - C_2 - \gamma \epsilon \cdot \tigma.
\]
Then, for all $(\epsilon,\tigma) \in \D$ we have  \[
0 \geq C_1 (\vert \epsilon \vert^p + \vert \tigma \vert^q) - C_2 - \gamma  \epsilon \cdot \tigma
\]
and therefore, \[
(\epsilon,\tigma) \in \D \quad \Longrightarrow  \quad  \vert \epsilon \vert^p + \vert \tigma \vert^q < c_2 + c_1 \epsilon \cdot \tigma.
\]

\noindent\textbf{`$\Longleftarrow$':} For the converse direction we need to prove that the distance function to the set \[
 \D= \{ (\epsilon,\tigma) \in Y \times Y \colon c_1 \epsilon\cdot \tigma +c_2 > \vert \epsilon \vert^p + \vert \tigma \vert^q\}
\]
is $(p,q)$-coercive. The constant $c_2$ only makes $\D$ thicker by a finite amount. To see this, for $(\epsilon,\tigma)\in \D$, write $\tigma=\alpha \epsilon+\tigma^\perp$ with $\epsilon\cdot \tigma^\perp=0$ and define $\tigma_\beta=\alpha \epsilon+\beta\tigma^\perp$. Since $\epsilon\cdot \tigma=\alpha\abs{\epsilon}^2$ we must have $\vert\tigma^\perp\vert^q\le c_2+c_\alpha\abs{\epsilon}$ because of $(\epsilon,\tigma)\in\D$. Then $\abs{\tigma_\beta}^q\le c_{q} \abs{\alpha \epsilon}^q+\beta^q\abs{\tigma^\perp}^q$ while $\epsilon\cdot \tigma=\epsilon\cdot \tigma_\beta$. Decreasing $\beta$, we find a $\tigma_\beta$ such that $c_1 \epsilon \cdot \tigma > \vert \epsilon \vert^p + \vert \tigma \vert^q$and such that $\dist((\epsilon,\tigma),(\epsilon,\tigma_\beta))$ is bounded independently of $(\epsilon,\tigma)$.

Thus, we may assume that $c_2=0$ since this only shifts $C_2$ in \eqref{eq:pq_coercive}. Then $\D$ is $(p,q)$-homogeneous, i.e. $(\epsilon,\tigma) \in \D \Rightarrow (\lambda \epsilon, \lambda^{p/q} \tigma)\in \D$ for all $\lambda>0$. This in turn implies that the distance function is $(p,q)$-homogeneous, i.e. \begin{equation}\label{eq:homdist} 
    \dist\left((\lambda \epsilon, \lambda^{p/q} \tigma),\D\right) = \lambda^p \dist\left((\epsilon,\tigma),\D\right).
\end{equation}
for all $\lambda >0$. Let $S= \{\vert \epsilon \vert^p + \vert \tigma \vert^q=1\}$ be the unit sphere. Then the set 
\[
E\coloneqq S \cap \{2c_1 \epsilon \cdot \tigma  \le \vert \epsilon \vert^p + \vert \tigma \vert^q\}
\]
is compact and has positive distance to $\D$, i.e. there exists $a>0$ such that
\[
    (\epsilon,\tigma) \in E \quad \Longrightarrow \quad \dist((\epsilon,\tigma),\D)  >a.
\]
Hence, setting 
\begin{equation*}
    c=\max_{(\epsilon,\tigma)\in E}(\vert \epsilon \vert^p + \vert \tigma \vert^q-2c_1 \epsilon \cdot \tigma),
\end{equation*}
we have
\[
(\epsilon,\tigma) \in S \quad  \Longrightarrow \quad \dist((\epsilon,\tigma),\D) \geq \frac ac ( \vert \epsilon \vert^p + \vert \tigma \vert^q - 2 c_1 \epsilon \cdot \tigma),
\]
where we use that the right-hand side is smaller than $0$ on in the complement of $E$, while it is smaller than $a$ in $E$.
This and \eqref{eq:homdist} show that the distance function $\dist$ is $(p,q)$-coercive.

\end{proof}

%%%%%%%%%%%%%%%%%%%%%%%%%%%%%%%%%%%%%%%%%%%%%%
%%%%%%%%%%%%%%%%%%%%%%%%%%%%%%%%%%%%%%%%%%%%%%
\subsubsection{$\Gamma$-convergence} 

\begin{theorem}[$\Gamma$-convergence in the linear setting] \label{thm:gammaconvlin}
Let $\D_n,\D \subset Y\times Y$ be closed, nonempty sets, and let $\Ccal_{\lin}$ be given by \eqref{eq:Ccal}. Moreover, suppose that \begin{enumerate} [label=(\roman*)]
    \item The distance functions to $\D_n$ and $\D$ are \emph{uniformly} $(p,q)$-coercive, i.e. there are $c_1,c_2$, such that
    \[
    \D_n,\D \subset  \{ (\epsilon,\tigma) \in V \times V \colon c_1 \epsilon \cdot \tigma +c_2 > \vert \epsilon \vert^p + \vert \tigma \vert^q\};
    \]
    \item $\D_n \limeq \D$;
    \item if $\Gamma_R\neq \emptyset$, let $p\ge \frac{2d}{d+1}$.
\end{enumerate}
Then the functional $I_n$ $\Gamma$-converges to $I^{\ast}$, where \[
I^{\ast}(v) = \begin{cases}
       \int_{\Omega} \QA \dist(v,\D) \dd x, & v \in \Ccal_{\lin} \\
       \infty, & \text{else.}
\end{cases}
\]
\end{theorem}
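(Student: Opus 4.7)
My plan is to reduce everything to Theorem \ref{theorem:equiint} applied with $X = \Ccal_{\lin}$, integrands $\funct_n(v) = \dist(v,\D_n)$ and $\funct(v) = \dist(v,\D)$. By part (iii) of that theorem, it suffices to check hypotheses \ref{theorem:equiint:hypo1}--\ref{theorem:equiint:hypo4} together with the fact that the constant sequence $I = I(\cdot; \D)$ restricted to $\Ccal_{\lin}$ has sequential $\Gamma$-limit $I^\ast$.

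First I would dispatch \ref{theorem:equiint:hypo2}--\ref{theorem:equiint:hypo4}. Uniform $(p,q)$-growth of $\dist(\cdot,\D_n)$ is obtained by picking a fixed $z_0 \in \D$ and using the $\topoeq$-convergence to produce points $z_0^n \in \D_n$ with $\vert z_0^n \vert$ bounded, from which $\dist(z,\D_n) \leq \dist(z, z_0^n) \leq C(1+\vert \epsilon \vert^p + \vert \tigma \vert^q)$. Hypothesis \ref{theorem:equiint:hypo3} is automatic: the maps $z \mapsto d(z, \D_n)$ are $1$-Lipschitz in the metric $d$, and since $\dist = d^{\max(p,q)}$, this yields locally uniform H\"older, hence uniform continuity, with modulus independent of $n$. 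Hypothesis \ref{theorem:equiint:hypo4} is exactly the content of Theorem \ref{thm:equiequivalence} under the assumption $\D_n \limeq \D$.

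The main work is in \ref{theorem:equiint:hypo1}. Given a bounded sequence $v_n = (\epsilon_n,\tigma_n) \in \Ccal_{\lin}$ with $v_n \rightharpoonup v = (\epsilon,\tigma)$, I need a $(p,q)$-equi-integrable sequence $w_n \in \Ccal_{\lin}$ with $w_n - v_n \to 0$ in measure. Writing $\epsilon_n - \epsilon = \tfrac12(\nabla + \nabla^T)(u_n - u)$ with $u_n - u$ obtained via Korn--Poincar\'e (Lemma \ref{l:Korn}), I apply Corollary \ref{coro:boundaryequiint}\ref{item:cor1} to the symmetric-gradient operator $\B_1$ to produce an equi-integrable $\tilde\epsilon_n = \B_1 \tilde u_n$ with $\tilde u_n - u$ compactly supported in $\Omega$, which preserves both Dirichlet and Navier-slip data for $u$. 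Simultaneously, treating the constraint $-\Div\tigma = f - \nabla \pi$ in the pair $(\tigma_n,\pi_n)$ with the operator $\A_2$, Corollary \ref{coro:boundaryequiint}\ref{item:cor2} yields equi-integrable $(\tilde\tigma_n,\tilde\pi_n)$ with $\tilde\tigma_n - \tigma$ compactly supported, hence preserving the Neumann/Robin conditions. The resulting $w_n = (\tilde\epsilon_n, \tilde\tigma_n)$ lies in $\Ccal_{\lin}$, is $(p,q)$-equi-integrable, and converges to $v_n$ in $L_r \times L_{r'}$ for some $r<p$, $r'<q$, giving convergence in measure.

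Finally, I establish that the constant sequence has $\Gamma$-limit $I^\ast$. The liminf inequality follows from Proposition \ref{proposition:wlsc} applied to the $\A$-quasiconvex integrand $\QA\dist(\cdot,\D)$ (which inherits $(p,q)$-growth from $\dist$), combined with weak closedness of $\Ccal_{\lin}$. For the limsup inequality, given $v \in \Ccal_{\lin}$, Proposition \ref{prop:relaxationlinear} supplies an almost-recovery sequence $v_n^\eps$ preserving the constant-rank constraints but not a priori the boundary data; I then modify it by Corollary \ref{coro:boundaryequiint} to compactly supported perturbations, which restores the boundary conditions without affecting $\A v_n^\eps$ or the equi-integrability, hence the integral of $\funct$. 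Coercivity of $I$ on $\Ccal_{\lin}$, furnished by Lemma \ref{l:coercivity_omega} since $\dist(\cdot,\D)$ is $(p,q)$-coercive by hypothesis, provides a uniform $V$-bound on the recovery sequence that is independent of $\eps$, allowing the diagonal extraction described in Remark \ref{remark:coerc}. The hardest step is the joint preservation of constraints and boundary data in the equi-integrable modification, which is precisely what Corollary \ref{coro:boundaryequiint} is tailored to; once this is in place, Theorem \ref{theorem:equiint}\ref{theorem:equiint:3} closes the argument.
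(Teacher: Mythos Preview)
Your proposal is correct and follows essentially the same route as the paper: apply Theorem \ref{theorem:equiint} with $X=\Ccal_{\lin}$, verify \ref{theorem:equiint:hypo1} via Corollary \ref{coro:boundaryequiint}, \ref{theorem:equiint:hypo4} via Theorem \ref{thm:equiequivalence}, and identify the $\Gamma$-limit of the constant sequence as $I^\ast$ through Proposition \ref{prop:relaxationlinear} together with coercivity (Lemma \ref{l:coercivity_omega}, Remark \ref{remark:coerc}). Your write-up is in fact more explicit than the paper's on two points the paper leaves implicit: the weak closedness of $\Ccal_{\lin}$ (needed both for Theorem \ref{theorem:equiint} and for the liminf side), and the fact that the recovery sequence from Proposition \ref{prop:relaxationlinear} must be post-processed via Corollary \ref{coro:boundaryequiint} to restore the boundary conditions before it actually lies in $\Ccal_{\lin}$.
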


\begin{proof}The hypotheses of Theorem \ref{theorem:equiint} are all satisfied with $\funct_n=\dist(\cdot,\D_n)$, $\funct=\dist(\cdot,\D)$ and $X=\Ccal_{\lin}$. Indeed, \ref{theorem:equiint:hypo1} is Corollary \ref{coro:boundaryequiint}, \ref{theorem:equiint:hypo4} is the assumption $\D_n \limeq \D$ and \ref{theorem:equiint:hypo2} is satisfied by distance functions of sets, such that $\D, \D_n \cap B(0,R) \neq \emptyset$ for some $R>0$. This in turn follows from nonemptyness and $\D_n \limeq \D$. Condition \ref{theorem:equiint:hypo3} follows from the fact that the functions $\funct$ in our setting are distance functions, hence even locally Lipschitz continuous. Finally, the set $X=\Ccal_{\lin}$ is weakly closed because for a bounded sequence $v_n=(\epsilon_n,\tigma_n)\subset V$ the pressure $\pi_n$ satisfies, after suitable renormalisation,
\begin{align*}
  \norm{\pi_n}_{L_q}\le C\bra{\norm{\tigma_n}_{L_q}+\norm{f}_{W^{-1}_q}}    
\end{align*}
and is thus also bounded. Since the differential constraints \eqref{def:Ccal1} are linear, it is possible to take the limit for a subsequence.
Therefore, Theorem \ref{theorem:equiint} implies that $I_n$ $\Gamma$-converges to the $\Gamma$-limit of $I$, which is given by $I^\ast$ due to Proposition \ref{prop:relaxationlinear}.
\end{proof}

\begin{remark}
    Theorem \ref{thm:equiequivalence} establishes equivalence between data convergence and uniform convergence of $J_n$ towards $J$ if there is \textit{no} differential constraint $\A v=0$. It is not clear whether such an equivalence holds for the constrained functionals $I_n$ and $I$. Indeed, in an abstract degenerate setting, e.g. $\ker \A [\xi] =\{0\}$ for all $\xi \in \R^d \setminus \{0\}$, so that only constant functions are in $\ker \A$, it is easy to see that the equivalence does not hold. Indeed, uniform approximation for bounded/equi-integrable functions in the constraint set $\Ccal$ is equivalent to \emph{pointwise} uniform approximation on bounded sets. That is, there are $R_n \to \infty$ and $\tilde{a}_n \to 0$, such that for all $z \in \D$ with $\dist(z,0) \leq R_n$
    \[
    \dist(z,\D_n) \leq \tilde{a}_n.
    \]
    This is considerably weaker than the notions of convergence introduced in Definition \ref{def:boundedconv} and Definition \ref{def:equiconv}. A similar notion holds for fine approximation.
    Nevertheless, from a physical viewpoint, the pointwise data convergence $\D_n \limeq \D$ is a reasonable assumption and we are thus not interested in a complete characterisation of convergence for the constrained functionals.
\end{remark}

\subsection{Fluids with Inertia} \label{subsec:semilin}

In this subsection we consider the system of differential constraints, corresponding to a fluid with inertia 
\begin{equation}\tag{nlD} \label{eq:semilinear}
    \begin{cases}
     \epsilon = \tfrac 12\left(\nabla u +\nabla u^T\right)& \\
     \diverg u = 0& \\
     -\diverg \tilde{\sigma}= f-\nabla \pi -(u \cdot \nabla) u.&
    \end{cases}
\end{equation}
Regarding the boundary conditions, we make the following assumptions throughout this subsection:
\begin{enumerate}[label=(B\arabic*)]
    \item \label{B1} $\Gamma_N=\emptyset$, i.e. there are only no-slip and Navier-type boundary conditions;
    \item \label{B2} $\Gamma_D\neq \emptyset$;
    \item \label{B3} One of the following two statements is true
    \begin{enumerate}[label=(B3\alph*)]
    \item\label{it1} $p>2$;
    \item\label{it3} $g=0$ and $g_\nu=0$.  
    \end{enumerate}
\end{enumerate}
Note that assumption \ref{it3} represents the important case of a non-permeable boundary.
In comparison to the linear problem  \eqref{def:Ccal1}, the set \eqref{eq:semilinear} of differential constraints admits a \emph{direct} coupling between $\epsilon$ and $\tigma$ through the inertial term $(u \cdot \nabla) u$. For this set of constraints to still be meaningful, the inertial term $(u \cdot \nabla) u$ needs to be in the same space as $f$, $\diverg \tigma$, and $\nabla \pi$. Since $u\in W^{1}_p(\Omega;\R^d)$, for $p<d$ (otherwise we use $u\in W^{1}_r(\Omega;\R^d)$ for all $r<d$), we have by embedding $u\in L_{dp/(d-p)}(\Omega;\R^d)$ and thus $u\otimes u\in L_{dp/(2d-2p)}(\Omega;\R^{d\times d})$, which implies $(u \cdot \nabla) u=\diverg(u\otimes u)\in W^{-1}_{dp/(2d-2p)}(\Omega;\R^d)$. In order for this space to be contained in $W^{-1}_q(\Omega;\R^d)$, we must have
\begin{align}
    q=\frac{p}{p-1}\le \frac{dp}{2d-2p},
\end{align}
which implies 
\begin{equation} \label{exponentbound}
p \geq \frac{3d}{d+2}.
\end{equation}
Throughout this section we assume that \eqref{exponentbound} holds. This includes the Newtonian case $p=2$ in the physical dimensions $d=2,3$. Since we have
\begin{align*}
    p \geq \frac{3d}{d+2} \geq \frac{2d}{d+1},
\end{align*}
condition \eqref{eq:cond_p_Navier} is always satisfied. Hence, the Navier boundary condition \ref{Navier} is well-defined.

In this subsection we consider the constraint set
\begin{align}\tag{nlC}\label{eq:Ccal_semi}
    \Ccal_{\nl}\coloneqq\{(\epsilon,\tigma)\in V\colon \eqref{eq:semilinear},\ref{Dirichlet},\text{ and }\ref{Navier}\text{ are satisfied.}\}
\end{align}

\subsubsection{Coercivity in the semilinear case}
In this subsection we check that functionals of the form \eqref{eq:I_f}, with $\Ccal_{\nl}$ given by \eqref{eq:Ccal_semi}, are still coercive.

\begin{lemma}[Coercivity in the semi-linear setting]\label{lemma:coercsemi}
    Let $p \geq 3d/(d+2)$ and assume that the assumptions \ref{B1}--\ref{B3} hold. Let $\funct$ be $(p,q)$-coercive and let $\Ccal_{\nl}$ be given by \eqref{eq:Ccal_semi}. Then there are constants $C_3,C_4>0$, such that 
    \begin{equation}
        I(v) = \int_{\Omega} \funct(\epsilon,\tigma) \dd x \geq C_3 \left( \Vert \epsilon \Vert_{L_p}^p + \Vert \tigma \Vert_{L_q}^q\right)-C_4.
    \end{equation}
\end{lemma}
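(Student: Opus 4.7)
The plan is to follow the strategy of Lemma \ref{l:coercivity_omega} verbatim and reduce everything to controlling the cross term $\int_\Omega \epsilon\cdot \tigma \dd x$. Starting from the $(p,q)$-coercivity of $\funct$ gives
\begin{equation*}
    I(v) \geq C_1\bra{\norm{\epsilon}_{L_p}^p + \norm{\tigma}_{L_q}^q}- C_2\vert \Omega \vert - \gamma \int_\Omega \epsilon \cdot \tigma \dd x,
\end{equation*}
so the whole proof reduces to an upper bound on this last integral by a fraction of $\norm{\epsilon}_{L_p}^p+\norm{\tigma}_{L_q}^q$ plus a constant. Assumptions \ref{B1}--\ref{B2} ensure via Lemma \ref{l:Korn}\ref{it:korn2} that $\norm{u}_{W^1_p} \leq C(1+\norm{\epsilon}_{L_p})$, where $u$ is the velocity field uniquely associated to $\epsilon$ through the boundary conditions.

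To handle $\int_\Omega \epsilon \cdot \tigma \dd x$, I would repeat the integration by parts of Lemma \ref{l:coercivity_omega}, but now use the full force balance $-\diverg \tigma + \nabla \pi = f-(u\cdot \nabla) u$ from \eqref{eq:semilinear} instead of the linear one. Together with $\diverg u=0$ this yields
\begin{equation*}
    \int_\Omega \epsilon \cdot \tigma \dd x = \int_\Omega u \cdot f \dd x - \int_\Omega u \cdot (u\cdot \nabla) u \dd x + \int_{\partial \Omega} u \cdot (\tigma - \pi \id)\nu \dd \Haus^{d-1}.
\end{equation*}
A further integration by parts, using $\diverg u = 0$ once more, turns the new bulk term into a boundary integral,
\begin{equation*}
    \int_\Omega u\cdot (u\cdot \nabla) u \dd x = \tfrac 12\int_{\partial \Omega} (u \cdot \nu)\abs{u}^2 \dd \Haus^{d-1}.
\end{equation*}
The linear contributions (the bulk integral against $f$ and the boundary trace of $\tigma - \pi\id$) are estimated exactly as in \eqref{eq:f_est_lin}, \eqref{eq:dir}, \eqref{eq:rob}, and \eqref{eq:rob2}, where $\Gamma_N = \emptyset$ by \ref{B1} removes the Neumann contribution.

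The main new obstacle is the quadratic boundary integral. Plugging in the boundary conditions \ref{Dirichlet} and \ref{Navier} gives
\begin{equation*}
    \tfrac 12\int_{\partial\Omega}(u\cdot\nu)\abs{u}^2 \dd \Haus^{d-1} = \tfrac 12\int_{\Gamma_D}(g\cdot \nu) \abs{g}^2 \dd \Haus^{d-1}+ \tfrac 12\int_{\Gamma_R} g_\nu \abs{u}^2 \dd \Haus^{d-1}.
\end{equation*}
The first summand depends only on the boundary data $g$ and is a bounded constant (and vanishes under \ref{it3}). The second summand is where the dichotomy in \ref{B3} enters: under \ref{it3} it vanishes identically because $g_\nu = 0$; under \ref{it1} we have $p>2$, and H\"older's inequality together with the trace theorem and Korn--Poincar\'e yields
\begin{equation*}
    \left\vert \int_{\Gamma_R} g_\nu \abs{u}^2 \dd \Haus^{d-1}\right\vert \leq C\norm{u}_{L_p(\Gamma_R)}^2 \leq C\bra{1+\norm{\epsilon}_{L_p}}^2,
\end{equation*}
and the subcriticality $2<p$ lets us absorb this via Young's inequality into $\eta\norm{\epsilon}_{L_p}^p + C_\eta$ for $\eta$ arbitrarily small. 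Collecting all estimates and applying Young's inequality as at the end of the proof of Lemma \ref{l:coercivity_omega} yields the claimed coercivity. The only genuine difficulty is thus the boundary integral coming from the inertial term, and assumption \ref{B3} is tailored precisely to ensure that this integral is either zero or strictly subcritical in $\norm{u}_{W^1_p}$.
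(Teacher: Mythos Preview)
Your proposal follows essentially the same route as the paper's proof: integrate by parts using the nonlinear force balance, convert the inertial bulk term into the boundary flux $\tfrac12\int_{\partial\Omega}(u\cdot\nu)\abs{u}^2$, and then split into cases \ref{it1} and \ref{it3}. Two small points deserve attention. First, in the nonlinear setting the trace estimate for $(\tigma-\pi\id)\nu$ is no longer exactly \eqref{eq:tigma_pi_est} from the linear lemma: since $-\diverg\tigma+\nabla\pi = f-(u\cdot\nabla)u$, the bound picks up an extra $\norm{u}_{W^1_p}^2$, so the ``linear'' boundary contributions in \eqref{eq:dir} and \eqref{eq:rob2} are now quadratic in $\norm{\epsilon}_{L_p}$ rather than linear. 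This is harmless in case \ref{it1} (you absorb via $p>2$ anyway) and irrelevant in case \ref{it3} (the terms multiplied by $g$ or $g_\nu$ vanish), but your phrasing ``estimated exactly as in'' is not quite right. Second, your H\"older estimate $\int_{\Gamma_R} g_\nu\abs{u}^2 \leq C\norm{u}_{L_p(\Gamma_R)}^2$ implicitly requires $g_\nu\in L_{p/(p-2)}(\Gamma_R)$, which the embedding $W^{1-1/p}_p\hookrightarrow L_{p/(p-2)}$ does not always provide under the standing hypotheses (e.g.\ it fails for $d=3$ and $2<p<7/3$). The paper instead uses the sharper embedding $W^{1-1/p}_p(\partial\Omega)\hookrightarrow L_3(\partial\Omega)$, valid precisely when $p\geq 3d/(d+2)$, to bound the term by $\norm{g_\nu}_{L_3}\norm{u}_{L_3(\Gamma_R)}^2\leq C\norm{u}_{W^1_p}^2$. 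With that adjustment your argument goes through.
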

\begin{proof}
 Similarly to the proof of Lemma \ref{l:coercivity_omega}, we need to  estimate $\int \epsilon \cdot \tigma \dd x$, as for any $(\epsilon,\tigma) \in Y \times Y$
\begin{align}
    \funct(\epsilon,\tigma) \geq C_1  (\vert \epsilon \vert^p + \vert \tigma \vert^q)-C_2 - \gamma \epsilon \cdot \tigma.\label{eq:coerc_nonl}
\end{align}
Since $v\in \Ccal_{\nl}$, there is a $u$ such that 
\[
    \epsilon = \tfrac{1}{2}\left(\nabla u + \nabla u^T\right),
\]
for some $u$, where 
\begin{align}\label{eq:u_Korn_}
    \Vert u \Vert_{W^1_p} \leq C\bra{1+\Vert \epsilon \Vert_{L_p}}
\end{align}
due to the Korn--Poincar\'e inequality, Lemma \ref{kopo} and Lemma \ref{l:Korn}. Furthermore, we have the estimate
\begin{align}\label{eq:tigma_pi_est}
    \norm{\tigma \nu}_{W^{-1/q}_q(\partial\Omega)}+\norm{\pi \nu}_{W^{-1/q}_q(\partial\Omega)}&\le C\bra{\norm{\tigma}_{L_q}+\norm{f}_{W^{-1}_q}+\norm{u}^2_{W_p^1}},
\end{align}
which is due to $- \Div \tigma+\nabla \pi=f-\bra{u\cdot \nabla}u$.\\
   
   Indeed, repeating the calculation from the proof of Lemma \ref{l:coercivity_omega} and then using the nonlinear force balance, we obtain 
   \begin{align}
       \int_{\Omega} \epsilon \cdot \tigma \dd x&= -\int_{\Omega} u \cdot (\diverg \tigma - \nabla\pi) \dd x +\int_{\partial \Omega} u \cdot (\tigma - \pi \id)\nu\dd \mathcal{H}^{d-1} \nonumber\\
       &=\int_{\Omega} u \cdot (u \cdot \nabla) u + u\cdot f \dd x + \int_{\partial  \Omega} u \cdot (\tigma - \pi \id) \nu \dd \mathcal{H}^{d-1} \nonumber\\
       &=\int_{\Omega} \diverg\left( \frac 12 u  \vert u \vert^2\right) + u\cdot f \dd x + \int_{\partial \Omega}   u \cdot (\tigma - \pi \id) \nu \dd \mathcal{H}^{d-1}  \nonumber\\
       &= \int_{\Omega} u\cdot f \dd x+\int_{\partial \Omega}  \frac 12 (u \cdot \nu)  \vert u \vert^2 + u \cdot (\tigma - \pi \id) \nu \dd \mathcal{H}^{d-1} \label{eq:eps_tigma}.
   \end{align}
    For the first term  we use \eqref{eq:u_Korn_} to bound 
\begin{align}
   \left\vert\int_{\Omega} u \cdot f \dd x\right\vert &\le \Vert u \Vert_{W^{1}_p} \Vert f \Vert_{W^{-1}_q} 
   \le C\bra{1+\Vert \epsilon \Vert_{L_p}}  \Vert f \Vert_{W^{-1}_q}.\label{eq:f_est_nonl}
\end{align}
 For the boundary term we consider the cases \ref{it1} and\ref{it3} separately.\\
  \textbf{Case \ref{it1}:} We split $\partial\Omega=\overline{\Gamma_D\cup \Gamma_R}$ and start with
\begin{align}
    \int_{\Gamma_D} & \frac 12(u \cdot \nu) \vert u \vert^2 -  u \cdot (\tigma - \pi \id)\nu \dd \mathcal{H}^{d-1}
    =
    \int_{\Gamma_D} \frac 12(g \cdot \nu) \vert g \vert^2 -  g \cdot (\tigma - \pi \id)\nu \dd \mathcal{H}^{d-1}\nonumber\\
    &\leq \Vert g \Vert^3_{L_3(\Gamma_D)} + \Vert g \Vert_{W^{1-1/p}_p(\Gamma_D)} \bra{\Vert \tigma \nu \Vert_{W^{-1/q}_q(\Gamma_D)}+\norm{\pi\nu}_{W^{-1/q}_q(\Gamma_D)}}\nonumber\\
    &\leq C\left(1+\norm{u}^2_{W^1_p}+\norm{\tigma}_{L_q}\right)\nonumber\\
    &\le C\left(1+\norm{\epsilon}^2_{L_p}+\norm{\tigma}_{L_q}\right).\label{eq:1D}
\end{align}
 Note that $W^{1-1/p}_p(\Gamma_D)$ embeds into $L_3(\partial \Omega)$, whenever \[
       \frac{1}{3} \ge \frac{1}{p} + \frac{1-1/p}{d-1}.
       \]
This holds in view of assumption \eqref{exponentbound}.
For the other part of the boundary we estimate
\begin{align}
        \int_{\Gamma_R} & \frac 12(u \cdot \nu) \vert u \vert^2 -  u \cdot (\tigma - \pi \id)\nu \dd \mathcal{H}^{d-1} \nonumber \\
        & =\int_{\Gamma_R} \frac 12 g_\nu \vert u \vert^2 -  g_\nu \nu\cdot (\tigma - \pi \id)\nu +\lambda\abs{P_{T_x\partial\Omega}u}^2
        -P_{T_x\partial\Omega}u \cdot h_\tau\dd \mathcal{H}^{d-1}.\label{eq:1R}
\end{align}
For the terms without sign we obtain
\begin{align}
        &\left|\int_{\Gamma_R} \frac 12 g_\nu \vert u \vert^2 -  g_\nu \nu\cdot (\tigma - \pi \id)\nu -P_{T_x\partial\Omega}u \cdot h_\tau\dd \mathcal{H}^{d-1} \right| \nonumber \\
        &\quad \leq 
        \norm{g_\nu}_{L_3(\Gamma_R)}\norm{u}_{L_3(\Gamma_R)}^2+\norm{g_\nu}_{W^{1-1/p}_p(\gamma_R)}\bra{\norm{\tigma\nu}_{W^{-1/q}_q(\Gamma_R)}+\norm{\pi\nu}_{W^{-1/q}_q(\Gamma_R)}}
        \nonumber \\
        & \quad\quad
        +\norm{h_\tau}_{W^{-1/q}_q(\Gamma_R)}\norm{u}_{W^{1-1/p}_p(\Gamma_R)}\nonumber\\
        &\quad \leq C\left(1+\norm{u}^2_{W^1_p}+\norm{\tigma}_{L_q}\right) \nonumber\\
        &\quad \leq C\left(1+\norm{\epsilon}^2_{L_p}+\norm{\tigma}_{L_q}\right).\label{eq:1R2}
\end{align}
   Inserting \eqref{eq:1R} into \eqref{eq:eps_tigma} and using the result together with \eqref{eq:f_est_nonl}, \eqref{eq:1D}, \eqref{eq:1R2}, and the $(p,q)$-coercivity of $\funct$, yields
\begin{align*}
    I(v) 
    &\geq C_1 \left(\Vert \epsilon \Vert_{L_p}^p+\Vert \tigma \Vert_{L_q}^q  \right) - C_2 - \gamma \int_{\Omega} \epsilon \cdot \tigma \dd x\nonumber \\
    & \geq  C_1 \left(\Vert \epsilon \Vert_{L_p}^p+\Vert \tigma \Vert_{L_q}^q\right)- C \bra{1+\Vert \epsilon \Vert^2_{L_p}+\Vert \tigma \Vert_{L_q}} \nonumber\\
    &\geq \frac{C_1}{2} \left(\Vert \epsilon \Vert_{L_p}^p+\Vert \tigma \Vert_{L_q}^q \right) -C,
\end{align*}
where we use Young's inequality and the fact that $p>2$.\\

\noindent\textbf{Case \ref{it3}:} Since $g=0$ and $g_\nu=0$, the boundary term simplifies to
 \begin{align}
         \int_{\partial \Omega} \frac 12(u \cdot \nu) \vert u \vert^2 -  u \cdot (\tigma - \pi \id)\nu \dd \mathcal{H}^{d-1} 
         & =-\int_{\Gamma_R} P_{T_x \partial\Omega}u \cdot P_{T_x \partial\Omega}(\tigma \nu) \dd \mathcal{H}^{d-1} \nonumber \\
         &= \int_{\Gamma_R} \lambda\abs{P_{T_x \partial\Omega}u}^2 -P_{T_x \partial\Omega}u\cdot h_\tau \dd \mathcal{H}^{d-1}\label{eq:3R}.
\end{align}
For the term without sign we obtain
\begin{align}
    \left\vert\int_{\Gamma_R} P_{T_x \partial\Omega}u\cdot h_\tau \dd \mathcal{H}^{d-1}\right\vert
    \le \Vert u \Vert_{W^{1-1/p}_p(\Gamma_R)}\norm{h_\tau}_{W^{-1/q}_q(\Gamma_R)}
    \leq C \left(1+\norm{\epsilon}_{L_p}\right)\label{eq:3R2}
\end{align}
By inserting \eqref{eq:3R} into \eqref{eq:eps_tigma} and using \eqref{eq:f_est_nonl}, \eqref{eq:3R2} and the $(p,q)$-coercivity of $\funct$, we obtain
\begin{align*}
    I(v) 
    &\geq 
    C_1 \left(\Vert \epsilon \Vert_{L_p}^p+\Vert \tigma \Vert_{L_q}^q  \right)-C_2 - \gamma\int_{\Omega} \epsilon \cdot \tigma \dd x\nonumber \\
    & \geq  
    C_1 \left(\Vert \epsilon \Vert_{L_p}^p+\Vert \tigma \Vert_{L_q}^q\right)- C\bra{1+\Vert \epsilon \Vert_{L_p}} \nonumber\\
    &\geq 
    \frac{C_1}{2} \left(\Vert \epsilon \Vert_{L_p}^p+\Vert \tigma \Vert_{L_q}^q \right) -C,
\end{align*}
where we use again Young's inequality.

    \end{proof} 
  \bigskip
\subsubsection{Continuity of $\,\Theta(u) = u \otimes u $}
  To verify the assumptions of Theorem \ref{thm:relaxationnonlinear}, in particular the weak closedness of $\Ccal_{\ln}$, we show that the map 
  \[
  u \longmapsto u \otimes u
  \]
   is continuous from the weak topology of $W^{1}_p(\Omega;\R^d)$ to the strong topology of $L_r(\Omega;Y)$ for some $r>q$.
   
\begin{lemma}\label{l:theta}
   Let $p>3d/(d+2)$. Then there is an $r > q = p/(p-1)$, such that $\Theta$ is continuous from $W^{1}_p(\Omega;\R^d)$, equipped with the weak topology, into to $L_r(\Omega;Y)$.
\end{lemma}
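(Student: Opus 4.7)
The plan is to invoke Rellich--Kondrachov compactness together with the bilinearity of $\Theta$ and Hölder's inequality. The crucial arithmetic observation is that the strict inequality $p > 3d/(d+2)$ is equivalent to $p^\ast/2 > q$, where $p^\ast = dp/(d-p)$ for $p<d$ (and $p^\ast := \infty$ for $p\geq d$) is the Sobolev exponent; indeed, the inequality $dp/(2(d-p)) > p/(p-1)$ rearranges to $p(d+2) > 3d$. Hence one may select $r$ with $q < r < p^\ast/2$, which in particular yields $2r < p^\ast$.

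Since $\Omega$ is bounded with $C^1$-boundary, Rellich--Kondrachov provides the compact embedding $W^1_p(\Omega;\R^d) \cembed L_{2r}(\Omega;\R^d)$. Consequently, any sequence $u_n \rightharpoonup u$ weakly in $W^1_p(\Omega;\R^d)$ is bounded in $L_{2r}(\Omega;\R^d)$ and converges strongly $u_n \to u$ in $L_{2r}(\Omega;\R^d)$. The algebraic identity $u_n \otimes u_n - u \otimes u = (u_n - u) \otimes u_n + u \otimes (u_n - u)$ combined with Hölder's inequality on the pair of exponents $(2r,2r)$ then yields
\begin{equation*}
    \|u_n \otimes u_n - u \otimes u\|_{L_r}
    \leq
    \bigl(\|u_n\|_{L_{2r}} + \|u\|_{L_{2r}}\bigr)\, \|u_n - u\|_{L_{2r}}
    \longrightarrow 0,
\end{equation*}
establishing the claim. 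If $\Theta$ is defined by the trace-free normalisation $\Theta(u) = u\otimes u - \tfrac{1}{d}|u|^2\id$ (so that $\Theta(u)\in Y$), the scalar $|u|^2 = u\cdot u$ obeys the identical bilinear Hölder bound and the same estimate goes through unchanged.

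There is no genuine obstacle here: the only point that requires care is that the \emph{strict} inequality $p > 3d/(d+2)$ is precisely what is needed for the \emph{strict} inequality $p^\ast/2 > q$, and hence for the existence of an admissible exponent $r$. At the borderline $p = 3d/(d+2)$ one can only take $r = q$, in which case Hölder yields continuity from the weak topology of $W^1_p$ into the weak topology of $L_q$, but concentration effects (and hence loss of strong convergence through the bilinear map) cannot be ruled out. This is exactly the reason the lemma imposes the strict inequality.
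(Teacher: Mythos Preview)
Your proof is correct and follows essentially the same approach as the paper: both invoke Rellich--Kondrachov to obtain strong convergence in $L_{2r}$ for some $2r$ below the critical Sobolev exponent, and both identify the arithmetic condition $p>3d/(d+2)$ as equivalent to the existence of an $r$ with $q<r<p^\ast/2$. Your version is in fact more explicit, spelling out the bilinear identity and the H\"older estimate that the paper leaves implicit, and your uniform treatment of $p\geq d$ via the convention $p^\ast=\infty$ is cleaner than the paper's reduction to the case $p<d$ by a preliminary embedding.
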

In view of Korn's inequality (Lemma \ref{kopo}) bounded sets in $L_p(\Omega;Y)$ are mapped to bounded sets in $W^1_p(\Omega;\R^d)$ by the map $\epsilon\mapsto u$. Hence, the map $\Theta$ might also be seen as a map $\epsilon \mapsto u \otimes u$. 
 
\begin{proof} For $p\geq d$ the result immediately follows from the case $p<d$ by first embedding into $W^{1}_\tau(\Omega;\R^d)$ for some $\tau<d$. Thus, let $p<d$. Then $W^{1}_p(\Omega;\R^d)$ embeds compactly into $L_s(\Omega;\R^d)$ for all $s < dp/(d-p)$. In particular, for every weakly convergent sequence $u_n \subset W^{1}_p(\Omega;\R^d)$, the sequence 
\[
   \Theta(u_n)= u_n \otimes u_n
\]
converges strongly in $L_r(\Omega;\R^d)$ for $r<dp/(2d-2p)$. This can be satisfied at the same time as $r>q=p/(p-1)$ if and only if $p>3d/(d+2)$.

% \rkchange{Consequently, for every weakly convergent sequence $u_n \subset W^{1}_p(\Omega;\R^d)$ obeying $\diverg u_n=0$, the sequence 
% \[
%   \Theta(u_n)= u_n \otimes u_n
% \]
%   still converges weakly to $\Theta(u)$ in $W^{1}_t(\Omega;Y)$. The exponent $t\in(1,\infty)$ is given in terms of $s$ and $p$ via \[
%   \frac{1}{t} = \frac{1}{s} + \frac{1}{p}.
%   \]
%   Consequently, $u_n \otimes u_n \rightharpoonup u \otimes u$ in $W^{1}_t(\Omega;Y)$, whenever\[
%   t < \frac{dp}{2d-p}.
%   \]
%   Due to the compact Sobolev embedding, we have $W^1_t(\Omega;Y) \hookrightarrow \hookrightarrow L_r(\Omega;Y)$
%   for $r < dt/(d-t)$. Therefore, $\Theta$ maps $W^1_p(\Omega;\R^d)$, equipped with  the weak topology, continuously to $L_r(\Omega;Y)$ in the strong topology, whenever
%   \[
%   r < \frac{d \frac{dp}{2d-p}}{d - \frac{dp}{2d-p}}.
%   \]
%   This and the condition $q=\frac{p}{p-1} < r$ can be satisfies at the same time if
%   \[
%   p > \frac{3d}{d+2},
%   \]
%   which is assumption \eqref{exponentbound}.}
\end{proof}
   
\subsubsection{$\Gamma$-convergence with semilinear constraint.}

% %  \[
% % I_n(v) = \begin{cases}
%       \int_{\Omega} \dist(v(x), \D_n) \dd x & \text{if } v \in \Ccal \\ \infty & \text{else.}
% \end{cases}
% \]
\begin{theorem}[$\Gamma$-convergence in the semilinear setting] \label{thm:gammaconvsemilin}
Let $\D_n,\D \subset Y\times Y$ be closed, nonempty sets and let $\Ccal_{\nl}$ be given by \eqref{eq:Ccal_semi}. Moreover, suppose that: 
\begin{enumerate} [label=(\roman*)]
    \item The distance functions to $\D_n$ and $\D$ are uniformly $(p,q)$-coercive, i.e. there are $c_1,c_2$, such that 
    \[
    \D_n,\D \subset  \{ (\epsilon,\tigma) \in V \times V \colon c_1 \epsilon \cdot \tigma +c_2 > \vert \epsilon \vert^p + \vert \tigma \vert^q\};
    \]
    \item $\D_n \limeq \D$;
    \item $p> \frac{3d}{d+2}$;
    \item assumptions \ref{B1}--\ref{B3} hold.
\end{enumerate}
Then the functional $I_n$ $\Gamma$-converges to $I^{\ast}$, where \[
I^{\ast}(v) = \begin{cases}
       \int_{\Omega} \QA \dist(v,\D) \dd x, & v \in \Ccal_{\nl} \\
       \infty, & \text{else.}
\end{cases}
\]
\end{theorem}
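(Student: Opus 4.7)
The approach mirrors the proof of Theorem \ref{thm:gammaconvlin} for the linear case, with the linear relaxation Proposition \ref{prop:relaxationlinear} replaced by the semilinear relaxation Theorem \ref{thm:relaxationnonlinear}. The plan is to apply Theorem \ref{theorem:equiint} with $X=\Ccal_{\nl}$, $\funct_n=\dist(\cdot,\D_n)$, and $\funct=\dist(\cdot,\D)$ to reduce the $\Gamma$-convergence of $I_n$ to the identification of the $\Gamma$-limit of the constant sequence $I$, and then to obtain the latter via Theorem \ref{thm:relaxationnonlinear}.

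First I would verify that $\Ccal_{\nl}$ is weakly closed in $V$. For a bounded sequence $v_n=(\epsilon_n,\tigma_n)\in \Ccal_{\nl}$ with $v_n\rightharpoonup v$, the Korn--Poincar\'e inequality (Lemma \ref{l:Korn}\ref{it:korn2}, applicable since $\Gamma_D\neq \emptyset$ by \ref{B2}) yields a bounded sequence $u_n\in W^1_p(\Omega;\R^d)$, and the a priori estimate \eqref{eq:tigma_pi_est} together with the bound $\norm{(u_n\cdot \nabla)u_n}_{W^{-1}_q}\le C\norm{u_n}_{W^1_p}^2$ gives a bounded $\pi_n\in L_q(\Omega)$ after renormalisation. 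The crucial ingredient is Lemma \ref{l:theta}, which ensures $(u_n\cdot \nabla)u_n\to (u\cdot \nabla)u$ strongly in $W^{-1}_r\hookrightarrow W^{-1}_q$ for some $r>q$, so that we may pass to the weak limit in \eqref{eq:semilinear} and in the boundary conditions \ref{Dirichlet} and \ref{Navier}.

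The hypotheses \ref{theorem:equiint:hypo2}--\ref{theorem:equiint:hypo4} of Theorem \ref{theorem:equiint} are then verified exactly as in the linear proof: the $(p,q)$-growth of the distance functions follows from uniform $(p,q)$-coercivity together with $\D_n,\D\cap B(0,R)\neq \emptyset$ for some $R>0$; distance functions are Lipschitz with respect to the metric $d$ from \eqref{eq:def_metric}, which yields uniform continuity on bounded sets; and the uniform convergence on $(p,q)$-equi-integrable sets follows from $\D_n\limeq \D$ via Theorem \ref{thm:equiequivalence}. What remains is hypothesis \ref{theorem:equiint:hypo1}, and this is where the nonlinear coupling makes the argument genuinely harder than in the linear case. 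Given a bounded sequence $v_n\in \Ccal_{\nl}$, extract a weakly convergent subsequence $v_n\rightharpoonup v$ and apply Corollary \ref{coro:boundaryequiint}\ref{item:cor1} to obtain $\tilde{u}_n\in W^1_p(\Omega;\R^d)$ with the same boundary traces on $\Gamma_D$ and $\Gamma_R$ as $u_n$, such that $\tilde\epsilon_n=\tfrac12(\nabla\tilde u_n+\nabla \tilde u_n^T)$ is $p$-equi-integrable and $\tilde\epsilon_n-\epsilon_n\to 0$ in $L_r$ for some $r<p$. Next, use Corollary \ref{coro:boundaryequiint}\ref{item:cor2} to obtain a $q$-equi-integrable modification $\tilde\tigma_n$ of $\tigma_n$ with the same divergence $f-\nabla\pi_n-(u_n\cdot \nabla)u_n$ and matching boundary data, and finally set $\hat\tigma_n=\tilde\tigma_n+\bra{\Theta(u_n)-\Theta(\tilde u_n)}$. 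Weak-strong continuity of $\Theta$ (Lemma \ref{l:theta}) guarantees that $\Theta(\tilde u_n)-\Theta(u_n)\to 0$ in $L_r$ for some $r>q$, so $\hat\tigma_n$ remains $q$-equi-integrable, $\hat\tigma_n-\tigma_n\to 0$ in measure, and by construction $(\tilde\epsilon_n,\hat\tigma_n)\in \Ccal_{\nl}$.

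Finally, combining Theorem \ref{theorem:equiint} with Theorem \ref{thm:relaxationnonlinear} identifies $\Gamma\text{-}\lim I=I^\ast$; the coercivity from Lemma \ref{lemma:coercsemi} provides the uniform bound needed in Remark \ref{remark:coerc} to upgrade the almost-recovery sequences (with error $\varepsilon$) to genuine recovery sequences via a diagonal extraction. The principal obstacle is the verification of \ref{theorem:equiint:hypo1}: simultaneously enforcing $(p,q)$-equi-integrability, the full nonlinear force balance, and the prescribed boundary data on $\Gamma_D$ and $\Gamma_R$. Corollary \ref{coro:boundaryequiint} delivers the equi-integrability and boundary-preserving modification for the two linear subsystems, while the weak-strong continuity of the inertial term encoded in Lemma \ref{l:theta} renders the nonlinear correction asymptotically harmless, so the assembled sequence belongs to $\Ccal_{\nl}$ as required.
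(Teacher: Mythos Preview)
Your approach is the same as the paper's: reduce to Theorem \ref{theorem:equiint} with $X=\Ccal_{\nl}$, verify weak closedness via Lemma \ref{l:theta}, and identify the $\Gamma$-limit of the constant sequence via Theorem \ref{thm:relaxationnonlinear} together with the coercivity from Lemma \ref{lemma:coercsemi}. The paper's proof is in fact only two sentences and defers the heavy lifting to Theorem \ref{thm:relaxationnonlinear}; you have spelled out more than is strictly needed.

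There is, however, a small inaccuracy in your verification of \ref{theorem:equiint:hypo1}. Corollary \ref{coro:boundaryequiint}\ref{item:cor2} produces $\tilde\tigma_n$ with $\A_2\tilde\tigma_n=\A_2\tigma$, i.e.\ with the divergence of the \emph{weak limit} $\tigma$, not of $\tigma_n$ as you write. Consequently the correct nonlinear correction is $\hat\tigma_n=\tilde\tigma_n+\bigl(\Theta(\tilde u_n)-\Theta(u)\bigr)$, where $u$ is the weak limit of $u_n$; this is exactly the correction performed in the proof of Theorem \ref{thm:relaxationnonlinear}. With this choice one has $\A_2\hat\tigma_n=\A_2\Theta(\tilde u_n)$ and, since $\tilde u_n-u$ is compactly supported by Corollary \ref{coro:boundaryequiint}\ref{item:cor1}, the correction $\Theta(\tilde u_n)-\Theta(u)$ is compactly supported as well, so the boundary data are preserved. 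Your formula $\Theta(u_n)-\Theta(\tilde u_n)$ (even after the sign is fixed) would not place $(\tilde\epsilon_n,\hat\tigma_n)$ in $\Ccal_{\nl}$. This is a bookkeeping slip rather than a structural gap; once corrected, your argument goes through.
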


\begin{proof}
The proof is very similar to the proof of Theorem \ref{thm:gammaconvlin}. Indeed, as the constraint set $\Ccal_{\nl}$ is weakly closed by Lemma \ref{l:theta}, the only difficulty, given $v \in \Ccal_{\nl}$, is to find a recovery sequence lying in $\Ccal_{\nl}$. This is achieved in Theorem \ref{thm:relaxationnonlinear}.
\end{proof}

%----------------------------------------------
%----------------------------------------------
%----------------------------------------------

\section{Consistency of data-driven solutions and PDE solutions in the case of material law data} \label{sec:exact}

In this section we consider data that are given by a \emph{constitutive law}, i.e. 
\[
    \tigma = 2\mu(\vert \epsilon \vert) \epsilon, 
    \quad \epsilon \in Y,
\]
for a viscosity $\mu \colon \R \to \R$. We compare the solutions obtained by the \emph{classical PDE} approach to minimisers of the \emph{data-driven} functional. As before, we assume $\Gamma_N=\emptyset$ and call a pair $(\epsilon,\tigma) \in L_p(\Omega;Y) \times L_q(\Omega;Y)$ a \emph{weak solution} to the stationary Navier--Stokes equation, if there is $u \in W^{1}_p(\Omega;\R^d)$ and a pressure $\pi \in L_q(\Omega)$, such that
\begin{equation} \label{weakPDE}
 \begin{cases}
        \epsilon = \tfrac{1}{2} \bigl(\nabla u + \nabla u^T\bigr), 
        & x \in \Omega
        \\
        \diverg u = 0, 
        & x \in \Omega
        \\
        (u \cdot \nabla)u-\diverg(2\mu(|\epsilon|)\epsilon) +\nabla\pi=f, & x \in \Omega
        \\
        \ref{Dirichlet}, \ref{Navier}, & x\in \partial \Omega,
    \end{cases}
\end{equation}
where $\eqref{weakPDE}_3$ has to be satisfied in $W^{-1}_q(\Omega;\R^d)$.
Note that the system \eqref{weakPDE} is equivalent to
\begin{equation} \label{weakPDE2}
 \begin{cases}
        \epsilon = \tfrac{1}{2} \bigl(\nabla u + \nabla u^T\bigr), 
        & x \in \Omega
        \\
        \diverg u = 0, 
        & x \in \Omega\\
        -\diverg \tilde{\sigma} 
        = f - \nabla \pi -(u \cdot \nabla) u, 
        & 
        x \in \Omega
        \\ 
        \tigma = 2\mu(\vert \epsilon \vert) \epsilon, & x \in \Omega \\
        \ref{Dirichlet}, \ref{Navier}, & x\in \partial \Omega.
    \end{cases}
\end{equation}
We may interpret the convergence of data sets discussed in Section \ref{sec:dataconv} as an increase of the accuracy of measurement. If a constitutive law exists, then the limit $\D$ of data sets $\D_n$ should represent this law. 
Since we assume that the set $\D$ is given 
by a constitutive law $\epsilon \mapsto \tigma_c(\epsilon)$, we consider data sets
\begin{equation} \label{datasetgraph}
    \D = \{ (\epsilon,\tigma) \colon \tigma= \tigma_c(\epsilon)\}.
\end{equation}
For typical constitutive laws, a solution to the induced partial differential equation \eqref{weakPDE2} exists and it is natural to ask whether (approximate) solutions to the data-driven problem with $\D_n$ converge to a solution of \eqref{weakPDE2}. It turns out that this is true if the constitutive relation is monotone.
Indeed, assume that $(\epsilon,\tigma) \in \Ccal_{\nl}$, i.e. that the differential constraints
\begin{equation*}
    \begin{cases}
        \epsilon = \tfrac{1}{2} \bigl(\nabla u + \nabla u^T\bigr), 
        & x \in \Omega
        \\
        \diverg u = 0, 
        & x \in \Omega\\
        -\diverg \tilde{\sigma} 
        = f- \nabla \pi - (u \cdot \nabla) u, 
        & 
        x \in \Omega
    \end{cases}
\end{equation*}
are satisfied. If in addition $I(u)=0$, and thus $u$ is a minimiser, then we have
\begin{equation*}
    (\epsilon,\tigma) \in \D = \{ (\epsilon,\tigma) \colon \tigma= \tigma_c(\epsilon)\}
    \quad 
    \text{almost everywhere}.
\end{equation*}
Consequently, a minimiser of $I$ satisfying $I(u)=0$ is a solution to the partial differential equation. Conversely, given a constitutive law $\tigma_c$ and a weak solution to the partial differential equation \eqref{weakPDE2}, we may construct the  set $\D$ as in $\eqref{datasetgraph}$ and observe that any solution to the partial differential equation \eqref{weakPDE2} is also a minimiser of $I$.

%----------------------------------------------
%----------------------------------------------
 
If the data set $\D$ is a limit of measurement data sets $\D_n$, it is not clear a priori whether a sequence of (approximate) minimisers $u_n$ of $I_n$ converges weakly to a solution $u$ to the partial differential equation because we can only infer $I^{\ast}(u)=0$ and \emph{not} $I(u)=0$. This is addressed in the following proposition, which directly follows from the relaxation statement Theorem \ref{thm:gammaconvsemilin}.

%----------------------------------------------
%----------------------------------------------

\begin{proposition}
Let $p > 3d/(d+2)$ and let $\epsilon \mapsto \tigma_c(\epsilon)$ be a given constitutive law. Moreover, assume that the corresponding data set $\D$ is given by \eqref{datasetgraph}, such that the distance function $\dist(\cdot,\cdot)$ is $(p,q)$-coercive.
% Let $p>\frac{3d}{d+2}$, $\varepsilon \to \tigma(\varepsilon)$ be a constitutive law, 
% $\D$ be the corresponding data set given by \ref{datasetgraph}, such that the distance function is $(p,q)$-coercive.
If the partial differential equation \eqref{weakPDE2} admits a weak solution $v$, i.e. $\min_{v \in \Ccal} I(v)=0$, then a function $v^{\ast}$ is a minimiser of $I^{\ast}$ if and only if 
\begin{equation*}
    v^{\ast} \in \{\QA \dist((\epsilon,\tigma),\D)= 0\}
\end{equation*}
almost everywhere.
Moreover, if 
\begin{equation} \label{qcset}
    \{\QA \dist((\epsilon,\tigma),\D)= 0\} = \D,
\end{equation}
then any such approximate solution $v^{\ast}$ is already a solution to the partial differential equation \eqref{weakPDE2}.
\end{proposition}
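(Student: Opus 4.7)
The plan is to reduce everything to three elementary facts: the pointwise inequality $\QA \dist(\cdot, \D) \leq \dist(\cdot, \D)$, the nonnegativity of both $\dist$ and $\QA \dist$, and the observation that a weak solution of \eqref{weakPDE2} makes the unrelaxed functional $I$ vanish on the constraint set.

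First I would establish that $\min I^{\ast} = 0$. The hypothesised weak solution $v = (\epsilon,\tigma)$ lies in $\Ccal_{\nl}$ and satisfies $\tigma(x) = \tigma_c(\epsilon(x))$ for a.e.\ $x \in \Omega$. Hence $(\epsilon(x),\tigma(x)) \in \D$ a.e., so $\dist(v(x),\D) = 0$ a.e., and therefore $I(v) = 0$. By choosing $\psi \equiv 0$ in \eqref{def:envelope}, one has $\QA \dist \leq \dist$ pointwise, so $I^{\ast}(v) \leq I(v) = 0$. Combined with the pointwise nonnegativity $\QA \dist \geq 0$ (the envelope is an infimum of integrals of nonnegative functions), this gives $I^{\ast}(v) = 0 = \min I^{\ast}$.

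Next I would characterise the minimisers. Since $\min I^{\ast} = 0$, a function $v^{\ast}$ is a minimiser if and only if $I^{\ast}(v^{\ast}) = 0$. Finiteness of $I^{\ast}$ forces $v^{\ast} \in \Ccal_{\nl}$, and then $\int_{\Omega} \QA \dist(v^{\ast}(x),\D)\, \di x = 0$ combined with $\QA \dist \geq 0$ yields $\QA \dist(v^{\ast}(x),\D) = 0$ for a.e.\ $x$, i.e.\ $v^{\ast}(x) \in \{\QA \dist(\cdot,\D) = 0\}$ almost everywhere. The converse direction is exactly the same computation in reverse, giving the claimed equivalence.

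For the consistency statement, under hypothesis \eqref{qcset} the pointwise condition $v^{\ast}(x) \in \{\QA \dist(\cdot,\D) = 0\}$ a.e.\ becomes $(\epsilon^{\ast}(x),\tigma^{\ast}(x)) \in \D$ a.e., i.e.\ $\tigma^{\ast}(x) = \tigma_c(\epsilon^{\ast}(x))$ a.e. Together with $v^{\ast} \in \Ccal_{\nl}$, which already encodes the differential constraints \eqref{eq:semilinear} and the boundary conditions \ref{Dirichlet} and \ref{Navier}, this is precisely the weak formulation \eqref{weakPDE2}. The substantive point in the whole statement lies not in any of these manipulations but in the auxiliary hypothesis \eqref{qcset}: while $\{\dist(\cdot,\D) = 0\} = \D$ is automatic, the inclusion $\D \subset \{\QA \dist(\cdot,\D) = 0\}$ may be strict for non-monotone constitutive laws, and establishing equality in \eqref{qcset} for monotone $\tigma_c$ is the technical content that has to be developed separately in this section.
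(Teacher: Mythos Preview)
Your argument is correct and matches the paper's approach: the paper does not give a detailed proof but simply states that the proposition ``directly follows from the relaxation statement Theorem \ref{thm:gammaconvsemilin}'', and your write-up is precisely the natural unpacking of that remark via the pointwise inequalities $0 \leq \QA \dist \leq \dist$ together with the hypothesis $\min_{v \in \Ccal} I(v) = 0$. One small point worth making explicit is that the ``if'' direction of the equivalence tacitly assumes $v^{\ast} \in \Ccal_{\nl}$ (otherwise $I^{\ast}(v^{\ast}) = \infty$); you handle this correctly in the ``only if'' direction but leave it implicit in the converse, just as the paper does.
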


%----------------------------------------------
%----------------------------------------------

In the following we characterise some constitutive laws satisfying \eqref{qcset}. To this end, we study the set
\[
    \{ \QA \dist((\epsilon,\tigma),\D\} =0 \}.
\]

\begin{definition}
Let $1<p<\infty$ and $q=p/(p-1)$. For a set $\D \subset Y \times Y$ we define the \textbf{$\A$-$(p,q)$-quasiconvex hull} of $\D$ as 
\[
    \D^{(p,q)} = \left\{ (\epsilon,\tigma) \in Y \times Y \colon \QA \dist((\epsilon,\tigma),\D)=0 \right\}.
\]
We call a set $\D \subset Y \times Y$ \textbf{$\A$-$(p,q)$-quasiconvex} if $\D=\D^{(p,q)}$.
\end{definition}

%------------------------------------------------
%------------------------------------------------

\bigskip
\subsection{Newtonian fluids}
In the Newtonian setting the fluid's viscosity is constant, i.e. $\mu(|\epsilon|) \equiv \mu_0 > 0$ and hence the relation between the local strain $\epsilon$ and the viscous stress $\tigma$ is linear with $\tigma = 2\mu_0 \epsilon$. In the following, we assume without loss of generality that $\mu_0=1/2$. That is, we have $p=q=2$ and the constitutive law is given by the data set
\[
    \D_{\Newt} = \{(\epsilon,\epsilon)\colon \epsilon \in Y\} \subset Y \times Y.
\]
Note that, in terms of $\epsilon$ and $\tigma$, the Newtonian data set $\D_{\Newt}$ and the distance function $\dist(\cdot,\cdot)$ can be written as 
\begin{equation*}
   \D_{\Newt}
   =
   \left\{(\epsilon,\tigma) \colon \epsilon \cdot \tigma= \tfrac{1}{2} \left(\vert \epsilon \vert^2 + \vert  \tigma \vert^2\right)\right\} 
   \quad 
   \text{and} 
   \quad
   \dist((\epsilon,\tigma), \D_{\Newt}) = \tfrac{1}{2} \vert \epsilon - \tigma \vert^2.
\end{equation*}
Since in this case $\dist((\cdot,\cdot),\D_{\Newt})$ is already a convex function, it is also $\A$-quasiconvex and we have
\[
    \QA \dist((\epsilon,\tigma),\D_{\Newt}) 
    = 
    \dist((\epsilon,\tigma),\D_{\Newt}).
\]
Consequently, we observe that the $\A$-$(2,2)$-quasiconvex hull $\D^{(2,2)}_{\Newt}$ of $\D_{\Newt}$ is given by
\begin{equation*}
  \D^{(2,2)}_{\Newt}
    = 
    \left\{(\epsilon,\tigma) \colon \dist((\epsilon,\tigma),\D_N) = 0\right\} 
    = 
    \D_{\Newt}.
\end{equation*}
Therefore, any solution to the data-driven problem for Newtonian fluids is also a weak solution to the partial differential equation, in the sense that $u \in W^{1,p}(\Omega;\R^d)$ satisfies
\begin{equation*}
    \begin{cases}
        (u \cdot \nabla) u 
        = 
        -\nabla \pi + \frac 12\Delta u, & x \in \Omega
        \\
        \diverg u = 0, & x \in \Omega
    \end{cases}
\end{equation*}
and the boundary conditions \ref{Dirichlet}, \ref{Navier}.

%---------------------------------
%---------------------------------
\bigskip

\subsection{Power-law fluids}
In the case of power-law fluids, the constitutive law for the fluid's viscosity is $\mu(|\epsilon|) = \mu_0 |\eps|^{\alpha-1}\eps$ with given flow-consistency index $\mu_0 > 0$ and  flow-behaviour exponent $\alpha > 0$. Consequently, we have $\tigma = 2\mu_0 |\epsilon|^{\alpha-1}$. As above, we set without loss of generality $\mu_0 =1/2$. In the previously used notation, we thus
consider $1 < p < \infty$, $q=p/(p-1)$ and $\alpha = p/q = 1/(p-1)$ and suppose that the material law is given by the data set 
\[
    \D_{\Pow}
    =
    \left \{(\epsilon,\vert \epsilon \vert^{\alpha-1} \epsilon) \colon \epsilon \in Y \right \} \subset Y \times Y.
\]
Observe that, for $\alpha\neq 1$, the set $\D_\Pow$ is \textit{not} convex. Consequently, also the corresponding distance function is not convex. However, 
\begin{equation*}
    (\epsilon,\tigma) \in \D_{\Pow} \Longleftrightarrow 
    \epsilon \cdot \tigma 
    = 
    \tfrac{1}{p} \vert \epsilon \vert^p 
    + 
    \tfrac{1}{q} 
    \vert \tigma \vert^q.
\end{equation*}
It turns out that the $\A$-$(p,q)$-quasiconvex hull $\D^{(p,q)}_\Pow$ of $\D_\Pow$ in fact coincides with the data set $\D_\Pow$. In order to verify this, we rely on the following observation (see also \cite{Yan3}).

%---------------------------------
%---------------------------------

\begin{lemma} \label{lem:distpower}
Let $\dist(\cdot,\D)$ be $(p,q)$-coercive. Then 
\[
    \D^{(p,q)} = \bigcap_{\funct \in T_{p,q}} \{\funct(z) \leq 0\},
\]
where $T_{p,q}$ is the set of all continuous functions $\funct \in C(Y \times Y)$ satisfying  \begin{itemize}
    \item $\funct$ is $\A$-quasiconvex;
    \item $\funct(z) \leq 0$ for all $z \in \D$;
    \item $\vert \funct(\epsilon,\tigma) \vert \leq C(1+\vert \epsilon \vert^p + \vert \tigma \vert^q)$.
\end{itemize}
\end{lemma}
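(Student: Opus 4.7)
The plan is to establish the two inclusions separately. For the easier inclusion $\D^{(p,q)}\supseteq \bigcap_{\funct\in T_{p,q}}\{\funct\le 0\}$, I would first verify that $\QA\dist(\cdot,\D)$ itself belongs to $T_{p,q}$: it is $\A$-quasiconvex by Proposition~\ref{prop:FM}\ref{prop:FM2}, it vanishes on $\D$ (since $\dist(\cdot,\D)\ge\QA\dist(\cdot,\D)\ge 0$ and $\dist(z,\D)=0$ for $z\in\D$), and it inherits the $(p,q)$-growth of $\dist(\cdot,\D)$, which in turn is $(p,q)$-bounded upon fixing any reference point in $\D$ and invoking the triangle inequality. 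Consequently, any $z$ lying in every sublevel set $\{\funct\le 0\}$ satisfies $\QA\dist(z,\D)\le 0$; combined with the pointwise lower bound $\QA\dist(\cdot,\D)\ge 0$ (since $\dist\ge 0$) this forces $\QA\dist(z,\D)=0$, i.e. $z\in\D^{(p,q)}$.

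For the reverse inclusion, I would fix $z\in\D^{(p,q)}$ and $\funct\in T_{p,q}$ and produce a sequence of test functions $\psi_n\in\test$ with $\int_{\T_d}\dist(z+\psi_n,\D)\dd y\to 0$, by definition of $\QA\dist(z,\D)=0$. The key step is to show that $(\psi_n^\epsilon,\psi_n^\tigma)$ is uniformly bounded in $L_p(\T_d;Y)\times L_q(\T_d;Y)$. Using $(p,q)$-coercivity of $\dist(\cdot,\D)$,
\begin{equation*}
    \int_{\T_d}\dist(z+\psi_n,\D)\dd y\ge C_1\int_{\T_d}\bigl(\abs{\epsilon+\psi_n^\epsilon}^p+\abs{\tigma+\psi_n^\tigma}^q\bigr)\dd y - C_2 -\gamma\int_{\T_d}(\epsilon+\psi_n^\epsilon)\cdot(\tigma+\psi_n^\tigma)\dd y.
\end{equation*}
The cross term collapses to $\gamma\,\epsilon\cdot\tigma$ because $\int_{\T_d}\psi_n=0$ and, as in Remark~\ref{r:null_lag}, the null-Lagrangian identity $\int_{\T_d}\psi_n^\epsilon\cdot\psi_n^\tigma\dd y=0$ holds: indeed $\A\psi_n=0$ means $\psi_n^\epsilon=\tfrac{1}{2}(\nabla u+\nabla u^T)$ for some $u\in W^1_p(\T_d;\R^d)$ with $\diverg u=0$, and $\psi_n^\tigma=\tilde{\sigma}$ with $-\diverg\tilde{\sigma}+\nabla\tilde{\pi}=0$, so integration by parts gives the identity. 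This yields the desired uniform $L_p\times L_q$ bound on $\psi_n$. I would then use a measurable selection to pick $\tilde z_n\colon\T_d\to\D$ with $\int_{\T_d}\dist(z+\psi_n,\tilde z_n)\dd y\to 0$, whence $\epsilon+\psi_n^\epsilon-\tilde{\epsilon}_n\to 0$ in $L_p$ and $\tigma+\psi_n^\tigma-\tilde{\tigma}_n\to 0$ in $L_q$. Applying the $\A$-quasiconvexity inequality for $\funct$ and the point $z$,
\begin{equation*}
    \funct(z)\le\int_{\T_d}\funct(z+\psi_n)\dd y = \int_{\T_d}\bigl(\funct(z+\psi_n)-\funct(\tilde z_n)\bigr)\dd y+\int_{\T_d}\funct(\tilde z_n)\dd y.
\end{equation*}
The last integral is $\le 0$ because $\funct\le 0$ on $\D$, while the difference vanishes in the limit: the locally Lipschitz estimate from Proposition~\ref{prop:FM}\ref{prop:FM4} (noting that the exponents $\alpha=(p-1)q/p$ and $\beta=(q-1)p/q$ both equal $1$ for conjugate $p,q$) together with H\"older's inequality and the uniform bounds on $\psi_n,\tilde z_n$ provides exactly the correct pairing of $L_q$-weights against the $L_p$- and $L_q$-small differences. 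This gives $\funct(z)\le 0$.

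The main obstacle I anticipate is the derivation of the uniform $L_p\times L_q$ bound on the test sequence $\psi_n$ from the integral distance tending to zero. Without the null-Lagrangian cancellation, the sign-indefinite cross term $\gamma\,\epsilon\cdot\tigma$ in the $(p,q)$-coercivity estimate cannot be controlled; the crucial fact is that the structural constraints built into $\test$ (symmetric-gradient structure paired with divergence-free stress plus a pressure potential, all on the torus) make the coupling $\int_{\T_d}\psi_n^\epsilon\cdot\psi_n^\tigma\dd y$ vanish identically. Once this is in place, the remainder of the argument is an application of Proposition~\ref{prop:FM} and a routine approximation, but this cancellation is what connects the abstract quasiconvex-hull statement to the specific fluid-mechanical differential operator $\A$.
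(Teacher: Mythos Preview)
Your proposal is correct and follows essentially the same route as the paper's proof: both directions use that $\QA\dist(\cdot,\D)\in T_{p,q}$ for the easy inclusion, and for the hard inclusion take a minimising sequence $\psi_n\in\test$, invoke the torus coercivity (Remark~\ref{r:null_lag}) via the null-Lagrangian cancellation $\int_{\T_d}\psi_n^\epsilon\cdot\psi_n^\tigma=0$ to obtain uniform $L_p\times L_q$ bounds, project onto $\D$, and then use the local Lipschitz estimate of Proposition~\ref{prop:FM}\ref{prop:FM4} together with $\A$-quasiconvexity of $\funct$ to conclude $\funct(z)\le 0$. Your write-up is in fact more explicit than the paper's at two points---the null-Lagrangian cancellation and the observation that the cross exponents $\alpha,\beta$ in Proposition~\ref{prop:FM}\ref{prop:FM4} both equal $1$ for conjugate $p,q$---which makes the H\"older pairing transparent.
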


%---------------------------------
%---------------------------------

\begin{proof}
\noindent \textbf{`$\mathbf{\supseteq}$':} Since $\QA \dist(\cdot,\D)$ is contained in $T_{p,q}$, it is clear that  $\bigcap_{\funct \in T_{p,q}} \{\funct(z) \leq 0\}$ is a subset of $\D^{(p,q)}$. 

\noindent\textbf{`$\mathbf{\subseteq}$':}  Suppose now that $(\epsilon_0,\tigma_0) \in \D^{(p,q)}$. Then there exists a sequence $(\epsilon_n,\tigma_n) \in L_p(\T_d;Y) \times  L_q(\T_d;Y)$ with zero average, satisfying the differential constraint such that \begin{equation} \label{eq:distconv}
    \int_{\T_d} \dist\bigl(\bigl(\epsilon_0+\epsilon_n(x),\tigma_0 +\tigma_n(x)\bigr),\D\bigr) \dd x < \frac{1}{n},
    \quad n \in \N.
\end{equation}
Due to the coercivity of the distance function we can bound 
\[
    \Vert \epsilon_n \Vert_{L_p} + \Vert \tigma_n \Vert_{L_q} \leq C( 1 + \vert \epsilon_0 \vert^p + \vert \tigma_0 \vert^q),
    \quad n \in \N.
\]
Take now $\funct \in T_{p,q}$. Then $\funct$ is locally Lipschitz continuous thanks to  Proposition \ref{prop:FM} \ref{loc:Lipschitz}. Define $w_n = (\epsilon'_n,\tigma_n')$ as the projection of $(\epsilon_0+\epsilon_n,\tigma_0+\tigma_n)$ onto $\D$. Then, in view of  \eqref{eq:distconv} we find that,
\[
    \Vert \epsilon_0+\epsilon_n - \epsilon_n' \Vert_{L_p} 
    \longrightarrow 
    0
    \quad \text{and} \quad 
    \Vert \tigma_0+\tigma_n - \tigma_n' \Vert_{L_q}
    \longrightarrow 
    0.
\]
The local Lipschitz continuity of $\funct$ and the boundedness of $(\epsilon_n,\tigma_n)$ now imply 
\begin{equation} \label{eq:aux_6}
    \left \vert \int_{\T_d} \funct(\epsilon_0+\epsilon_n,\tigma_0+\tigma_n) - \funct(\epsilon_n',\tigma_n') \dd x \right \vert 
    \longrightarrow 
    0 
    \quad \text{as } n \to \infty. 
\end{equation}
Using $\A$-quasiconvexity of $\funct$, \eqref{eq:aux_6}, and the non-positivity of $\funct$ this implies 
\[
    \funct(\epsilon_0,\tigma_0)
    \le 
    \liminf_{n\to \infty}\int_{\T_d} \funct(\epsilon_0+\epsilon_n,\tigma_0+\tigma_n) \dd x
    \leq
    \liminf_{n\to \infty}\int_{\T_d} \funct(\epsilon_n',\tigma_n')\dd x 
    \le 0. 
\]
Eventually, we find that $(\epsilon_0,\tigma_0) \in\bigcap_{\funct \in T_{p,q}} \{\funct(z) \leq 0\}$ and the proof is complete. 
\end{proof}

%------------------------------------------
%------------------------------------------

\begin{corollary} \label{cor:powerlaw}
Let $p,q,\alpha$ and $\D_{\Pow}$ be as before. Then \[
    \D_\Pow^{(p,q)} = \D_\Pow.
\]
\end{corollary}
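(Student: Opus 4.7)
The inclusion $\D_\Pow \subseteq \D_\Pow^{(p,q)}$ is immediate, since for any $(\epsilon,\tigma) \in \D_\Pow$ one has $\dist((\epsilon,\tigma),\D_\Pow)=0$, and the $\A$-quasiconvex envelope of a non-negative function is itself non-negative and dominated by the function, forcing $\QA\dist((\epsilon,\tigma),\D_\Pow)=0$. The substantive inclusion is $\D_\Pow^{(p,q)} \subseteq \D_\Pow$. The plan is to apply Lemma \ref{lem:distpower} with a single carefully chosen test function; more precisely, it suffices to exhibit one $\funct \in T_{p,q}$ with $\{\funct \leq 0\} = \D_\Pow$.

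The natural choice is
\begin{equation*}
    \funct(\epsilon,\tigma) = \tfrac{1}{p}|\epsilon|^p + \tfrac{1}{q}|\tigma|^q - \epsilon \cdot \tigma.
\end{equation*}
Young's inequality yields $\funct \geq 0$, with equality precisely when $|\epsilon|^p = |\tigma|^q$ and $\tigma$ is a non-negative scalar multiple of $\epsilon$; a brief check shows that this is exactly the condition $\tigma = |\epsilon|^{\alpha-1}\epsilon$, i.e.\ $(\epsilon,\tigma) \in \D_\Pow$. Hence $\{\funct \leq 0\} = \{\funct = 0\} = \D_\Pow$, and the $(p,q)$-growth bound $|\funct| \leq C(1+|\epsilon|^p+|\tigma|^q)$ is immediate from Young's inequality.

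The only nontrivial step is to verify $\A$-quasiconvexity of $\funct$. The terms $|\epsilon|^p$ and $|\tigma|^q$ are convex and therefore $\A$-quasiconvex. For the bilinear term, I plan to argue that $\epsilon \cdot \tigma$ is $\A$-quasiaffine, i.e.\ both $\pm\, \epsilon \cdot \tigma$ are $\A$-quasiconvex. This reduces to checking, for every test field $\psi=(\psi_1,\psi_2) \in \test$, the identity $\int_{\T_d} \psi_1(x) \cdot \psi_2(x)\,\mathrm{d}x = 0$. This is precisely the null-Lagrangian computation already carried out in Remark \ref{r:null_lag}: writing $\psi_1 = \tfrac{1}{2}(\nabla u + \nabla u^T)$ with $\Div u = 0$ and $\Div \psi_2 = \nabla \pi$ for some $\pi$, integration by parts gives
\begin{equation*}
    \int_{\T_d} \psi_1 \cdot \psi_2 \,\mathrm{d}x
    = \int_{\T_d} \nabla u : (\psi_2 - \pi \id)\,\mathrm{d}x
    = -\int_{\T_d} u \cdot \Div(\psi_2 - \pi \id)\,\mathrm{d}x
    = 0.
\end{equation*}
Summing the three $\A$-quasiconvex contributions yields $\A$-quasiconvexity of $\funct$.

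With $\funct \in T_{p,q}$ in hand, Lemma \ref{lem:distpower} gives $\D_\Pow^{(p,q)} \subseteq \{\funct \leq 0\} = \D_\Pow$, completing the proof. The main potential obstacle — the $\A$-quasiaffineness of the coupling $\epsilon \cdot \tigma$ — is already covered by the null-Lagrangian identity of Remark \ref{r:null_lag}, so no further technical work is required beyond invoking it.
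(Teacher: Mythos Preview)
Your proof is correct and follows exactly the same route as the paper: invoke Lemma~\ref{lem:distpower} with the single separating function $\funct(\epsilon,\tigma)=\tfrac1p|\epsilon|^p+\tfrac1q|\tigma|^q-\epsilon\cdot\tigma$, whose $\A$-quasiconvexity rests on the null-Lagrangian identity of Remark~\ref{r:null_lag}. Your write-up simply spells out the verifications (Young's equality case, the quasiaffineness of $\epsilon\cdot\tigma$) that the paper leaves implicit.
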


%------------------------------------------
%------------------------------------------

\begin{proof}
Lemma \ref{lem:distpower} implies that we only need to find a function $\funct$, which is $\A$-quasiconvex, is non-positive in $(\epsilon,\tigma)$ if and only if $(\epsilon,\tigma) \in  \D_{\Pow}$ and has $(p,q)$-growth. The function \[
\funct(\epsilon,\tigma) := \tfrac{1}{p} \vert \epsilon \vert^p + \tfrac{1}{q} \vert \tigma \vert^q - \epsilon \cdot \tigma \]
exactly satisfies these assertions. Therefore, $\D_{\Pow}^{(p,q)} = \D_{\Pow}$.
\end{proof}

%------------------------------------------
%------------------------------------------
\bigskip

\subsection{Monotone material laws}

Again, consider $1 < p < \infty$, $q=p/(p-1)$ and $\alpha=p/q$. We consider a constitutive law 
\begin{equation}\label{eq:tigma_with_2}
    \tigma( \epsilon) = 2\mu (\vert \epsilon \vert) \epsilon
\end{equation}
for a viscosity $\mu \in C\bigl(\R_+;\R_+\bigr)$. For better readability we omit the factor $2$ in \eqref{eq:tigma_with_2} in the following calculations.
Furthermore, throughout this subsection we assume that the material law $\tigma(\cdot)$ is \emph{monotone}, i.e. for all $\epsilon_1,\epsilon_2 \in Y$ we have
     \[
     (\epsilon_1 - \epsilon_2) \cdot (\tigma(\epsilon_1)-\tigma(\epsilon_2)) \geq 0;
     \]
and we denote $a \coloneqq \lim_{s \to 0} \mu(s) s$. 
The \emph{data set} $\D_{\Mon}$ corresponding to the constitutive law $\epsilon \mapsto \tigma(\epsilon)$ is given as follows (cf. Figure \ref{fig:5:1}):
\begin{equation} \label{def:DMon}
     \D_{\Mon} = \overline{\D}_{\epsilon} \cup  \D_0, \quad \D_{\epsilon} = \bigl \{ (\epsilon,\tigma(\epsilon)) \colon \epsilon \in Y \setminus \{0\} \bigr \}, \quad \D_0 = \bigl\{(0,\tigma) \colon \vert \tigma \vert \leq a \bigr\}.
 \end{equation}
 
 \begin{remark} \begin{enumerate} [label=(\roman*)]
 \item Monotonicity of such a radial-symmetric function $\tigma(\epsilon)$ is equivalent to monotonicity of its one-dimensional counterpart \[
 s \longmapsto \mu(s) s.
 \]
 Therefore, the limit $a= \lim_{s\to 0} \mu(s)s$ is well-defined.
 \item The setting includes the previously discussed cases of Newtonian and power-law fluids, as well as Ellis-law fluids \cite{WS}. Furthermore, it allows the strain-stress graph to have a discontinuity at zero, so-called Herschel-Bulkley fluids, cf. \cite{MRS05}.
 \end{enumerate}
 \end{remark}
%  Suppose that we are given a monotone function $\mu \in C([0,\infty),[0,\infty))$
% that satisfies the growth condition \[
% C_1 \vert a \vert^{\alpha-1} - C_2 < \mu(a) < C_3  \vert a\vert^{\alpha-1} +C_4
% \]
% for all $a \in [0,\infty]$ and for some $C_1,C_2,C_3,C_4>0$
% The material law is given by \[
%         \D_{\Mon} := \left\{ (\epsilon,\tigma) \colon \epsilon \in Y \setminus \{0\},~ \colon \tigma = \mu(\vert \epsilon\vert) \cdot \epsilon \right\} \cup \{(0,0\}
% \]
% Note that a power-law fluid is a special case of this setting. In this setting, $\D_{\Mon}$ is closed as $\mu$ is assumed to be continuous

% \begin{figure}
% \centering
% \begin{tikzpicture}
% \begin{axis}[
% axis lines = middle,
% xlabel = \(\epsilon\),
% ylabel = {\(\sigma\)},
% ticks=none,
% ]
% %%%%%%%%%%%%%%%
% \addplot [
%     domain=-2:0, 
%     samples=100, 
%     ultra thick
% ]
% {-sqrt{-1*x}-0.5};
% \addplot [
%     domain=0:2, 
%     samples=100, 
%     ultra thick
% ]
% {sqrt(x)+0.5};
% % \addplot [
% % domain=-2:2,
% % samples=50,
% % color=blue,
% % ]
% % {2*x^2};
% \end{axis}
% \end{tikzpicture} 
%     \caption{\snote{Tikz/pgfplots stinkt und ist unfassbar kompliziert, wei{\ss} jemand von euch wie das geht?}}
%     \label{figure_mon}
% \end{figure}

\begin{center}
\begin{figure}[h!]
\begin{tikzpicture}[yscale=1.25, xscale=1.25]
\draw[->, very thick] (-4,0)--(4,0) node[right] {$\epsilon$};
\draw[->, very thick] (0,-2.5)-- (0,2.5) node[above] {$\tigma(\epsilon)$};
\draw[-,magenta,ultra thick,] (0,-1)-- (0,1);
% Upper part
\draw[domain=0:3,variable=\x,magenta, ultra thick] plot ({\x},{1+0.13*(\x+0.3) * (\x+0.3) });
% Lower part
\draw[domain=-3:0,variable=\x,magenta, ultra thick] plot ({\x},{-1-0.13*(-\x+0.3) * (-\x+0.3) });
% Horizontal dashed line
\draw[-, dashed, gray, thick] (-4,1.52)--(4,1.52);
% Vertical dashed line
\draw[-, dashed, gray, thick] (1.7,-2.5)--(1.7,2.5); 
% Description of left upper area
\fill[luh-dark-blue!10] (0.02,1.53) rectangle (1.69,2.4);
\fill[luh-dark-blue!10] (-4,1.53) rectangle (-0.02,2.4);
\node at (0.85,2.0) {$\funct_0 \geq 0$};
% plot[domain=0:3*pi] (\x,{0.2*sin(\x r)+1}) -- plot[domain=3*pi:0] (\x,{0.3*cos(\x r)+2});
% Description of right lower area
\fill[luh-dark-blue!10] (1.71,0.02) rectangle (3.89,1.51);
\fill[luh-dark-blue!10] (1.71,-0.02) rectangle (3.89,-2.5);
\node at (2.85,0.75) {$\funct_0 \geq 0$};
% Intersection point
\filldraw[black] (1.7,1.52) circle (1.5pt) node[anchor=north west]{$(\epsilon_0,\tigma_0)$};
\end{tikzpicture}
\caption{A monotone material set $\D_{\Mon}$ and the separating function $\funct_0$ for a given $(\epsilon_0,\tigma_0) \in \D_{\Mon}$.} \label{fig:5:1}
\end{figure}
\end{center}

\begin{theorem}
Let $p,q,\alpha$ and $\D_{\Mon}$ be as above. Then we have \[
\D_{\Mon}^{(p,q)} = \D_{\Mon}.
\]
\end{theorem}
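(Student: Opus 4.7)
The inclusion $\D_{\Mon}\subseteq \D_{\Mon}^{(p,q)}$ is immediate, since $\dist((\epsilon,\tigma),\D_{\Mon})$ vanishes on $\D_{\Mon}$ and so does its quasiconvex envelope. For the reverse inclusion I apply Lemma~\ref{lem:distpower}: since $\dist(\,\cdot\,,\D_{\Mon})$ is $(p,q)$-coercive (the data set sits near the graph of a monotone law), it suffices to exhibit, for every $(\epsilon_0,\tigma_0)\notin \D_{\Mon}$, a function $\funct\in T_{p,q}$ with $\funct(\epsilon_0,\tigma_0)>0$.

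The candidate separating functions are the parametrised family
\[
\funct_{(\epsilon^*,\tigma^*)}(\epsilon,\tigma)=-(\epsilon-\epsilon^*)\cdot(\tigma-\tigma^*),\qquad (\epsilon^*,\tigma^*)\in\D_{\Mon}.
\]
Admissibility of $\funct_{(\epsilon^*,\tigma^*)}$ in $T_{p,q}$ decomposes into three routine checks. Only the bilinear term $-\epsilon\cdot\tigma$ is non-affine, and the null-Lagrangian calculation from Remark~\ref{r:null_lag}---applied to any $\psi=(\psi_1,\psi_2)\in\test$, using $\psi_1=\tfrac12(\nabla u+\nabla u^T)$ with $\diverg u=0$ together with $\diverg\psi_2=\nabla\pi$ and two integrations by parts on $\T_d$---yields $\int_{\T_d}\psi_1\cdot\psi_2\dd x=0$, so $\epsilon\cdot\tigma$ is $\A$-quasiaffine and hence so is $\funct_{(\epsilon^*,\tigma^*)}$. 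Young's inequality delivers the $(p,q)$-growth bound, and the inequality $\funct_{(\epsilon^*,\tigma^*)}\leq 0$ on $\D_{\Mon}$ follows from the monotonicity of $\epsilon\mapsto\mu(|\epsilon|)\epsilon$, extended to pairs involving $\D_0$ via the inequality $\mu(s)s\geq a$ for all $s>0$ (a one-dimensional consequence of monotonicity obtained by testing at collinear strains).

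The genuinely delicate step is the choice of $(\epsilon^*,\tigma^*)\in\D_{\Mon}$ for each excluded $(\epsilon_0,\tigma_0)$. The complement of $\D_{\Mon}$ splits into two cases. If $\epsilon_0=0$ and $|\tigma_0|>a$, set $\epsilon^*=s\,\tigma_0/|\tigma_0|$ and $\tigma^*=\mu(s)s\,\tigma_0/|\tigma_0|$, where $s>0$ is chosen small enough that $\mu(s)s<|\tigma_0|$ (possible since $\mu(s)s\to a$ as $s\to 0^+$); then $(\epsilon_0-\epsilon^*)\cdot(\tigma_0-\tigma^*)=-s\,(|\tigma_0|-\mu(s)s)<0$. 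If $\epsilon_0\neq 0$ and $\tigma_0\neq \tau:=\mu(|\epsilon_0|)\epsilon_0$, set $\epsilon^*=\epsilon_0+\lambda(\tigma_0-\tau)$ and $\tigma^*=\mu(|\epsilon^*|)\epsilon^*$ for small $\lambda>0$; since $\epsilon^*\neq 0$, continuity of $\epsilon\mapsto\mu(|\epsilon|)\epsilon$ away from the origin gives $\tigma^*=\tau+O(\lambda)$, whence
\[
(\epsilon_0-\epsilon^*)\cdot(\tigma_0-\tigma^*)=-\lambda\,|\tigma_0-\tau|^2+O(\lambda^2)<0
\]
for $\lambda$ sufficiently small. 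In both cases $(\epsilon^*,\tigma^*)\in\D_\epsilon\subseteq\D_{\Mon}$.

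The principal difficulty in executing this plan is precisely the case analysis above, as the jump of the constitutive graph at the origin forces separate arguments for the ``vertical'' defect at $\epsilon_0=0$ and for the generic non-collinear defect at $\epsilon_0\neq 0$. Once the separating family has been constructed, Lemma~\ref{lem:distpower} yields $\D_{\Mon}^{(p,q)}\subseteq\D_{\Mon}$ and the proof is complete.
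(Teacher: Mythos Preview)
Your proof is correct and follows essentially the same strategy as the paper: use Lemma~\ref{lem:distpower} together with the $\A$-quasiaffine separating family $\funct_{(\epsilon^*,\tigma^*)}(\epsilon,\tigma)=-(\epsilon-\epsilon^*)\cdot(\tigma-\tigma^*)$ indexed by $(\epsilon^*,\tigma^*)\in\D_{\Mon}$. The only differences are cosmetic---you perturb with $\lambda>0$ rather than $t<0$, and in the case $\epsilon_0=0$ you choose $\epsilon^*$ along $\tigma_0/|\tigma_0|$ (which actually lies in $Y$, unlike the paper's choice $e_{11}$); one minor point is that your ``$O(\lambda^2)$'' should strictly be ``$o(\lambda)$'' since $\mu$ is only assumed continuous, but the conclusion $-\lambda(\tigma_0-\tau)\cdot(\tigma_0-\tigma^*)<0$ for small $\lambda$ is unaffected.
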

 \begin{proof}
 As for the proof of Corollary \ref{cor:powerlaw} for the power-law case, it suffices to find $\A$-quasiconvex separating functions (Lemma \ref{lem:distpower}). For $(\epsilon_0,\tigma_0) \in \D_{\Mon}$ we define the function (cf. Figure \ref{fig:5:1}).
\[
 \funct_0(\epsilon,\tigma)= -(\epsilon-\epsilon_0)\cdot(\tigma-\tigma_0).
\] 
This function is $\A$-quasiconvex (even $\A$-quasiaffine, i.e. $\funct$ and $-\funct$ are $\A$-quasiconvex) and has $(p,q)$-growth, as \[
\vert \funct_0(\epsilon,\tigma) \vert\leq \tfrac{1}{p} \vert \epsilon-\epsilon_0 \vert^p + \tfrac{1}{q} \vert \tigma-\tigma_0 \vert^q.
\]
To conclude that $\D_{\Mon}^{(p,q)} = \D_{\Mon}$ we still need to show that \begin{enumerate}[label=(\roman*)]
    \item \label{lastthm:proof1} $\funct_0$ is non-positive on $\D_{\Mon}$;
    \item \label{lastthm:proof2} for all $(\epsilon,\tigma) \notin \D_{\Mon}$ there is $(\epsilon_0,\tigma_0) \in \D_{\Mon}$, such that $\funct_0(\epsilon,\tigma) >0$.
\end{enumerate}
\textbf{\ref{lastthm:proof1}:} Take $(\varepsilon,\tigma) \in \D$. Suppose that $\vert \varepsilon \vert \geq \vert \varepsilon_0 \vert$ (the other case is rather similar). Then \begin{align*}
    -\funct_0(\epsilon,\tigma) &= (\epsilon-\epsilon_0) \cdot (\tigma - \tigma_0) \\
    &=(\epsilon-\epsilon_0) \cdot \left(\mu(\vert \epsilon \vert) \epsilon - \mu(\vert \epsilon_0 \vert) \epsilon_0 \right)
    \\
    &= \mu(\vert \epsilon_0 \vert) (\epsilon -\epsilon_0) \cdot (\epsilon -\epsilon_0) + (\epsilon -\epsilon_0) \cdot \left(\bigl(\mu(\vert \epsilon_0 \vert) - \mu(\vert \epsilon_0 \vert)\bigr) \epsilon\right)\\
    &\geq 0 + \bigl(\mu(\vert \epsilon_0 \vert) - \mu(\vert \epsilon_0 \vert)\bigr) \bigl(\vert \epsilon \vert^2 - \vert \epsilon \vert \vert \epsilon_0 \vert\bigr)  \geq 0
\end{align*}
\textbf{\ref{lastthm:proof2}:} Suppose that $(\epsilon,\tigma) \notin \D_{\Mon}$. If $\epsilon \neq 0$, this means that $\tigma \neq \mu(|\varepsilon|) \varepsilon$. In that case, consider \[
\epsilon_t = \epsilon +t(\tigma - \mu(|\epsilon|)\epsilon) 
\]
and $\tigma_t= \mu(|\epsilon_t|) \epsilon_t$. If $\varepsilon=0$, simply take $\epsilon_t= t e_{11}$. For now, take $\epsilon \neq 0$, the other case is quite similar. Then for $t<0$ small enough
\[
-\funct_t(\epsilon,\tigma) =(\epsilon-\epsilon_0) \cdot (\tigma - \tigma_t) 
= t (\tigma - \mu(\vert \epsilon \vert)\epsilon) \cdot (\tigma - \mu(|\epsilon_t|) \epsilon_t)
 <0
\]
as the map \[
t \mapsto  (\tigma - \mu(|\epsilon_t|) \epsilon_t)
\]
is continuous. Hence, there is $t<0$, such that 
\[
 (\tigma - \mu(\vert \epsilon \vert)\epsilon) \cdot (\tigma - \mu(|\epsilon_t|) \epsilon_t) >0.
\]
To summarise, there is a function $\funct_t \in T_{p,q}$, such that $\funct_t(\epsilon,\tigma)>0$, whenever $(\epsilon,\tigma) \notin \D_{\Mon}$.
\end{proof}
\begin{remark}
Starting from the constitutive law $\epsilon \mapsto \tigma_c(\varepsilon)$, there are two choices for $\D_{\Mon}$. We may define $\D_{\Mon}$ as in \eqref{def:DMon} or only take the set $\overline{\D}_{\varepsilon}$ introduced in \eqref{def:DMon}. For the $\A$-quasiconvex hull this does not make a difference, i.e. 
\begin{equation} \label{Dmonalt}
    \overline{\D}_\varepsilon^{(p,q)} = \D_{\Mon}^{(p,q)}= \D_{\Mon}.
\end{equation}
Indeed, \eqref{Dmonalt} can be verified by calculating the \emph{$\Lambda_{\A}$-convex hull} of the set $\overline{\D}_{\varepsilon}$ (that is, we successively take convex combinations along $\Lambda_{\A}$). The $\Lambda_{\A}$-convex hull is a subset of the $\A$-quasiconvex hull. Therefore, it suffices to show that the $\Lambda_{\A}$-convex hull of $\overline{\D}_{\varepsilon}$ contains $\D_{\Mon}$. This in turn follows from the fact that \[
\ker \A_2[\xi] = \{\tigma \in Y \colon \tigma  \xi =0 \} + \R (\xi \otimes \xi) \quad \Longrightarrow  \quad \Lambda_{\A_2} =Y.
\]
Using this observation, the $\Lambda_{\A}$-convex hull of $\{(0,\tigma) \colon \vert \tigma \vert = a\} \subset \overline{\D}_{\varepsilon}$ is the convex hull $\D_0$. Consequently, the $\Lambda_{\A}$-convex hull and therefore also the $\A$-quasiconvex hull of $\overline{\D}_{\varepsilon}$ contain $\D_{\Mon}$.
\end{remark}

\bigskip

\bibliographystyle{alpha}
\bibliography{DDFD} 
\end{document}